\numberwithin{equation}{section}
\newcommand{\inpt}[1]{\langle #1 \rangle}
\theoremstyle{plain}
\newtheorem{theorem}{Theorem}[section]
\newtheorem{lemma}[theorem]{Lemma}
\theoremstyle{definition}
\newtheorem{definition}[theorem]{Definition}
\theoremstyle{remark}
\newcommand{\bl}{\label}
\newcommand{\om}{\omega}
\newcommand{\gw}{\Omega}
\newcommand{\ztw}{\theta_t \omega}
\newcommand{\ctw}{\theta_{-t} \omega}
\newcommand{\mbR}{\mathbb{R}^n}
\newcommand{\mbr}{\mathbb{R}}
\newcommand{\tup}{\textup}
\newcommand{\vp}{\varphi}
\newcommand{\gz}{\theta}
\newcommand{\gk}{\kappa}
\newcommand{\msd}{\mathscr{D}}
\newcommand{\msa}{\mathcal{A}}
\newcommand{\intr}{\int_{\mbR}}
\newcommand{\rxr}{\rho \left(\frac{|x|^2}{r^2}\right)}
\theoremstyle{definition}
\theoremstyle{remark}
\theoremstyle{definition}
\theoremstyle{remark}
\numberwithin{equation}{section}
\begin{document}

\title[Stochastic Wave Equation with Arbitrary Exponents]{\Small{Random Attractor for Stochastic Wave Equation \\ with
Arbitrary Exponent and Additive Noise on} $\mathbb{R}^n$}


\author{Hongyan Li}
\address{College of Management, Shanghai University of Engineering
Science, Shanghai 201620, China}
\curraddr{}
\email{hongyanlishu@163.com}
\thanks{The first author is supported by the National Natural Science Foundation of China (11101265) and
Shanghai Education Research and Innovation Key Project (14ZZ157)}

\author{Yuncheng You}
\address{Department of Mathematics and Statistics, University of
South Florida, Tampa, FL 33620, USA}
\curraddr{}
\email{you@mail.usf.edu}
\thanks{}

\subjclass[2010]{primary 35B40, 35B41, 35R60, 37L55; secondary 60H15}

\keywords{stochastic wave equations, 
random attractor, pullback asymptotic compactness, additive  noise, arbitrary nonlinear exponents, unbounded domain.}

\date{November 22, 2014}

\dedicatory{}

\begin{abstract}
Asymptotic random dynamics of weak solutions for a damped stochastic wave equation with the nonlinearity of arbitrarily large exponent and the additive noise on $\mathbb{R}^n$ is investigated. The existence of a pullback random attractor is proved in a parameter region with a breakthrough in proving the pullback asymptotic compactness of the cocycle with the quasi-trajectories defined on the integrable function space of arbitrary exponent and on the unbounded domain of arbitrary dimension.
\end{abstract}

\maketitle

\section{\large{\bf{Introduction}}}

In this paper, we study the asymptotic behavior of solutions of a damped stochastic wave equation with nonlinearity of arbitrarily large exponent and additive noise defined on the entire Euclidean space $\mathbb{R}^n$ of arbitry dimension,
\begin{eqnarray}\label{1}
    u_{tt} -\Delta u + \beta u_t +f(x, u)+\alpha u = g(x) +\varepsilon \, \sum^m_{j=1}h_j (x) \, \frac{dW_j}{dt},
\end{eqnarray}
for $t \geq \tau$, with the initial condition
\begin{eqnarray}\label{2}
    u(x, \tau)=u_{0}(x),   \ \ \ \ u_t(x, \tau)=u_{1}(x),
\end{eqnarray}
 where $\alpha, \beta$ and $\varepsilon$ are positive constants, $g$ and $h_j \, (j=1, 2,
 \cdots, m)$ are given functions defined on $\mathbb{R}^n$, $\beta u_t$ is a damping term,  $f(x, u)$ is an interaction function satisfying some
 dissipative  conditions, and $\{W_j\}^m_{j=1}$ are independent, two-sided, real-valued Wiener processes on a probability space which will be specified later.

The asymptotic dynamics of solutions for nonlinear wave equations and nonlinear hyperbolic evolutionary equations with linear or nonlinear or localized damping terms have been studied in last three decades by many authors, e.g. \cite{ACH}-\cite{Ba},
\cite{Caraballo}-\cite{ChepyVishik}, \cite{fei}-\cite{H91},
\cite{kha}-\cite{LiZhou}, \cite{MN, sellyou}, \cite{sun}-\cite{te},
\cite{WaZh, You}. The established results naturally focus on the existence of global attractors by showing the absorbing property and the asymptotic compactness of the solution semigroups for autonomous system \cite{Ba, GT, te, You} or the skew-product flow for nonautonomous system \cite{ChepyVishik02, ChepyVishik, LiZhou}.

For stochastic wave equations, the solution mapping defines a random
dynamical system or called a cocycle, which is defined on a state
space with a parametric base space. Pullback random attractor (which
will simply be called random attractor) is the appropriate object
for describing asymptotic random dynamics of such a random dynamical
systems \cite{BD}-\cite{BaLuWa}, \cite{Chow}-\cite{Fan06},
\cite{Fan, JonesWang13, Lv, Shen, WangZhouGu}. The topic of random
attractors for stochastic wave equations has been studied by several
authors \cite{Chow, Crauel, Fanm, Fan, Lv, Shen, Wang, WangZhouGu,
Yangduan}. In regard to stochastic nonlinear wave equations driven
by additive noise, the existence of the random attractor has been
established for bounded domains \cite{Crauel, Fan, Lv, Shen,
Yangduan} and with the critical exponents on the unbounded domain
$\mathbb{R}^3$ in \cite{Wang11}. However, the problem of random
attractors is open for the stochastic wave equations with
\emph{nonlinearity of arbitrarily large exponents} and \emph{on the
unbounded domain} $\mathbb{R}^n$ with arbitrary dimension $n$. This
is the topic as well as the main contribution in this work.

In case of high-order nonlinearity and high dimensional unbounded domain, the issue of pullback asymptotic compactness for a random dynamical system becomes much more difficult to handle due to not only the lack of compactness of the Sobolev embeddings but also the necessarily involved high-order integrable function spaces, in addition to the cumbersome effect caused by the additive noise. In this work we shall resolve this challenging problem and accomplish the proof of random attractor by means of

    1) the uniform estimates for absorbing property and norm-smallness of solutions

    \hspace{8pt} outside a large ball;

    2) the grouping esimates of the extended energy functional for the compactness

    \hspace{8pt} in the space $H^1 (\mbR) \times L^2 (\mbR)$;

    3) the convergence criterion of Vitali type for the function space $L^p (\mathbb{R}^n)$ with an

    \hspace{8pt} arbitrary exponent and no limitation on the space dimension $n$,

\noindent  to circumvent the crucial difficulties especially related to the phase space $(H^1 (\mbR)\cap L^p (\mbR)) \times L^2 (\mbR)$.


In Section 2, we recall basic concepts and results related to random attractors and random dynamical systems. We shall transform the stochastic wave equation to a pathwise random wave equation through  Ornstein-Uhlenbeck processes and define the associated cocycle. In Section 3, we shall conduct uniform estimates of solutions for the absorbing sets and the tail part. In Section 4, we shall establish the intricate asymptotic compactness of the cocycle with respect to the intersection of $L^p (\mathbb{R}^n)$ and the Hilbert energy space. In Section 5, the existence of a random attractor for this stochastic wave equation with unlimited growth rate and additive noise on the unbounded domain with unlimited dimension is finally proved.

In this paper, we shall use $\|\cdot\|$ and $\inpt{\cdot, \cdot}$ to denote the norm and inner product of $L^2(\mathbb{R}^n),$ respectively. The norm of $L^r(\mathbb{R}^n)$ or a Banach space $X$ will be denoted by $\|\cdot\|_r$ or $\|\cdot\|_X$. We use $c, C$ or $C_i$ to denote generic or specific positive constants.

\section{\large{\bf{Preliminaries and the Random Dynamical System}}}

Let $(\Omega,{\mathcal{F}}, P)$ be a probability space and $(X, \|\cdot\|_X)$ be a real 
Banach space whose Borel $\sigma$-algebra is denoted by ${\mathscr{B}}(X)$.

\begin{definition}  \label{D1}
Let a mapping $\theta_{t}: \mathbb{R}\times \Omega\rightarrow \Omega$ be $({\mathscr{B}}(\mathbb{R})\times {\mathcal{F}}, {\mathcal{F}})$-measurable such that
 $\theta_{0}$ is the identity on $\Omega$, $\theta_{t+s}=\theta_{t}\circ\theta_{s} $ for all $t, s\in \mathbb{R}, \omega \in \Omega$, and $P\theta_{t} = P$ for all $t\in \mathbb{R}$. Then $(\gw, \mathcal{F}, P, \{\theta_t\}_{t\in \mbr})$ is called a \emph{parametric dynamical system} (briefy PDS).
\end{definition}

\begin{definition}  \bl{D2}
A mapping $\Phi: \mathbb{R}^+\times \Omega \times X\rightarrow X$ is called a \emph{random dynamical system} on $X$ over $(\Omega,{\mathcal{F}}, P, \{\theta_{t}\}_{t\in \mathbb{R}})$, if $\Phi $ is $({\mathscr{B}}(\mathbb{R}^+)\times {\mathcal{F}}\times {\mathscr{B}}(X), {\mathscr{B}}(X))$-measurable and for all $\omega\in \Omega$ and $t, s\in \mathbb{R}^+$ the following conditions are satisfied:

(i) $\Phi(0, \omega, \cdot)$ is the identity on $X$.

(ii) $\Phi(t+s, \omega, \cdot)=\Phi(t, \theta_{s}\omega, \Phi(s, \omega, \cdot))$.

(iii) $\Phi(t, \omega, \cdot): X\rightarrow X$ is strongly continuous.

\noindent Such a random dynamical system can be denoted by $(\Phi, \theta)$ or simply by $\Phi$.
\end{definition}

\begin{definition} \bl{D3}
    A nonempty mapping $D(\om): \gw \to 2^X$ is called a \emph{random set} in $X$, if the mapping $\om \longmapsto dist_X (x,D(\om))$ is measurable with respect to $\mathcal{F}$ for any given $x \in X$.

    1) A \emph{bounded} random set $B(\om) \subset X$ means that there is a random variable $r (\om) \geq 0$ such that $\|B(\om)\| =\sup_{x \in B(\om)} \|x\| \leq r(\om),\, \om \in \gw$.

    2) A random set $S(\om) \subset X$ is called \emph{compact} (reps. \emph{precompact}) if for all $\om \in \gw$ the set $S(\om)$ is a compact (reps. precompact) set in $X$.

    3) If a bounded random set $B(\om)$ satisfies the condition that, for any constant $\gk > 0$,
$$
    \lim_{t \to \infty} e^{-\gk t} \|B(\gz_{-t} \om)\| = 0, \quad \om \in \gw,
$$
then it is called \emph{tempered} with respect to $\{\gz_t\}_{t\in \mathbb{R}}$. A random variable $R: (\gw, \mathcal{F}, P) \to (0, \infty)$ is called \emph{tempered} if
$$
    \lim_{t \to - \infty} \frac{1}{t}\, \log^+ R(\theta_t \om) = 0,  \quad \om \in \gw.
$$
\end{definition}

We shall denote by $\msd_X$ or simply $\msd$ the family of tempered random subsets of $X$, which is inclusion-closed and called a \emph{universe}.

\begin{definition} \bl{D4}
Let $\Phi$ be a random dynamical system on $X$ over the parametric dynamical system $(\Omega,{\mathcal{F}}, P, \{\theta_{t}\}_{t\in \mathbb{R}})$. Let $\msd$ be a given universe of tempered random subsets of $X$.

1) A random set $K = \{K( \omega)\}_{\omega \in \Omega}\in \msd$ is a $\msd$-\emph{pullback absorbing set} for $\Phi$ with respect to $\msd$, if for any $B \in \msd$ there exists $t_B (\om) \geq 0$ such that
    $$
        \Phi (t, \gz_{-t} \om, B(\gz_{-t} \om)) \subset K (\om), \; \;\; \tup{for all} \;\, t \geq t_B (\om), \;\, \om \in \gw.
    $$

2) $\Phi$ is called $\msd$-\emph{pullback asymptotically compact} if for any $\om\in\gw$,
$$
    \{\Phi (t_m, \, \gz_{-t_m} \om,\, x_{m})\}_{m=1}^\infty \; \, \tup{has a convergent subsequence in} \; X,
$$
whenever $t_m \to \infty$ and $x_m \in B(\gz_{-t_m} \om)$ for any given $B \in \msd$.
\end{definition}

\begin{definition} \bl{D5}
    A random set $\msa = \{\msa (\om)\}_{\om\in\gw} \in \msd$ is called a \emph{random attractor} for a random dynamical system $(\Phi, \theta)$ with the attraction basin $\msd$, if the following conditions are satisfied:

    (i) $\msa$ is a compact random set.

    (ii) $\msa$ is invariant, $\Phi (t, \om, \msa (\om)) = \msa (\gz_t \om)$, for all  $t \geq 0$ and $\om \in \gw$.

    (iii) $\msa$ pullback attracts every set $B \in \msd$ in the sense
$$
    \lim_{t \to \infty} dist_X (\vp (t, \gz_{-t} \om, B(\gz_{-t} \om)), \msa (\om)) = 0, \quad \om \in \gw,
$$
where $dist_X (\cdot, \cdot)$ is the Hausdorff semi-distance with respect to the $X$-norm.
\end{definition}

The following result on the existence of random attractor for a random dynamical system has been established in \cite{BaLuWa, Crauel, Wang}.

\begin{theorem}  \label{T1}
Let ${\msd}$ be a universe of nonempty tempered random subsets of a Banach space $X$ and let $\Phi$ be a random
dynamical system on $X$ over $(\Omega, {\mathcal{F}}, P, \{\theta_{t}\}_{t\in \mathbb{R}})$.
Suppose that $K = \{K(\omega)\}_{\omega \in \Omega}$ is a closed $\msd$-pullback absorbing set for $\Phi$ with respect to ${\msd}$ and $\Phi$ is
${\msd}$-pullback asymptotically compact in $X$. Then $\Phi$ has a unique random attractor
$\mathcal{A} = \{\mathcal{A}(\omega)\}_{\omega \in \Omega}$ in $X$ with the attraction basin $\msd$, which is given by
\begin{eqnarray} \label{21}
{\mathcal{A}}(\omega)=\bigcap_{\tau \geq 0}\, \overline{\bigcup_{t\geq \tau}\Phi(t, \theta_{-t}\omega, K( \theta_{-t}\omega))}, \quad \omega \in \Omega.
\end{eqnarray}
\end{theorem}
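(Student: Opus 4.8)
The plan is to identify the set $\msa(\om)$ defined by \eqref{21} with the pullback limit set of the closed absorbing set $K$, and then to verify the three properties in Definition~\ref{D5} together with uniqueness. The main tool, which I would establish first, is the characterization that $\xi \in \msa(\om)$ if and only if there are sequences $t_m \to \infty$ and $x_m \in K(\gz_{-t_m}\om)$ with $\Phi(t_m, \gz_{-t_m}\om, x_m) \to \xi$ in $X$: one implication is immediate from the meaning of the closure and of the intersection over $\tau$ in \eqref{21}, and for the converse, for each $m$ one selects a point of $\bigcup_{t \geq m}\Phi(t, \gz_{-t}\om, K(\gz_{-t}\om))$ lying within distance $1/m$ of $\xi$. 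I would also record at the outset that, since $K$ is closed and $\msd$-pullback absorbing, $\Phi(t, \gz_{-t}\om, K(\gz_{-t}\om)) \subseteq K(\om)$ for all $t \geq t_K(\om)$, so that $\msa(\om) \subseteq K(\om)$ for every $\om \in \gw$.

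Nonemptiness and compactness come next. Taking $B = K \in \msd$, $t_m = m$, and any $x_m \in K(\gz_{-m}\om)$, the $\msd$-pullback asymptotic compactness produces a convergent subsequence whose limit lies in $\msa(\om)$ by the characterization, so $\msa(\om) \neq \emptyset$. Being an intersection of closed sets, $\msa(\om)$ is closed, so its compactness reduces to precompactness: given any $\{\xi_j\} \subseteq \msa(\om)$, I would choose $t_j \geq j$ and $x_j \in K(\gz_{-t_j}\om)$ with $\|\Phi(t_j, \gz_{-t_j}\om, x_j) - \xi_j\| < 1/j$ and apply pullback asymptotic compactness (again with $B = K$) to extract a convergent subsequence of $\{\Phi(t_j, \gz_{-t_j}\om, x_j)\}$, hence of $\{\xi_j\}$.

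The attraction property I would prove by contradiction: if some $B \in \msd$ fails to be pullback attracted at some $\om$, there would be $\delta > 0$, $t_m \to \infty$, and $x_m \in B(\gz_{-t_m}\om)$ with $dist_X(\Phi(t_m, \gz_{-t_m}\om, x_m), \msa(\om)) \geq \delta$, while pullback asymptotic compactness supplies a subsequence $\Phi(t_{m_k}, \gz_{-t_{m_k}}\om, x_{m_k}) \to \xi$. Fixing $\sigma \geq 0$, the cocycle identity gives, for $k$ large, $\Phi(t_{m_k}, \gz_{-t_{m_k}}\om, x_{m_k}) = \Phi(\sigma, \gz_{-\sigma}\om, \Phi(t_{m_k} - \sigma, \gz_{-t_{m_k}}\om, x_{m_k}))$, and the absorbing property forces the inner factor into $K(\gz_{-\sigma}\om)$, so the tail of the subsequence lies in $\bigcup_{t \geq \sigma}\Phi(t, \gz_{-t}\om, K(\gz_{-t}\om))$; hence $\xi$ belongs to its closure, and since $\sigma$ is arbitrary, $\xi \in \msa(\om)$, contradicting the distance bound. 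For the invariance $\Phi(t, \om, \msa(\om)) = \msa(\gz_t\om)$ I would prove two inclusions, each using the cocycle property and the strong continuity of $\Phi(t, \om, \cdot)$. If $\xi = \lim_m \Phi(t_m, \gz_{-t_m}\om, x_m) \in \msa(\om)$, then $\Phi(t, \om, \xi) = \lim_m \Phi(t + t_m, \gz_{-(t+t_m)}\gz_t\om, x_m)$ with $x_m \in K(\gz_{-(t+t_m)}\gz_t\om)$, which lies in $\msa(\gz_t\om)$; conversely, if $\zeta = \lim_m \Phi(s_m, \gz_{-s_m}\gz_t\om, y_m) \in \msa(\gz_t\om)$, I would rewrite it, for $s_m > t$, as $\Phi(t, \om, z_m)$ with $z_m = \Phi(s_m - t, \gz_{-(s_m-t)}\om, y_m)$ and $y_m \in K(\gz_{-(s_m-t)}\om)$, use pullback asymptotic compactness to obtain $z_{m_k} \to z \in \msa(\om)$, and pass to the limit by continuity to get $\zeta = \Phi(t, \om, z)$.

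Finally, $\msa \in \msd$ because $\msa(\om) \subseteq K(\om)$ with $K$ tempered and $\msd$ is inclusion-closed, once we know $\msa$ is a measurable random set; the latter follows from the explicit formula \eqref{21} and the measurability of $\Phi$, using the separability of $X$. Uniqueness is then standard: if $\msa'$ is another random attractor with basin $\msd$, then $\msa' \in \msd$ is pullback attracted by $\msa$, and by the invariance of $\msa'$ this forces $dist_X(\msa'(\om), \msa(\om)) = 0$ for every $\om$; interchanging the roles of $\msa$ and $\msa'$ gives $\msa = \msa'$. The step I expect to require the most care is the cocycle bookkeeping in the attraction and invariance arguments --- keeping precise track of which $\gz$-shift acts on the base point at each stage --- while the only genuinely delicate technical point is the measurability of $\msa$, which is the reason the abstract statement is customarily quoted from \cite{BaLuWa, Crauel, Wang} rather than reproved.
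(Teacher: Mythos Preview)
Your proof sketch is correct and follows the standard route found in the references the paper cites. However, the paper itself does not prove Theorem~\ref{T1} at all: it is stated as a known result, preceded by the sentence ``The following result on the existence of random attractor for a random dynamical system has been established in \cite{BaLuWa, Crauel, Wang},'' and no proof is given. So there is nothing to compare against in the paper proper; what you have written is essentially a compressed version of the proofs in those references, and you yourself note in your final sentence that this is why the statement is customarily quoted rather than reproved.
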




Now we formulate the problem \eqref{1}-\eqref{2}. Let $\xi=u_t+\delta u$, where $\delta$ is a
positive number to be determined. Then (\ref{1})-(\ref{2}) can be rewritten as
\begin{equation} \label{22} 
\begin{split}
    &u_t+\delta u=\xi,   \\
    \xi_{t}-\delta\xi+\delta^2 u + \alpha u -\Delta u &+\beta (\xi-\delta u)+f(x, u)=g(x) +\varepsilon\sum^m_{j=1}h_j \frac{dW_j}{dt},  \\
    u(x, \tau) =u_{0} (x), \quad   &\xi(x, \tau)=\xi_{0}(x) =u_{1}(x)+\delta u_{0}(x).
\end{split} 
\end{equation}
The autonomous wave equation driven by the stochastic perturbation has the nonautonomos nature as the parametric stochastic processes $W_j(t)$ evolve. Hence the initial condition can be given at any initial time $\tau \in \mathbb{R}$.

\textbf{Assumption I} \;
Assume that $g \in H^1(\mathbb{R}^n)$ and for each $j=1, 2, \cdots, m,$ the function $h_j\in
H^2(\mathbb{R}^n)\cap W^{2,p}(\mathbb{R}^n)$, where $p > 2$ is arbitrarily given.

\textbf{Assumption II}\;
Assume that the nonlinear term $f\in C^1(\mathbb{R}^n\times
\mathbb{R}, \mathbb{R})$ and its antidrivative $F(x, u)=\int^u_0 f(x, s)ds$ satisfy the
following conditions:
\begin{align}
    &|f(x, u)|\leq C_1 |u|^{p -1}+\phi_1(x), \quad \phi_1(x)\in H^1(\mathbb{R}^n), \label{33} \\[3pt]
    &f(x, u)u - C_2F(x, u)\geq \phi_2(x), \quad \phi_2(x)\in L^1(\mathbb{R}^n),  \label{34} \\[3pt]
    &F(x, u)\geq C_3 |u|^{p}-\phi_3(x), \quad \phi_3(x)\in L^1(\mathbb{R}^n),  \label{35}
\end{align}
where $C_1, C_2$ and $C_3$ are positive constants and the arbitrarily given $p > 2$ is the same as in Assumption I. 

Assume that $\{W_j\}^m_{j=1}$ are independent, two-sided, real-valued Wiener processes on the canonical probability space $(\Omega, \mathcal{F}, P)$, where
$$
    \Omega = \{\omega=(\omega_1, \omega_2, \cdots, \omega_m)\in C(\mathbb{R}, \mathbb{R}^m): \omega(0)=0\},
$$
$\mathcal{F}$ is the Borel $\sigma$-algebra induced by the compact-open topology of $\Omega$ and $P$ is the corresponding Wiener measure on
$(\Omega, \mathcal{F})$. We will identify $\omega$ with a sample path
$$
    \omega (t) = (\omega_1 (t), \omega_2 (t), \cdots, \omega_m (t)), \quad \ t \in \mathbb{R}.
$$
Define the time shift operartor $\theta_t$ by
$$
    (\theta_t\omega) (\cdot)=\omega(\cdot + t)-\omega(t), \quad \omega\in \Omega, \; t\in \mathbb{R}.
$$
Then $(\Omega, \mathcal{F}, P, (\theta_t)_{t\in \mathbb{R}})$ is a parametric dynamical system.

To define a random dynamical system for the problem formulated by (\ref{22}), we now convert the
stochastic wave equation with the additive noise to a deterministic equation with a time-dependent random parameter $\theta_t \omega$ and random initial data. Given $j=1, 2, \cdots, m,$  we introduce one-dimensional Ornstein-Uhlenbeck equation
\begin{eqnarray}\label{36}
dz_j+\delta z_jdt=dW_j(t), \quad t \in \mathbb{R}.
\end{eqnarray}
Here $\delta$ is the same constant as in \eqref{22}. A solution to (\ref{36}) is given by
\begin{equation*}
    \begin{split}
    z_j (t, \omega_j) &= \int_{-\infty}^t e^{-\delta (t-s)}\,dW_j (s, \om) = -\delta\int^0_{-\infty}e^{\delta s} (\theta_t\omega_j)(s)\,ds \\[3pt]
    &= z_j (0, \theta_t \omega_j) := z_j (\theta_t \omega_j),  \quad t\in \mathbb{R}.
    \end{split}
\end{equation*}
Note that the random variable $|z_j(\omega_j)|$ is tempered and $z_j (\theta_t \omega_j)$ is continuous in $t$. Therefore, it follows from Proposition 4.3.3 in \cite{Ar} that there exists a tempered function $r_0 (\omega)>0$ such that
$$ 
    \sum^m_{j=1}(|z_j(\omega_j)|^2+|z_j(\omega_j)|^p)\leq r_0 (\omega).
$$
For the positive constant $\sigma$ specified later in \eqref{sigma},  the random variable $r_0 (\omega)$ satisfies
\begin{eqnarray}\label{38}
    r_0 (\theta_t\omega)\leq e^{(\sigma / 2) |t|} r_0 (\omega), \quad t\in \mathbb{R}, \;\; a.s.
\end{eqnarray}
Then it follows from (\ref{38}) that
\begin{eqnarray}\label{39}
\sum^m_{j=1}(|z_j(\theta_t\omega_j)|^2+|z_j(\theta_t\omega_j)|^p)\leq e^{(\sigma/2) |t|} r_0 (\omega), \quad t\in \mathbb{R}, \quad a.s.
\end{eqnarray}
Write $z(\theta_t\omega)=\sum^m_{j=1}h_j z_j(\theta_t\omega_j)$. By (\ref{36}), the abstract-valued Ornstein-Uhlenbeck process $z(\theta_t\omega)$ satisfies the stochastic equation
\begin{equation} \label{zqn}
    dz+\delta z dt=\sum^m_{j=1}h_j dW_j.
\end{equation}

Make a transformation 
\begin{equation}  \label{vxt}
    v(x, t)=\xi(x, t)-\varepsilon z(\theta_t\omega).
\end{equation}
Then (\ref{22}) is converted to the following initial value problem:
\begin{equation}\label{310} 
\begin{split}
    &u_t=v+\varepsilon z(\theta_t\omega)-\delta u,\\[6pt]
    v_{t}-\delta v+(\delta^2+\alpha+A)u \, + &f(x, u) = g - \beta (v+\varepsilon z(\theta_t\omega)-\delta u) +2\varepsilon\delta z(\theta_t\omega), \\[6pt]
    u(x, \tau) =u_{0}(x), \ \  &v(x, \tau) =v_{0}(x) =u_{1}(x) +\delta u_{0}(x) -\varepsilon z(\theta_\tau \omega),
\end{split} 
\end{equation}
where $A=-\Delta$. Define the phase space
$$
    E= \left(H^1(\mathbb{R}^n) \cap L^p (\mathbb{R}^n)\right) \times L^2(\mathbb{R}^n)
$$
endowed with the norm
\begin{eqnarray}\label{311}
     \|(u, v)\|_{(H^1 \cap L^{p})\times L^2}= \left(\|\nabla u\|^2+\|u\|^2+\|v\|^2\right)^{\frac 12} + \|u\|_{L^p}, \;\; \textup{for} \ \ (u, v) \in E.
\end{eqnarray}

\begin{lemma} \label{Lwks}
    Under the Assumptions \textup{I} and \textup{II}, for every $\omega\in \Omega$ and any given $g_0 = (u_{0}, v_{0})\in E$, the initial value problem \eqref{310} has a unique global weak solution
$$
    (u(\cdot, \omega, \tau, u_{0}), v(\cdot, \omega, \tau, v_{0}))\in C([\tau, \infty), E).
$$
Moreover,

\textup{1)} The solution $(u(t, \omega, \tau, u_{0}), v(t, \omega, \tau, v_{0}))$ is $(\mathscr{B}(\mathbb{R}^+)\times \mathcal{F} \times \mathscr{B}(\mathbb{R}))$-measurable in $(t, \om, \tau)$ for any given $g_0 \in E$.

\textup{2)} For any $\om\in\gw$ and $t \geq \tau \in \mathbb{R}$, $(u(t, \omega, \tau, u_{0}), v(t, \omega, \tau, v_{0}))$ is weakly continuous with respect to $g_0 = (u_0, v_0)$ in $E$ in the sense that $$
    (u(t, \omega, \tau, u_{0,m}), v(t, \omega, \tau, v_{0,m})) \rightharpoonup (u(t, \omega, \tau, u_{0}), v(t, \omega, \tau, v_{0}))
$$
weakly in $E$, provided that $g_{0,m} = (u_{0,m}, v_{0,m}) \rightharpoonup g_0 = (u_0, v_0)$ weakly in $E$.
\end{lemma}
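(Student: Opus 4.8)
The plan is to run the Faedo--Galerkin scheme, as is standard for semilinear damped wave equations, paying attention to the two features that make \eqref{310} nontrivial: the unbounded domain (no global Sobolev compactness) and the arbitrary exponent $p$ (the nonlinearity lives only in the dual space $L^{p'}$, $p'=p/(p-1)$). Fix $\omega\in\gw$ and $T>\tau$, choose $\{e_j\}_{j\ge1}\subset C_c^\infty(\mbR)$ linearly independent with span dense in $H^1(\mbR)\cap L^p(\mbR)$ (hence in $L^2(\mbR)$), and solve the Galerkin system for $(u_k,v_k)$ in $\mathrm{span}\{e_1,\dots,e_k\}$; local solvability of this ODE system follows from $f\in C^1$ and the continuity of $t\mapsto z(\theta_t\omega)$. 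Testing the $v$-equation with $v_k$ and using the first equation of \eqref{310} to rewrite the elliptic and nonlinear contributions, one obtains a differential inequality for the energy
\[
  \mathcal E_k(t)=\tfrac12\|v_k\|^2+\tfrac12(\delta^2+\alpha)\|u_k\|^2+\tfrac12\|\nabla u_k\|^2+\int_{\mbR}F(x,u_k)\,dx,
\]
in which \eqref{34} absorbs the term coming from $f(x,u_k)$, \eqref{35} gives $\|u_k\|_{L^p}^p\le C(\mathcal E_k+1)$, and Young's inequality handles $\langle g,v_k\rangle$ and the Ornstein--Uhlenbeck terms. For $\delta$ suitably small (this is where the admissible parameter region enters) this yields $\frac{d}{dt}\mathcal E_k+c\,\mathcal E_k\le C(1+|z(\theta_t\omega)|^2+|z(\theta_t\omega)|^p)$, so by Gronwall $\{u_k\}$ is bounded in $L^\infty(\tau,T;H^1\cap L^p)$, $\{v_k\},\{\partial_t u_k\}$ in $L^\infty(\tau,T;L^2)$, $\{f(x,u_k)\}$ in $L^\infty(\tau,T;L^{p'})$ by \eqref{33}, and $\{\partial_t v_k\}$ in $L^\infty(\tau,T;H^{-1}+L^{p'})$ from the $v$-equation; in particular the approximations are global.

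\textbf{Step 2: passage to the limit and the nonlinear term.} Along a subsequence $u_k\rightharpoonup u$, $\partial_t u_k\rightharpoonup\partial_t u$, $v_k\rightharpoonup v$ weak-$*$, and $f(x,u_k)\rightharpoonup\chi$ weak-$*$ in $L^\infty(\tau,T;L^{p'})$. The main obstacle is to identify $\chi=f(x,u)$ without compact Sobolev embeddings: for each ball $B_R$ the embedding $H^1(B_R)\hookrightarrow L^2(B_R)$ is compact, so the Aubin--Lions lemma applied with the bound on $\partial_t u_k$ gives $u_k\to u$ strongly in $C([\tau,T];L^2(B_R))$; a diagonal extraction over $R\to\infty$ produces $u_k\to u$ a.e.\ on $\mbR\times[\tau,T]$, hence $f(x,u_k)\to f(x,u)$ a.e.\ by continuity of $f$, and since $\{f(x,u_k)\}$ is bounded in $L^{p'}$, Lions' lemma on weak versus a.e.\ convergence gives $\chi=f(x,u)$. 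Passing to the limit in the Galerkin equations and using density of $\bigcup_k\mathrm{span}\{e_j\}$ shows $(u,v)$ is a weak solution of \eqref{310} on $[\tau,T]$, with initial data $(u_0,v_0)$ inherited from $u_k(\tau)\to u_0$, $v_k(\tau)\to v_0$ in $E$ and the bounds on the time derivatives.

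\textbf{Step 3: uniqueness, continuity in $t$, measurability.} For two weak solutions with data $g_0^1,g_0^2$ the difference $(\bar u,\bar v)=(u^1-u^2,v^1-v^2)$ solves \eqref{310} with nonlinearity $f(x,u^1)-f(x,u^2)$ and no forcing; testing with $\bar v$, using $\bar u_t=\bar v-\delta\bar u$, and estimating $\langle f(x,u^1)-f(x,u^2),\bar v\rangle$ by writing the difference as $\bar u\int_0^1 f_u(x,su^1+(1-s)u^2)\,ds$ and invoking $f\in C^1$ together with the $L^\infty(H^1\cap L^p)$-bounds on $u^1,u^2$ from Step 1, one obtains via Gronwall both uniqueness and Lipschitz dependence of the $H^1\times L^2$-component on the data. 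Uniqueness lets one glue the local solutions into one in $C([\tau,\infty),E)$, and the energy identity for $\mathcal E$ upgrades the a priori weak-in-time continuity into strong $E$-continuity by the ``weak continuity plus continuity of the norm'' principle. Measurability of $(u,v)$ in $(t,\omega,\tau)$ is inherited from the Galerkin maps, each of which solves a finite-dimensional ODE whose vector field is continuous in $(t,\omega,\tau)$ through $z(\theta_t\omega)$, and the a.e.\ limit of measurable maps is measurable.

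\textbf{Step 4: weak continuity in the initial datum.} If $g_{0,m}\rightharpoonup g_0$ weakly in $E$, then $\{g_{0,m}\}$ is bounded in $E$ and the estimates of Step 1 bound $\{(u_m,v_m)\}$ in $C([\tau,T];E)$ with their time derivatives bounded in $L^\infty(\tau,T;L^2\times(H^{-1}+L^{p'}))$ for every $T$. Applying Step 2 to this sequence, a subsequence converges (weak-$*$ and a.e.) to a weak solution of \eqref{310} whose initial datum is $g_0$ (since $u_m(\tau)=u_{0,m}\rightharpoonup u_0$, $v_m(\tau)\rightharpoonup v_0$, and the family $\{(u_m(\cdot),v_m(\cdot))\}$ is equicontinuous in a weaker topology while bounded in $E$); by Step 3 that limit equals $(u(\cdot,\omega,\tau,u_0),v(\cdot,\omega,\tau,v_0))$, and since every subsequence has a further subsequence with this same limit, the whole sequence converges. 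Finally, for each fixed $t$, $(u_m(t),v_m(t))$ is bounded in $E$ and converges weakly in $L^2\times L^2$, hence converges weakly in $E$, which is the assertion. The genuinely hard points are thus the identification of the weak limit of $f(x,u_m)$ on $\mbR$ in Step 2, resolved by the localization-plus-diagonal argument, and the control of the superlinear cross-term in the uniqueness estimate of Step 3, which is where the structural hypotheses on $f$ and the parameter restriction are used.
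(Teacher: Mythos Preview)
Your Galerkin scheme is exactly the approach the paper invokes: its own proof is a two-line reference to the Faedo--Galerkin method in Chepyzhov--Vishik and to Lemma~3.1 of Wang (2011), deferring the global-in-time a~priori bound to the proof of Lemma~\ref{L41}. So Steps~1, 2 and 4 of your write-up are a faithful (and considerably more detailed) realisation of what the paper merely cites, including the localisation-plus-diagonal argument to identify the weak limit of $f(x,u_k)$ on the unbounded domain.

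Step~3, however, is too optimistic as written. You estimate $\langle f(x,u^1)-f(x,u^2),\bar v\rangle$ by the mean-value form $\bar u\int_0^1 f_u(\cdot)\,ds$ and claim a Gronwall inequality in $H^1\times L^2$. But Assumption~II gives only $|f(x,u)|\le C_1|u|^{p-1}+\phi_1$ with no bound on $f_u$, and even if $|f_u|\lesssim |u|^{p-2}+1$, the resulting integral $\int |u|^{p-2}|\bar u|\,|\bar v|\,dx$ requires H\"older exponents $q,r$ with $(p-2)/q+1/r+1/2=1$, $u\in L^q$, $\bar u\in L^r$; since you only have $u,\bar u\in H^1\cap L^p$ and $\bar v\in L^2$, for supercritical $p$ (large $p$, large $n$) no admissible choice exists---indeed $f(x,u^1)-f(x,u^2)\in L^{p'}$ with $p'<2$, so the pairing with $\bar v\in L^2$ is not even a priori defined. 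The paper sidesteps this entirely by citation, so there is nothing to compare your argument to here; but the Lipschitz/uniqueness claim in your Step~3 cannot be closed by the energy method alone under the stated hypotheses on $f$, and either an extra structural assumption (e.g.\ a lower bound $f_u\ge -\lambda$ giving a monotonicity estimate on $\langle f(x,u^1)-f(x,u^2),\bar u\rangle$ combined with a separate treatment of the $\bar u_t$ piece) or an appeal to the specific references is needed.
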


\begin{proof}
     The local existence and uniqueness of a weak solution to the $\omega$-parametrized PDE problem (\ref{310}) in the phase space $E=(H^1(\mathbb{R}^n) \cap L^p(\mathbb{R}^n)) \times L^2(\mathbb{R}^n)$ and its weakly continuous dependence on the initial data can be established by the Galerkin approximation method as in \cite[Chapter XV]{ChepyVishik02} and the result in Lemma 3.1 of \cite{Wang11}. Also see \cite{te, Wang, WangZhouGu}. The proof of the global existence of weak solutions will be included in the proof of Lemma \ref{L41} below. Here it is omitted.
\end{proof}
The solution mapping can be used to define a random dynamical system. 

\begin{definition} \label{sflow}
    A family of mappings $S(t, \tau; \omega): X \to X$ on a Banach space $X$ for $t \geq \tau \in \mathbb{R}$ and $\omega\in\Omega$ is called a stochastic semiflow, if it satisfies the following properties:

    (i) $S(t, s; \omega) S(s, \tau; \omega) = S(t, \tau; \omega)$, for all $\tau \leq s \leq t$ and $\omega\in\Omega$;

    (ii) $S(t, \tau; \omega) = S(t-\tau, 0; \theta_{\tau} \omega)$, for all $\tau \leq t$ and $\omega\in\Omega$;

    (iii) The mapping $S(t, \tau; \omega)x$ is measurable in $(t, \tau, \omega)$ and continuous in $x \in X$.
\end{definition}

Here for this problem \eqref{22} and the converted version \eqref{310} we define
\begin{equation} \label{ss}
    \begin{split}
    S(t, \tau; \omega) (u_0, v_0) &= (u, v)(t, \omega, \tau, (u_0, v_0)) \\[2pt]
    &= (u (t, \omega, \tau, u_0), v(t, \omega, \tau, u_1 + \delta u_0 - \varepsilon z(\theta_\tau \omega))),
    \end{split}
\end{equation}
where $(u, v) (t, \omega, \tau, (u_0, v_0))$ is the weak solution of the initial value problem \eqref{310}, shown above. This mapping $S(t, \tau; \omega): E \to E$ is a stochastic semiflow. Then define a mapping $\Phi: \mathbb{R}^+ \times \Omega \times E \to E$ by
\begin{equation} \label{CC}
    \Phi (t-\tau, \theta_\tau \omega, (u_0, v_0)) = S(t, \tau; \, \omega) (u_0, v_0),
\end{equation}
which is equivalent to
\begin{equation} \label{ccl}
    \begin{split}
    \Phi (t, \omega, (u_0, v_0)) &= S(t, 0; \omega) (u_0, v_0) \\[2pt]
    &= (u (t, \omega, 0, u_0), v(t, \omega, 0, u_1 + \delta u_0 - \varepsilon z(\omega))).
    \end{split}
\end{equation}
\begin{lemma} \label{Lcocycle}
    The mapping $\Phi: \mathbb{R}^+ \times \Omega \times E \to E$ defined by \eqref{CC} is a random dynamical system \textup{(}or called a cocycle\textup{)} on $E$ over the canonical parametric dynamical system $(\Omega, \mathcal{F}, P, \{\theta_t\})$. Moreover,
\begin{equation} \label{pbj}
    \begin{split}
    \Phi (t, \theta_{-t} \omega, (u_0, v_0)) &= (u(0, \omega, -t, u_{0}), v(0, \omega, -t, v_{0})) \\[1pt]
        &= (u(0, \omega, -t, u_{0}), \xi(0, \omega, -t, \xi_{0})-\varepsilon z(\omega)), \quad t \geq 0,
    \end{split}
\end{equation}
will be called a pullback quasi-trajectory.
\end{lemma}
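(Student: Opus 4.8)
\medskip
\noindent\textit{Proof plan.}
The plan is to derive every assertion from the stochastic semiflow properties of $S(t,\tau;\omega)$ in Definition \ref{sflow} together with the dictionary \eqref{CC}--\eqref{ccl} between $S$ and $\Phi$, so the first step is to confirm that the family $S(t,\tau;\omega)$ introduced in \eqref{ss} really is a stochastic semiflow on $E$. The composition law $S(t,s;\omega)S(s,\tau;\omega)=S(t,\tau;\omega)$ for $\tau\le s\le t$ I would read off from the uniqueness of the global weak solution of \eqref{310} in Lemma \ref{Lwks}: restricting to $[s,\infty)$ the solution issued at time $\tau$ gives a weak solution of \eqref{310} on $[s,\infty)$ with initial value $(u,v)(s,\omega,\tau,\cdot)$, hence it coincides with the solution issued at time $s$. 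The translation law $S(t,\tau;\omega)=S(t-\tau,0;\theta_\tau\omega)$ is the one point that needs genuine, if brief, checking: the time dependence of \eqref{310} enters only through the Ornstein--Uhlenbeck process $z(\theta_t\omega)$, and since $\{\theta_t\}_{t\in\mathbb{R}}$ is a group one has $z(\theta_t\omega)=z(\theta_{t-\tau}(\theta_\tau\omega))$; consequently $t\mapsto(u,v)(t+\tau,\omega,\tau,\cdot)$ and $t\mapsto(u,v)(t,\theta_\tau\omega,0,\cdot)$ solve the same initial value problem on $[0,\infty)$ (the shift in the $z$-term entering $v_0$ in \eqref{ss} being exactly the one needed for consistency), and uniqueness forces them to agree. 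The measurability in $(t,\tau,\omega)$ and continuity in the initial datum required by part (iii) of Definition \ref{sflow} are precisely what Lemma \ref{Lwks} provides.

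Granting that $S$ is a stochastic semiflow, the cocycle axioms of Definition \ref{D2} for $\Phi(t,\omega,\cdot)=S(t,0;\omega)$ come out mechanically. Axiom (i) is immediate, $S(0,0;\omega)=\mathrm{id}_E$. For axiom (ii) I would use the translation law to write $\Phi(t,\theta_s\omega,\cdot)=S(t,0;\theta_s\omega)=S(t+s,s;\omega)$ and then the composition law to get
$$
\Phi(t,\theta_s\omega,\Phi(s,\omega,\cdot))=S(t+s,s;\omega)\,S(s,0;\omega)=S(t+s,0;\omega)=\Phi(t+s,\omega,\cdot)
$$
for all $s,t\ge0$ and $\omega\in\Omega$. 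Axiom (iii) is the continuity in the initial datum already contained in Lemma \ref{Lwks}. For the joint $(\mathscr{B}(\mathbb{R}^+)\times\mathcal{F}\times\mathscr{B}(E),\mathscr{B}(E))$-measurability of $\Phi$ I would combine the measurability of $S(t,0;\omega)(u_0,v_0)$ in $(t,\omega)$ for each fixed $(u_0,v_0)$ (Lemma \ref{Lwks}(1) with $\tau=0$) with its continuity in $(u_0,v_0)\in E$, invoking the standard fact that a map which is measurable in one block of variables and continuous in the other, valued in a separable metric space, is jointly measurable. This makes $(\Phi,\theta)$ a random dynamical system on $E$ over $(\Omega,\mathcal{F},P,\{\theta_t\})$.

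Finally, \eqref{pbj} I would obtain by one more application of the translation law: $\Phi(t,\theta_{-t}\omega,(u_0,v_0))=S(t,0;\theta_{-t}\omega)=S(0,-t;\omega)(u_0,v_0)$, which by \eqref{ss} equals $(u(0,\omega,-t,u_0),\,v(0,\omega,-t,v_0))$ with $v_0=u_1+\delta u_0-\varepsilon z(\theta_{-t}\omega)$, the first line of \eqref{pbj}. For the second line, the substitution \eqref{vxt}, $v(x,t)=\xi(x,t)-\varepsilon z(\theta_t\omega)$, evaluated at $t=0$ with $\theta_0=\mathrm{id}$, gives $v(0,\omega,-t,v_0)=\xi(0,\omega,-t,\xi_0)-\varepsilon z(\omega)$. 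I expect the only non-routine step to be the translation law $S(t,\tau;\omega)=S(t-\tau,0;\theta_\tau\omega)$, which rests on the cocycle structure of the Ornstein--Uhlenbeck process and the uniqueness half of Lemma \ref{Lwks}; the rest is bookkeeping organized around Definitions \ref{D1}--\ref{sflow}.
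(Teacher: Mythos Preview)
Your proof plan is correct and complete. The paper actually states this lemma without proof, treating it as a routine consequence of the stochastic semiflow structure and Lemma~\ref{Lwks}; your argument supplies precisely the standard verification the paper omits, and the one genuinely substantive step you single out---the translation law $S(t,\tau;\omega)=S(t-\tau,0;\theta_\tau\omega)$ via the identity $z(\theta_t\omega)=z(\theta_{t-\tau}(\theta_\tau\omega))$ and uniqueness---is indeed the heart of the matter.
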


\section{\large{\bf{Uniform Estimates of Pullback Quasi-Trajectories}}}

In this section, we shall derive uniform estimates on the solutions of the random wave equation (\ref{310}) defined on $\mathbb{R}^n$ in a long run as $t\rightarrow \infty$.
These \emph{a priori} estimates pave the way to proving the existence of  pullback absorbing sets and the pullback asymptotic compactness of the cocycle $\Phi$. In particular, we will show that the tails of the solutions for large spatial variables are uniformly small when time is sufficiently large.

Define a new  norm of $E$ by
\begin{equation} \label{Enorm}
\|(u, v)\|_E= \left(\|v\|^2+(\alpha+\delta^2- \beta \delta)\|u\|^2+\|\nabla u\|^2\right)^{\frac 12} + \|u\|_{L^p},
\end{equation}
in which and hereafter let $\delta$ be a fixed positive constant satisfying
\begin{eqnarray}\label{40}
\alpha+\delta^2- \beta \delta>0 \qquad \textup{and} \qquad \beta - 3\delta > 0.
\end{eqnarray}
It is easy to see that $\|\cdot\|_E$ in \eqref{Enorm} and the Sobolev norm $\|\cdot\|_{(H^1\cap L^p)\times L^2}$ in (\ref{311}) are equivalent.

\textbf{Assumption III}.
Let the intensity $\varepsilon>0$ of the additive noise satisfy
\begin{eqnarray}\label{41}
    0 < \varepsilon < \frac{\delta C_2 C_3 p}{C_1(p-1)},
\end{eqnarray}
where $p > 2$ and $C_i (i=1, 2, 3)$ are the positive constants in Assumption II 
and $\delta$ is the fixed constant in \eqref{40}.

\subsection{\textbf{Pullback Absorbing Set}}

The next lemma shows that there exists a random absorbing set in the universe
$\msd = \msd_E$ for the random dynamical system $\Phi$ associated with the problem \eqref{310}.

\begin{lemma} \label{L41}
Under the Assumptions \textup{I, II} and \textup{III}, there exists a $\msd$-pullback absorbing set $K = \{K(\omega)\}_{\omega\in \Omega}\in \msd$ for the cocycle $\Phi$ associated with the problem \eqref{310}. For any $B=\{B(\omega)\}_{ \omega\in \Omega}\in \msd$ and 
$\omega\in \Omega$, there exists a finite $T_B (\omega) >0$, such that
\begin{eqnarray*}
    \Phi (t, \theta_{-t}\omega, B(\theta_{-t}\omega))\subset K(\omega), \quad \textup{for all}\;\; t\geq T_B (\omega).
\end{eqnarray*}
\end{lemma}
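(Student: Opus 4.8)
The plan is to obtain a differential inequality for the energy functional associated with the new norm $\|\cdot\|_E$ and then apply a Gronwall-type argument pulled back in time. First I would introduce the Lyapunov-type functional
\[
    \Psi(t) = \|v\|^2 + (\alpha+\delta^2-\beta\delta)\|u\|^2 + \|\nabla u\|^2 + 2\!\int_{\mathbb{R}^n}\! F(x,u)\,dx,
\]
which, thanks to \eqref{35} and \eqref{40}, is bounded below and above by (affine functions of) $\|(u,v)\|_E^2$ up to the $L^1$-terms $\phi_3$. Testing the first equation of \eqref{310} and the second equation against the appropriate multipliers (namely $(\delta^2+\alpha+A)u + f(x,u)$ and $v$, respectively, the natural choices coming from the $\xi = u_t + \delta u$ splitting) and adding, I would compute $\tfrac{d}{dt}\Psi$. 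The damping term $-\beta(v-\delta u)\cdot v$ and the term $-\delta v \cdot v$ produce the dissipation, while $\delta^2+\alpha+A$ and $f$ terms combine with $\Psi$; the key algebraic point is that for a suitable $\sigma>0$ one gets
\[
    \frac{d}{dt}\Psi + \sigma \Psi \le C\big(\|g\|^2 + \varepsilon^2 |z(\theta_t\omega)|^2 + \varepsilon^p |z(\theta_t\omega)|^p + \|\phi_1\|^2 + \|\phi_2\|_{L^1} + \|\phi_3\|_{L^1}\big),
\]
where Assumption III \eqref{41} is exactly what is needed to absorb the bad cross-term $\varepsilon\, f(x,u)\cdot z$ (estimated via \eqref{33} and Young's inequality, producing an $\varepsilon C_1(p-1)/p$ coefficient on $\|u\|_{L^p}^p$) into the good term $\delta C_2 C_3\|u\|_{L^p}^p$ coming from \eqref{34}–\eqref{35}. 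The constraint $\beta - 3\delta > 0$ in \eqref{40} is what keeps the quadratic form in $(u,v,\nabla u)$ coercive after the multiplier manipulations.

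Next I would integrate this inequality. Writing $h(\omega)$ for the right-hand side with $\theta_t\omega$ frozen at $\omega$, i.e. $h(\omega) = C(1 + r_0(\omega))$ after using \eqref{39}, and replacing $\omega$ by $\theta_{-t}\omega$ and the initial time by $\tau = -t$, Gronwall gives
\[
    \Psi(0) \le e^{-\sigma t}\Psi(-t) + C\!\int_{-t}^{0} e^{\sigma s}\,\big(1 + r_0(\theta_s\omega)\big)\,ds.
\]
For $g_0 \in B(\theta_{-t}\omega)$ with $B \in \msd$ tempered, $\Psi(-t) \le C\|(u_0,v_0)\|_E^2 + C\|\phi_3\|_{L^1} \le C\,e^{\sigma t}\cdot o(1)$, so the first term vanishes as $t\to\infty$; and the integral converges because $r_0(\theta_s\omega) \le e^{(\sigma/2)|s|} r_0(\omega)$ by \eqref{38}, so $e^{\sigma s} r_0(\theta_s\omega) \le e^{(\sigma/2)s} r_0(\omega)$ is integrable on $(-\infty,0]$. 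This produces a tempered bound $R_0(\omega)^2 := 1 + r_0(\omega) + \int_{-\infty}^0 e^{\sigma s}(1+r_0(\theta_s\omega))\,ds$ on $\Psi(0)$, hence on $\|(u(0),v(0))\|_E$, uniformly for $g_0 \in B(\theta_{-t}\omega)$ once $t \ge T_B(\omega)$. Translating back through \eqref{pbj} (the map $\xi \mapsto v = \xi - \varepsilon z(\omega)$ only shifts by the tempered quantity $\varepsilon|z(\omega)|$), I would set
\[
    K(\omega) = \{(u,v)\in E : \|(u,v)\|_E \le R(\omega)\}
\]
for an appropriate tempered $R(\omega)$; it lies in $\msd$ because $R$ is tempered, it is closed, and the displayed absorbing inclusion holds. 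The measurability of $\omega \mapsto R(\omega)$ follows from the measurability of $r_0$ and continuity of $s\mapsto r_0(\theta_s\omega)$.

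The main obstacle is making the energy estimate rigorous at the level of weak solutions on the unbounded domain $\mathbb{R}^n$ with an arbitrary exponent $p$: the multiplier $(\delta^2+\alpha+A)u + f(x,u)$ is not obviously a legitimate test function for a weak solution, since $f(x,u)$ need only lie in $L^{p/(p-1)}$ and $Au$ in $H^{-1}$. The standard remedy, which I would carry out, is to derive the inequality first for the Galerkin approximations $(u_N, v_N)$ — where all manipulations are justified and all constants are $N$-independent — obtain the uniform bound in $E$, and then pass to the limit using weak/weak-$*$ lower semicontinuity of the norm and of $\int F(x,u)\,dx$ (using \eqref{35}); this simultaneously yields the global existence asserted in Lemma \ref{Lwks}, which is why that proof was deferred here. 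A secondary technical point is verifying that the $\phi_i$-dependent constant terms are genuinely finite, which is guaranteed by Assumption II, and that the tempered envelope $R(\omega)$ indeed satisfies the temperedness condition in Definition \ref{D3}, which reduces to the integrability estimate above together with $\lim_{t\to-\infty} t^{-1}\log^+ r_0(\theta_t\omega) = 0$.
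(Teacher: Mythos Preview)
Your proposal is correct and follows essentially the same route as the paper: test the $v$-equation with $v$, use $v=u_t+\delta u-\varepsilon z(\theta_t\omega)$ to convert the cross-terms into time derivatives of the functional $\Psi$, invoke \eqref{34}--\eqref{35} and Assumption~III to get the dissipative inequality $\frac{d}{dt}\Psi+\sigma\Psi\le C(1+\Gamma_1(\theta_t\omega))$ with $\sigma=\min\{\delta,\ \delta C_2-\varepsilon C_1(p-1)/(C_3 p)\}$, and then run Gronwall on $[-t,0]$ with the temperedness of $B$ and \eqref{38}. Two small inaccuracies to fix: (i) $\Psi$ is not bounded above by an affine function of $\|(u,v)\|_E^2$, since $\int F(x,u)\,dx$ contributes a term of order $\|u\|_{L^p}^p$; the correct bound is $\Psi\le C(1+\|(u,v)\|_E^2+\|(u,v)\|_E^p)$, which is still harmless for the tempered decay; (ii) the right-hand side of your differential inequality should include $\|\nabla z(\theta_t\omega)\|^2$ (coming from the $\langle Au,\varepsilon z\rangle$ term), not just $|z|^2$ and $|z|^p$. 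Your remark about justifying the multiplier step via Galerkin approximations is exactly what the paper implicitly relies on (and defers from Lemma~\ref{Lwks}).
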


\begin{proof}
 Taking the inner product of the second equation of (\ref{310}) with $v$ in $L^2(\mathbb{R}^n)$, we get
\begin{equation}\label{42}
    \begin{split}
    &\frac 12\frac{d}{dt}\|v\|^2- \delta\|v\|^2+(\alpha+\delta^2) \inpt{u, v} +\inpt{Au, v}+\inpt{f(x, u), v}  \\[3pt]
    =&\, -\inpt{\beta (v-\delta u+\varepsilon z(\theta_t\omega)), v}+2\delta\varepsilon\inpt{z(\theta_t\omega), v}+\inpt{g(x), v}.
    \end{split}
\end{equation}
By the first equation of (\ref{310}), we have
\begin{eqnarray}\label{43}
v=u_t-\varepsilon z(\theta_t\omega)+\delta u.
\end{eqnarray}
and
\begin{eqnarray}\label{44}
  -\inpt{\beta (v+\varepsilon z(\theta_t\omega)-\delta u), v} \leq -\beta \|v\|^2+ \beta \varepsilon \|z(\theta_t\omega)\| \|v\|+ \beta \delta \inpt{u, v}.
\end{eqnarray}
Substituting the above (\ref{43}) into the third and fourth terms on the left-hand side of (\ref{42}), we find that
\begin{eqnarray}\label{45}
\inpt{u, v} = \inpt{u, u_t+\delta u-\varepsilon z(\theta_t\omega)} \geq \frac 12\frac{d}{dt}\|u\|^2+\delta\|u\|^2-\varepsilon\|z(\theta_t\omega)\| \|u\|
\end{eqnarray}
and
\begin{equation}\label{46}
    \begin{split}
    \inpt{Au, v} & = \inpt{\nabla u, \nabla v} = \inpt{\nabla u, \nabla u_t+\delta \nabla u-\varepsilon \nabla z(\theta_t\omega)} \\[4pt]
    &\geq \frac 12\frac{d}{dt}\|\nabla u\|^2+\delta\|\nabla u\|^2-\varepsilon \|\nabla z(\theta_t\omega)\| \|\nabla u\|.
    \end{split}
\end{equation}
For the last term on the left-hand side of (\ref{42}), we have
\begin{equation}\label{47}
    \begin{split}
    &\inpt{f(x, u), v}=\inpt{f(x, u), u_t+\delta u - \varepsilon z(\theta_t\omega)}  \\[4pt]
    =&\, \frac{d}{dt}\int_{\mathbb{R}^n}F(x, u)dx + \delta\inpt{f(x, u), u}-\varepsilon \inpt{f(x, u), z(\theta_t\omega)}.
    \end{split}
\end{equation}
By the Assumption II, we get
\begin{equation}\label{48}
    \begin{split}
    &\delta\inpt{f(x, u), u}-\inpt{f(x, u), \varepsilon z(\theta_t\omega)}  \\[7pt]
    \geq &\, \delta C_2\int_{\mathbb{R}^n}F(x, u)dx + \delta\int_{\mathbb{R}^n}\phi_2(x)dx-\varepsilon C_1\int_{\mathbb{R}^n}|u|^{p-1}|z(\theta_t\omega)|dx \\[4pt]
    &\, -\varepsilon\int_{\mathbb{R}^n}|\phi_1(x)||z(\theta_t\omega)|dx  \\[4pt]
    \geq &\, \delta C_2\int_{\mathbb{R}^n}F(x, u)dx + \delta\|\phi_2\|_{L^1} -\frac{\varepsilon C_1(p-1)}{p}\|u\|^{p}_{L^p}-\frac{\varepsilon C_1}{p}\|z(\theta_t\omega)\|^{p}_{L^p} \\[4pt]
    &\, -\frac \varepsilon 2\|\phi_1\|^2-\frac \varepsilon 2\|z(\theta_t\omega)\|^2  \\[4pt]
    \geq &\, \delta C_2\int_{\mathbb{R}^n}F(x, u)dx + \delta\|\phi_2\|_{L^1}-\frac{\varepsilon C_1(p-1)}{C_3\, p}\int_{\mathbb{R}^n}F(x, u)\, dx -\frac{\varepsilon C_1(p-1)}{C_3\, p}\|\phi_3\|_{L^1} \\[4pt]
    &\, -\frac{\varepsilon C_1}{p}\|z(\theta_t\omega)\|^{p}_{L^p} -\frac \varepsilon2\|\phi_1\|^2-\frac \varepsilon2\|z(\theta_t\omega)\|^2  \\[4pt]
    \geq &\, \left(\delta C_2-\frac{\varepsilon C_1(p-1)}{C_3\, p}\right) \int_{\mathbb{R}^n}F(x, u)\,dx- \left(\frac{\varepsilon C_1}{p}\|z(\theta_t\omega)\|^{p}_{L^p} +\frac \varepsilon2\|z(\theta_t\omega)\|^2\right)-C_4,
    \end{split}
\end{equation}
where
$$
    C_4 = \frac{\varepsilon}{2} \|\phi_1\|^2 - \delta \|\phi_2 \|_{L^1} + \frac{\varepsilon C_1 (p -1)}{C_3\, p} \|\phi_3\|_{L^1}.
$$
For the last term on the right-hand side of (\ref{42}),
\begin{eqnarray}\label{49}
\inpt{g, v}\leq \|g\| \|v\|\leq \frac{\|g\|^2}{2(\beta-\delta)} + \frac{\beta-\delta}{2}\|v\|^2.
\end{eqnarray}
Substitute (\ref{44})-(\ref{49}) into (\ref{42}). Then we obtain

\begin{eqnarray}\label{410}
&&\frac 12\frac{d}{dt}\|v\|^2-\delta\|v\|^2+\frac12 \left(\alpha+\delta^2- \beta \delta \right) \frac{d}{dt}\|u\|^2+ \delta \left(\alpha+\delta^2- \beta \delta \right)\|u\|^2\nonumber\\[6pt]
&&- \,\varepsilon \left(\alpha+\delta^2-\beta \delta \right) \|z(\theta_t\omega)\|\|u\|+ \frac 12\frac{d}{dt}\|\nabla u\|^2+\delta \|\nabla u\|^2\nonumber \\[6pt]
&& -\,\varepsilon \|\nabla z(\theta_t\omega)\|\|\nabla u\|+\frac{d}{dt} \int_{\mathbb{R}^n} F(x, u)\, dx + \left(\delta C_2-\frac{\varepsilon
 C_1(p-1)}{C_3\, p} \right) \int_{\mathbb{R}^n}F(x, u)\, dx \nonumber\\[5pt]
&& -\left(\frac{\varepsilon C_1}{p}\|z(\theta_t\omega)\|^{p}_{L^p}+\frac\varepsilon2 \|z(\theta_t\omega)\|^2 \right)-C_4 +\beta \|v\|^2- \beta \varepsilon\|z(\theta_t\omega)\|\|v\|
\nonumber\\[5pt]
&& \leq \frac{\|g\|^2}{2(\beta -\delta)}+ \frac{\beta - \delta}{2}\|v\|^2+2\varepsilon \delta\|z(\theta_t\omega)\|\|v\|.
\end{eqnarray}
Grouping some terms together in \eqref{410},  we obtain
\begin{equation}\label{411}
    \begin{split}
    &\frac 12\frac{d}{dt}\left[\|v\|^2+ \left(\alpha+\delta^2-\beta\delta \right)\|u\|^2+\|\nabla u\|^2+2\int_{\mathbb{R}^n}F(x, u)\, dx\right]   \\[5pt]
    & +\, \frac{\delta}{2} \left[\|v\|^2+ \left(\alpha+\delta^2-\beta \delta \right)\|u\|^2+\|\nabla u\|^2 \right]  \\[5pt]
    & + \left(\delta C_2-\frac{\varepsilon C_1(p-1)}{C_3\, p}\right) \int_{\mathbb{R}^n} F(x, u)\, dx   \\[5pt]
    \leq &\, \frac{3\delta-\beta}{2}\|v\|^2+\frac{\|g\|^2}{2(\beta-\delta)}+ \frac{\varepsilon^2(\alpha+\delta^2-\beta\delta)}{2\delta}\, \|z(\theta_t\omega)\|^2 +\frac{\varepsilon^2}{2\delta}\, \|\nabla z(\theta_t\omega)\|^2   \\[5pt]
    & +\, \frac{\varepsilon^2(2\delta+\beta)^2}{2\delta}\, \|z(\theta_t\omega)\|^2 + \frac{\varepsilon C_1}{p}\, \|z(\theta_t\omega)\|^{p}_{L^p}+\frac \varepsilon2\|z(\theta_t\omega)\|^2+C_4  \\[4pt]
    \leq& \frac{\|g\|^2}{2(\beta -\delta)} + C_0 \left(\|z(\theta_t\omega)\|^2+\|\nabla z(\theta_t\omega)\|^2 +\|z(\theta_t\omega)\|^{p}_{L^p} \right) + C_4 ,
    \end{split}
\end{equation}
where $C_0 > 0$ is a constant and by \eqref{40} the term $(3\delta - \beta)\|v\|^2/2 \leq 0$. Since $z(\theta_t\omega)=\sum^m_{j=1}h_jz_j(\theta_t\omega_j)$ and $h_j\in H^2(\mathbb{R}^n)\cap W^{2, p}(\mathbb{R}^n)$,  by \eqref{38} and \eqref{39} we see that there is a constant $C_5 > 0$ such that for $P$-a.e. $\omega\in\Omega,$
\begin{equation} \label{413}
    \begin{split}
    &\Gamma_1(\theta_t \, \omega) := C_0 \left(\|z(\theta_t \omega)\|^2+\|\nabla z(\theta_t \omega)\|^2 +\|z(\theta_t \omega)\|^{p}_{L^p} \right) \\[3pt]
    \leq &\, c\, \sum^m_{j=1}\left(\|z_j(\theta_t\omega_j)\|^2 + \|z_j(\theta_t\omega_j)\|_{L^p}^{p}\right) \leq C_5\, e^{\frac 12 \sigma |t|} r_0 (\omega), \quad \textup{for all} \;\; t \in \mathbb{R}.
    \end{split}
\end{equation}
It follows from (\ref{411})-(\ref{413}) that

\begin{equation*}
    \begin{split}
    &\, \frac{d}{dt}\left[\|v\|^2+ \left(\alpha+\delta^2-\beta\delta\right)\|u\|^2+\|\nabla u\|^2+2\int_{\mathbb{R}^n} F(x, u)\, dx\right]   \\
    &\, +\, \delta \left[\|v\|^2+ \left(\alpha+\delta^2-\beta\delta\right)\|u\|^2+\|\nabla u\|^2\right] +2\left(\delta C_2-\frac{\varepsilon C_1(p-1)}{C_3\, p}\right) \int_{\mathbb{R}^n} F(x, u)\, dx  \\
    &\leq \frac{\|g\|^2}{\beta-\delta}+2\Gamma_1(\theta_t\omega)+2C_4, \quad t \in \mathbb{R}, \;\, \om \in \gw.
    \end{split}
\end{equation*}
It leads to the differential inequality
\begin{equation}\label{414}
    \begin{split}
    &\, \frac{d}{dt}\left[\|v\|^2+ \left(\alpha+\delta^2-\beta\delta\right)\|u\|^2+\|\nabla u\|^2+2\int_{\mathbb{R}^n} (F(x, u) + \phi_3 (x))\, dx\right]   \\[5pt]
    &\, +\, \delta \left[\|v\|^2+ \left(\alpha+\delta^2-\beta\delta\right)\|u\|^2+\|\nabla u\|^2 \right]  \\[5pt]
    &\, + 2 \left(\delta C_2 - \frac{\varepsilon C_1 (p-1)}{C_3 p} \right)\int_{\mathbb{R}^n} (F(x, u) + \phi_3 (x))\, dx   \\[5pt]
    \leq &\, \frac{\|g\|^2}{\beta-\delta}+2\Gamma_1(\theta_t\omega) + 2\left(\delta C_2-\frac{\varepsilon C_1(p-1)}{C_3\, p}\right) \|\phi_3 \|_{L^1} +2C_4.
    \end{split}
\end{equation}
By (\ref{41}) in the Assumption III, let $\sigma$ be a fixed positive constant:
\begin{equation} \label{sigma}
    \sigma=\min \left\{\delta, \, \delta C_2-\frac{\varepsilon C_1(p-1)}{C_3\, p} \right\} > 0.
\end{equation}
We have $\int_{\mathbb{R}^n} (F(x, u) + \phi_3 (x))\, dx \geq 0$ due to \eqref{35} in the Assumption II. It follows from \eqref{414} and \eqref{sigma} that
\begin{equation} \label{sgre}
    \begin{split}
    &\, \frac{d}{dt}\left[\|v\|^2+ \left(\alpha+\delta^2-\beta\delta\right)\|u\|^2+\|\nabla u\|^2+2\int_{\mathbb{R}^n} (F(x, u) + \phi_3 (x))\, dx\right]   \\
    &\, + \sigma \left[\|v\|^2+ \left(\alpha+\delta^2-\beta\delta\right)\|u\|^2+\|\nabla u\|^2 +2 \int_{\mathbb{R}^n} (F(x, u) + \phi_3 (x))\, dx\right]  \\
    \leq &\, \frac{\|g\|^2}{\beta-\delta}+2\Gamma_1(\theta_t\omega) + C_6, \quad t \in \mathbb{R}, \; \, \om \in \gw,
    \end{split}
\end{equation}
where $C_6 = 2\left(\delta C_2-\frac{\varepsilon C_1(p-1)}{C_3\, p}\right) \|\phi_3 \|_{L^1} +2C_4$.
Thus we can apply the Gronwall lemma to the above inequality to obtain that, for any $\om \in \gw$ and $t \geq \tau$, the solution of \eqref{310} satisfies
\begin{equation}\label{415}
    \begin{split}
    &\, \|v(t, \omega, \tau, v_0)\|^2+ \left(\alpha+\delta^2-\beta \delta\right)\|u(t, \omega, \tau, u_0)\|^2   \\[4pt]
    &\, + \|\nabla u(t, \omega, \tau, u_0 )\|^2+2\int_{\mathbb{R}^n} (F(x, u( t, \omega, \tau, u_0 )) + \phi_3 (x))\, dx  \\
    \leq &\, e^{-\sigma (t - \tau)} \left[\|v_0\|^2+ \left(\alpha+\delta^2-\beta \delta\right)\|u_0\|^2 +\|\nabla u_0\|^2+2\int_{\mathbb{R}^n}F(x, u_0)\, dx\right]  \\
    &\, + 2e^{-\sigma (t - \tau)} \|\phi_3\|_{L^1} + 2\int^t_\tau e^{\sigma(s - t)} \Gamma_1(\theta_{s}\omega)\, ds +\frac{1}{\sigma} \left(C_6 +\frac{\|g\|^2}{\beta -\delta}\right).
    \end{split}
\end{equation}
Replace the time interval $[\tau, t]$ by $[-t, 0]$ and consider any given tempered set $B \in \msd$. For $(u_{0}(\ctw), v_0(\ctw))\in B(\theta_{-t}\omega)$, we obtain
\begin{equation}\label{416}
    \begin{split}
    &\|v(0, \, \omega, -t, v_0(\theta_{-t}\omega))\|^2+ \left(\alpha+\delta^2-\beta\delta\right)\|u(0, \, \omega, -t, u_0(\ctw) )\|^2 \\[6pt]
    &+ \, \|\nabla u(0, \, \omega, -t, u_0(\ctw) )\|^2\\[6pt]
    &+2\int_{\mathbb{R}^n} (F(x, u(0, \, \omega, -t, u_0 (\ctw))) + \phi_3 (x))\, dx \\[6pt]
    \leq &\, e^{-\sigma t} \left[\|v_0(\theta_{-t}\omega)\|^2+ \left(\alpha+\delta^2-\beta\delta\right) \|u_0(\ctw)\|^2+\|\nabla u_0(\ctw)\|^2 \right] \\[6pt]
    &+ \,2e^{-\sigma t} \left[\int_{\mathbb{R}^n} F(x, u_0(\ctw))\, dx + \|\phi_3\|_{L^1} \right] \\
    &+ 2\int_{-t}^0 e^{\sigma s} \Gamma_1(\theta_{s}\omega)\, ds + \frac{1}{\sigma} \left(C_6 +\frac{\|g\|^2}{\beta -\delta}\right)  \\
    \leq &\, e^{-\sigma t} \left[\|v_0(\theta_{-t}\omega)\|^2+ \left(\alpha+\delta^2-\beta \delta\right) \|u_0(\theta_{-t}\omega)\|^2+\|\nabla u_0(\theta_{-t}\omega)\|^2+2\int_{\mathbb{R}^n}F(x, u_0(\theta_{-t}\omega))\, dx\right]\\
    &+ \, 2e^{-\sigma t}\|\phi_3\|_{L^1} + 2C_5\int^0_{-t} e^{\sigma s + \frac 12 \sigma |s|} r_0(\omega)\, ds+ \frac{1}{\sigma} \left(C_6 +\frac{\|g\|^2}{\beta -\delta}\right) \\
    = &\, e^{-\sigma t} \left[\|v_0(\theta_{-t}\omega)\|^2+ \left(\alpha+\delta^2-\beta\delta\right)\|u_0(\theta_{-t}\omega)\|^2+\|\nabla u_0(\theta_{-t}\omega)\|^2+2\int_{\mathbb{R}^n}F(x, u_0(\theta_{-t}\omega))\, dx\right]  \\
    &+ 2e^{-\sigma t}\|\phi_3\|_{L^1} + \frac{1}{\sigma} \left(4C_5\, r_0 (\omega) + C_6 +\frac{\|g\|^2}{\beta -\delta}\right), \quad  t \geq 0, \;\; \om \in \gw.
    \end{split}
\end{equation}
Note that \eqref{33} and \eqref{34} imply that there is a constant $c = c (C_1, C_2, \phi_1, \phi_2) > 0$ such that
\begin{eqnarray}\label{417}
    \int_{\mathbb{R}^n}F(x, u_0(\ctw))\, dx\leq c \left(1 +\|u_0(\ctw)\|^2 +\|u_0(\ctw)\|^{p}_{L^p}\right).
\end{eqnarray}
For any set $B \in \msd$, which is tempered with respect to the norm of $E$,  since $(u_{0}(\ctw), v_0(\theta_{-t}\omega))\in B(\theta_{-t}\omega)$, there exists a constant $C > 0$ and a finite
$T_B(\omega)>0$ such that for all $t \geq T_B (\omega)$ one has
\begin{equation} \label{418}
    \begin{split}
    &\, e^{-\sigma t} \left[\|v_0(\theta_{-t}\omega)\|^2+\left(\alpha+\delta^2-\beta \delta\right)\|u_0(\ctw)\|^2+\|\nabla u_0(\ctw)\|^2\right] \\[3pt]
    & + 2e^{-\sigma t}\left[ \int_{\mathbb{R}^n}F(x, u_0(\ctw))\, dx + \|\phi_3\|_{L^1}\right] \\[2pt]
    \leq &\, C e^{-\sigma t} \left(1 + \|v_0(\theta_{-t}\omega)\|^2+ \|u_0(\ctw)\|^2_{H^1} +\|u_0(\ctw)\|^p_{L^p}\right) \leq 1.
    \end{split}
\end{equation}
Substitute \eqref{418} into the right-hand side of the last equality of \eqref{416} and note that \eqref{35} implies
$$
    2\int_{\mathbb{R}^n} (F(x, u(0, \, \omega, -t, u_0(\ctw)) + \phi_3 (x))\, dx \geq 2C_3 \|u(0, \, \omega, -t, u_0(\ctw))\|_{L^p}^p.
$$
Then it results in
\begin{equation}\label{420}
    \begin{split}
    &\|v(0, \, \omega, -t, v_0(\theta_{-t}\omega))\|^2+ \left(\alpha+\delta^2-\beta\delta\right)\|u(0, \, \omega, -t, u_0(\ctw))\|^2 \\[7pt]
    &+ \, \|\nabla u(0, \, \omega, -t, u_0(\ctw))\|^2 + 2C_3\|u(0, \, \omega, -t, u_0(\ctw))\|_{L^p}^p \\[3pt]
    \leq &\, 1 + \frac{1}{\sigma} \left(4C_5\, r_0 (\omega) + C_6 +\frac{\|g\|^2}{\beta -\delta}\right).
    \end{split}
\end{equation}
By \eqref{pbj} and \eqref{420}, we conclude that $\Phi (t, \theta_{-t} \omega, B(\theta_{-t} \omega)) \subset K(\omega) = B_E (0, R(\omega))$
 for $t \geq T_B (\omega)$, where the radius of the ball $B_E (0, R(\omega))$ in the space $E$ is
\begin{equation} \label{Kom}
    \begin{split}
    R(\omega) &= \left(\frac{1}{\min \{1, (\alpha + \delta^2 -\beta \delta)\}} \left[1 + \frac{1}{\sigma} \left(4C_5\, r_0 (\omega) + C_6 +\frac{\|g\|^2}{\beta -\delta}\right)\right]\right)^{\frac{1}{2}}  \\
    &+ \left(\frac{1}{2C_3}\left[1 + \frac{1}{\sigma} \left(4C_5\, r_0 (\omega) + C_6 +\frac{\|g\|^2}{\beta -\delta}\right)\right]\right)^{\frac{1}{p}}.
    \end{split}
\end{equation}
Since $r_0 (\omega)$ is a tempered random variable, $K = \{K(\omega)\}_{\omega\in \Omega}\in \msd$. Therefore, $K = \{K(\omega)\}_{\omega\in \Omega}$ is a $\msd$-pullback absorbing set for the cocycle $\Phi$.
\end{proof}

\subsection{\textbf{Tail Estimates}}

Next we conduct uniform estimates on the tail parts of the solutions for large spatial and time variables. These estimates play key roles in proving the pullback asymptotic compactness of the random dynamical systems $\Phi$ associated with the random wave equation (\ref{310}) on the unbounded domain $\mathbb{R}^n.$

\begin{lemma} \label{L42}
 Under the Assumptions \textup{I, II} and \textup{III}, for every $B=\{B(\omega)\}_{\omega\in \Omega} \in \msd, \, 0 < \eta \leq 1$ and $P$-a.e. $\omega\in \Omega$,
there exists $T=T(\omega,B, \eta)>0$ and $V = V (\omega, \eta)\geq 1$ such that the cocycle $\Phi$ associated with the problem \eqref{310} satisfies
\begin{eqnarray}\label{431}
    \| \Phi (t, \theta_{-t}\omega, B(\theta_{-t}\omega))\|_{E(\mathbb{R}^n \backslash B_r)} = \max_{g_0 \in B(\theta_{-t} \omega)} \|\Phi (t, \theta_{-t} \omega, g_0)\chi_{B_r^c}\|_E < \eta,
\end{eqnarray}
for all $t\geq T$ and every $r\geq V$, where $\chi_{B_r^c } (x)$ is the characteristic function of the set $\{x\in \mathbb{R}^n : |x| > r\}$.
\end{lemma}

\begin{proof}
We choose a smooth nondecreasing function $\rho$ such that $0\leq \rho (s) \leq 1$ for all $s \in [0, \infty)$ and
\begin{eqnarray}\label{432}
\rho(s)=\left\{
\begin{array}{l}
0, \qquad if \;\; 0 \leq s <1,\\ \\
1, \qquad if \;\; s >2,
\end{array} \right.
\end{eqnarray}
with $0 \leq \rho'(s) \leq 2$ for $s\geq 0$. Taking the inner product of the second equation of (\ref{310}) with $\rho(|x|^2/r^2)v$ in $L^2(\mathbb{R}^n)$, we get
\begin{equation}\label{433}
    \begin{split}
    &\frac 12\frac{d}{dt}\int_{\mathbb{R}^n}\rho\left(\frac{|x|^2}{r^2}\right)|v|^2\, dx -\delta \int_{\mathbb{R}^n}\rho\left(\frac{|x|^2}{r^2}\right)|v|^2 \, dx
+(\alpha+\delta^2)\int_{\mathbb{R}^n}\rho\left(\frac{|x|^2}{r^2}\right)uv \, dx  \\[1pt]
    &+\, \int_{\mathbb{R}^n}(Au)\rho\left(\frac{|x|^2}{r^2}\right)v \, dx +\int_{\mathbb{R}^n}\rho\left(\frac{|x|^2}{r^2}\right)f(x, u) v \, dx \\[1pt]
    =& \int_{\mathbb{R}^n}\rho\left(\frac{|x|^2}{r^2}\right)(g+2\delta \varepsilon z(\theta_{t}\omega))\, v\, dx -\int_{\mathbb{R}^n}\rho\left(\frac{|x|^2}{r^2}\right) \beta (v+ \varepsilon z(\theta_{t}\omega)-\delta u) v\, dx.
    \end{split}
\end{equation}
Substitute
\begin{equation*} 
    \begin{split}
    &- \int_{\mathbb{R}^n}\rho\left(\frac{|x|^2}{r^2}\right) \beta (v+ \varepsilon z(\theta_{t}\omega)-\delta u)v\, dx \\[3pt]
    \leq - \beta \int_{\mathbb{R}^n}\rho\left(\frac{|x|^2}{r^2}\right)|v|^2&\, dx
    + \beta\delta\int_{\mathbb{R}^n}\rho\left(\frac{|x|^2}{r^2}\right)uv\, dx + \varepsilon \beta\int_{\mathbb{R}^n}\rho\left(\frac{|x|^2}{r^2}\right)| z(\theta_t\omega)||v|\, dx.
    \end{split}
\end{equation*}
into (\ref{433}) to obtain
\begin{equation}\label{435}
    \begin{split}
    &\frac 12\frac{d}{dt}\int_{\mathbb{R}^n}\rho\left(\frac{|x|^2}{r^2}\right)|v|^2\, dx  + (\alpha+\delta^2-\beta\delta)\int_{\mathbb{R}^n}\rho\left(\frac{|x|^2}{r^2} \right)uv\, dx  \\[3pt]
       + &\, (\beta -\delta) \int_{\mathbb{R}^n}\rho\left(\frac{|x|^2}{r^2}\right)|v|^2\, dx + \int_{\mathbb{R}^n}(Au)\rho\left(\frac{|x|^2}{r^2}\right)v\, dx+\int_{\mathbb{R}^n}\rho\left(\frac{|x|^2}{r^2}\right)f(x, u)\,v\, dx  \\[3pt]
    \leq &\, \int_{\mathbb{R}^n}\rho\left(\frac{|x|^2}{r^2}\right)\left(\frac{\delta}{2} |v|^2 + \frac{\varepsilon^2 \beta^2}{2\delta}|z(\theta_t\omega)|^2\right) dx +\int_{\mathbb{R}^n}\rho\left(\frac{|x|^2}{r^2}\right)(g+2\delta \varepsilon z(\theta_t\omega))\,v \, dx.
    \end{split}
\end{equation}
\noindent For the second term on the left-hand side of (\ref{435}), by the first equation of \eqref{310} we have
\begin{equation}\label{2T}
    \begin{split}
    &(\alpha+\delta^2-\beta\delta)\int_{\mathbb{R}^n}\rho\left(\frac{|x|^2} {r^2}\right)uv\, dx \\
    =&\, (\alpha+\delta^2-\beta\delta)\int_{\mathbb{R}^n}\rho\left(\frac{|x|^2}{r^2} \right)u(u_t+\delta u-\varepsilon z(\theta_{t}\omega))\, dx \\
    \geq &\, (\alpha+\delta^2-\beta\delta) \left(\frac 12\frac{d}{dt}\int_{\mathbb{R}^n}\rho\left(\frac{|x|^2}{r^2}\right)|u|^2\, dx +\delta\int_{\mathbb{R}^n}\rho\left(\frac{|x|^2}{r^2}\right)|u|^2\, dx\right) \\
    &\, - (\alpha+\delta^2-\beta\delta)\int_{\mathbb{R}^n}\rho\left(\frac{|x|^2}{r^2}\right) \left(\frac{\delta}{2}|u|^2 + \frac{\varepsilon^2}{2\delta} |z(\ztw)|^2\right) dx.
    \end{split}
\end{equation}
For the fourth term on the left-hand side of (\ref{435}), we have
\begin{align*}
    &\int_{\mathbb{R}^n}(Au)\, \rho\left(\frac{|x|^2}{r^2}\right)v \, dx = \int_{\mathbb{R}^n}(Au)\, \rho\left(\frac{|x|^2}{r^2}\right)(u_t+\delta u- \varepsilon z(\theta_{t}\omega)) \, dx \\[3pt]
    =&\, \int_{\mathbb{R}^n}(\nabla u)\nabla\left(\rho\left(\frac{|x|^2}{r^2}\right)(u_t+\delta u-\varepsilon z(\theta_{t}\omega))\right) dx \\[3pt]
    =&\, \int_{\mathbb{R}^n}(\nabla u)\left(\frac{2x}{r^2}\rho'\left(\frac{|x|^2}{r^2}\right)(u_t+\delta u-\varepsilon z(\theta_{t}\omega)) + \rho\left(\frac{|x|^2}{r^2}\right)\nabla(u_t+\delta u- \varepsilon z(\theta_{t}\omega))\right) dx  \\[3pt]
    =&\, \int_{\mathbb{R}^n}(\nabla u)\frac{2x}{r^2}\rho'\left(\frac{|x|^2}{r^2}\right)v\, dx+\int_{\mathbb{R}^n}(\nabla u)\rho\left(\frac{|x|^2}{r^2}\right)\nabla(u_t+\delta u-\varepsilon z(\theta_{t}\omega))\, dx  \\[3pt]
    =&\, \int_{\mathbb{R}^n}(\nabla u)\frac{2x}{r^2}\rho'\left(\frac{|x|^2}{r^2}\right)v\, dx+\frac 12\frac{d}{dt}\int_{\mathbb{R}^n}\rho\left(\frac{|x|^2}{r^2}\right)|\nabla u|^2\, dx
+\delta\int_{\mathbb{R}^n}\rho\left(\frac{|x|^2}{r^2}\right)|\nabla u|^2 \, dx   \\[3pt]
    &\, -\varepsilon \int_{\mathbb{R}^n}\rho\left(\frac{|x|^2}{r^2}\right)(\nabla u) (\nabla z(\theta_{t}\omega)) \, dx.
\end{align*}
Since $0 \leq \rho' (s) \leq 2$, it follows that
\begin{equation} \label{436}
    \begin{split}
    &\int_{\mathbb{R}^n}(Au)\, \rho\left(\frac{|x|^2}{r^2}\right)v\, dx \geq  -\, \int_{r\leq |x|\leq \sqrt{2}r}\frac{4|x|}{r^2}|(\nabla u)v|dx+\frac 12\frac{d}{dt}\int_{\mathbb{R}^n}\rho\left(\frac{|x|^2}{r^2}\right)|\nabla u|^2\, dx \\
&\, +\delta\int_{\mathbb{R}^n}\rho\left(\frac{|x|^2}{r^2}\right)|\nabla u|^2 \, dx - \varepsilon\int_{\mathbb{R}^n}\rho\left(\frac{|x|^2}{r^2}\right)(\nabla u) (\nabla z(\theta_{t}\omega))\, dx  \\
    &\geq -\, \frac{2\sqrt{2}}{r}  \int_{r \leq |x| \leq \sqrt{2}r}  (|\nabla u|^2+ |v |^2)\, dx  +\frac 12\frac{d}{dt}\int_{\mathbb{R}^n}\rho\left(\frac{|x|^2}{r^2}\right)|\nabla u|^2\, dx \\
&\, +\frac{\delta}{2}\int_{\mathbb{R}^n}\rho\left(\frac{|x|^2}{r^2}\right)|\nabla u|^2 \, dx -\frac{\varepsilon^2}{2\delta}\int_{\mathbb{R}^n}\rho\left(\frac{|x|^2}{r^2}\right) |\nabla z(\theta_{t}\omega))|^2\, dx.
    \end{split}
\end{equation}
For the fifth term on the left-hand side of (\ref{435}), by \eqref{33}-\eqref{35}, we have
\begin{equation}\label{437}
    \begin{split}
    &\, \int_{\mathbb{R}^n}\rho\left(\frac{|x|^2}{r^2}\right)f(x, u)v \, dx =\int_{\mathbb{R}^n}\rho\left(\frac{|x|^2}{r^2}\right)f(x, u)(u_t+\delta u- \varepsilon z(\theta_t \omega) ) \, dx \\[2pt]
    \geq &\, \frac{d}{dt}\int_{\mathbb{R}^n}\rho\left(\frac{|x|^2}{r^2}\right)F(x, u)\, dx +\delta \int_{\mathbb{R}^n}\rho\left(\frac{|x|^2}{r^2}\right) (C_2 F(x, u) + \phi_2(x))\, dx  \\[2pt]
    &\, -\varepsilon C_1\int_{\mathbb{R}^n}\rho\left(\frac{|x|^2}{r^2}\right)|u|^{p -1}|z(\theta_t\omega)|dx
-\varepsilon\int_{\mathbb{R}^n}\rho\left(\frac{|x|^2}{r^2}\right)|\phi_1(x)| |z(\theta_t\omega)|dx  \\[2pt]
    \geq &\, \frac{d}{dt}\int_{\mathbb{R}^n}\rho\left(\frac{|x|^2}{r^2}\right)F(x, u)\, dx +\delta \int_{\mathbb{R}^n}\rho\left(\frac{|x|^2}{r^2}\right) (C_2 F(x, u) + \phi_2(x))\, dx \\[2pt]
    &\, -\varepsilon C_1\frac{p-1}{p}\int_{\mathbb{R}^n}\rho\left(\frac{|x|^2}{r^2}\right)|u|^{p}\, dx-\frac{\varepsilon C_1}{p}\int_{\mathbb{R}^n}\rho\left(\frac{|x|^2}{r^2}\right)|z(\theta_t\omega)|^{p}\, dx  \\[2pt]
    &\, -\frac{\varepsilon}{2}\int_{\mathbb{R}^n}\rho\left(\frac{|x|^2}{r^2}\right) |z(\theta_t\omega)|^2dx -\frac{\varepsilon}{2}\int_{\mathbb{R}^n}\rho\left(\frac{|x|^2}{r^2}\right)|\phi_1|^2dx \\[2pt]
    \geq &\, \frac{d}{dt}\int_{\mathbb{R}^n}\rho\left(\frac{|x|^2}{r^2}\right)F(x, u)\, dx +\left(\delta C_2-\frac{\varepsilon C_1(p-1)}{C_3\, p}\right)\int_{\mathbb{R}^n}\rho
\left(\frac{|x|^2}{r^2}\right)F(x, u)\, dx  \\[2pt]
    &\,  -\, C_7\int_{\mathbb{R}^n}\rho\left(\frac{|x|^2}{r^2}\right)(|z(\theta_t\omega)|^{p} +|z(\theta_t\omega)|^2)dx  \\[2pt]
    &\, - \int_{\mathbb{R}^n}\rho\left(\frac{|x|^2}{r^2}\right)\left(\frac{\varepsilon C_1(p-1)}{C_3\, p}|\phi_3|-\delta|\phi_2|+\frac \varepsilon2 |\phi_1|^2\right) dx,
    \end{split}
\end{equation}
where $C_7 = C_7 (\varepsilon) > 0$ is a constant and we used the H\"{o}lder inequality in the second inequality as well as
$$
    -\varepsilon C_1\frac{p-1}{p}\int_{\mathbb{R}^n}\rho\left(\frac{|x|^2}{r^2}\right)|u|^{p}\, dx \geq -\frac{\varepsilon C_1(p-1)}{C_3p}\int_{\mathbb{R}^n}\rho\left(\frac{|x|^2}{r^2}\right) (F(x, u) + \phi_3 (x))\, dx
$$
in the third inequality. For the last term on the right-hand side of (\ref{435}), we have
\begin{equation} \label{rT}
    \begin{split}
    &\int_{\mathbb{R}^n}\rho\left(\frac{|x|^2}{r^2}\right)(g+2\delta \varepsilon z(\theta_t\omega))\,v\, dx  \\[3pt]
    \leq &\, \frac{1}{2(\beta -\delta)}\int_{\mathbb{R}^n}\rho\left(\frac{|x|^2}{r^2}\right) (|g|+2\delta \varepsilon |z(\theta_t\omega)|)^2\, dx+
\frac{\beta -\delta}{2}\int_{\mathbb{R}^n}\rho\left(\frac{|x|^2}{r^2}\right)|v|^2\, dx.
    \end{split}
\end{equation}
Now substitute \eqref{2T}-\eqref{rT} into \eqref{435}, we obtain
\begin{equation}\label{439}
    \begin{split}
    &\, \frac 12\frac{d}{dt}\int_{\mathbb{R}^n}\rho\left(\frac{|x|^2}{r^2}\right) \left(|v|^2+(\alpha+\delta^2-\beta \delta)|u|^2+|\nabla u|^2+2F(x, u)\right)dx \\[6pt]
    &\, +\frac{\delta}{2}\int_{\mathbb{R}^n}\rho\left(\frac{|x|^2}{r^2}\right) |v|^2\,dx +\frac{\delta}{2}\int_{\mathbb{R}^n}\rho\left(\frac{|x|^2}{r^2}\right)
((\alpha+\delta^2-\beta \delta)|u|^2+|\nabla u|^2)\, dx \\[6pt]
    &\, + \left(\delta C_2-\frac{\varepsilon C_1(p-1)}{C_3\, p}\right) \int_{\mathbb{R}^n}\rho\left(\frac{|x|^2}{r^2}\right) F(x, u)\, dx\\[6pt]
    \leq &\,  \frac{\varepsilon^2}{2\delta} \int_{\mathbb{R}^n}\rho\left(\frac{|x|^2}{r^2}\right) (|\nabla z(\theta_t\omega)|^2+(\alpha+\delta^2-\beta \delta) |z(\theta_t\omega)|^2+\beta^2 |z(\theta_t\omega)|^2)\,dx \\[6pt]
    &\, +\, \frac{2\sqrt{2}}{r}  \int_{r \leq |x| \leq \sqrt{2}r}  (|\nabla u|^2+ |v |^2)\, dx +C_7\int_{\mathbb{R}^n}\rho\left(\frac{|x|^2}{r^2}\right) (|z(\theta_t\omega)|^{p} +|z(\theta_t\omega)|^2)\,dx \\[6pt]
    &\, +\frac{1}{\beta -\delta}\int_{\mathbb{R}^n}\rho\left(\frac{|x|^2}{r^2}\right) (|g|^2+4\delta^2\varepsilon^2|z(\theta_t\omega)|^2)\, dx \\[6pt]
    &\, + \int_{\mathbb{R}^n}\rho\left(\frac{|x|^2}{r^2}\right) \left(\frac{\varepsilon C_1(p-1)}{C_3\, p}|\phi_3|-\delta|\phi_2|+\frac \varepsilon 2|\phi_1|^2\right) dx \\[6pt]
    \leq &\, \frac{2\sqrt{2}}{r} \int_{r \leq |x| \leq \sqrt{2}r}  (|\nabla u|^2+ |v |^2)\, dx  \\[6pt]
    &\, + C_8\,\int_{\mathbb{R}^n}\rho\left(\frac{|x|^2}{r^2}\right) (|\nabla z(\theta_t\omega)|^2+|z(\theta_t\omega)|^2+|z(\theta_t\omega)|^p)\, dx \\[6pt]
    &\, + C_9\int_{\mathbb{R}^n}\rho\left(\frac{|x|^2}{r^2}\right)(|g|^2+ |\phi_1|^2+|\phi_2|+|\phi_3|)\, dx,
    \end{split}
\end{equation}
where $C_8 = C_8 (\varepsilon) > 0$ and $C_9 = C_9 (\varepsilon) > 0$ are constants. In grouping the coefficients of the terms $\intr \rho (|x|^2/r^2)|v|^2\, dx$ appearing on both sides of \eqref{439}, we used \eqref{40} to get $(\beta - \delta)/2 = (\beta - 3\delta)/2 + \delta \geq \delta$.

Since $g, \phi_1\in L^2(\mathbb{R}^n)$ and $\phi_2, \phi_3 \in L^1(\mathbb{R}^n)$, for any given $\eta> 0,$ there exists $K_0 =
K_0 (\eta)\geq 1$ such that for all $r\geq K_0$,
\begin{equation}\label{440}
    \begin{split}
    &C_9 \int_{\mathbb{R}^n}\rho\left(\frac{|x|^2}{r^2}\right) \left(|g|^2 + |\phi_1|^2+|\phi_2| +|\phi_3|\right) dx  + 2\sigma \intr \rxr |\phi_3 (x)|\, dx \\[4pt]
    &\leq C_9 \int_{|x|\geq r} \left(|g|^2 + |\phi_1|^2+|\phi_2|+|\phi_3|\right) dx + 2\sigma \intr |\phi_3 (x)|\, dx \leq \eta.
    \end{split}
\end{equation}
By \eqref{sigma} and (\ref{439})-(\ref{440}), there exists $K_1 = K_1 (\eta) \geq 1$
such that for all $r\geq K_1$,
\begin{equation}\label{441}
    \begin{split}
    &\, \frac{d}{dt}\int_{\mathbb{R}^n}\rho\left(\frac{|x|^2}{r^2}\right)\left(|v|^2 +(\alpha+\delta^2-\beta\delta)|u|^2+|\nabla u|^2+2(F(x, u) + \phi_3)\right) dx  \\
    &\, + \sigma\int_{\mathbb{R}^n}\rho\left(\frac{|x|^2}{r^2}\right) \left(|v|^2+(\alpha+\delta^2-\beta\delta)|u|^2+|\nabla u|^2+2(F(x, u) + \phi_3)\right) dx  \\
    &\leq \eta \left(1 + \int_{r \leq |x| \leq \sqrt{2}r}  (|\nabla u|^2+ |v |^2)\, dx\right) \\
    &\, + C_8 \int_{|x| \geq r} \left(|\nabla z(\theta_t\omega)|^2+|z(\theta_t\omega)|^2+|z(\theta_t\omega)|^p \right) dx.
    \end{split}
\end{equation}
Integrating the above inequality \eqref{441} over the time interval $(-t, 0)$, we see that for any $t > 0, \om \in \gw$ and $r\geq K_1$,
\begin{equation}\label{E0}
    \begin{split}
    &\int_{\mathbb{R}^n}\rho\left(\frac{|x|^2}{r^2}\right)(|v(0, \omega, -t, v_{0}(\theta_{-t}\omega))|^2 + \left(\alpha+\delta^2-\beta\delta\right) |u(0, \omega, -t, u_{0}(\theta_{-t}\omega))|^2)\, dx  \\[4pt]
    & +\int_{\mathbb{R}^n}\rho\left(\frac{|x|^2}{r^2}\right) |\nabla u(0, \omega, -t, u_{0}(\theta_{-t}\omega))|^2dx \\[4pt]
    & + 2 \intr \rxr (F(x, u(0, \omega, -t, u_{0}(\theta_{-t}\omega)) + \phi_3 (x))\,dx \\[4pt]
    \leq &\, e^{-\sigma t} \int_{\mathbb{R}^n} \rxr (|v_0(\theta_{-t}\omega)|^2 + (\alpha+\delta^2-\beta\delta) |u_0(\theta_{-t}\omega)|^2 +  |\nabla u_0(\theta_{-t}\omega)|^2 )\, dx \\[6pt]
    &\, + 2e^{-\sigma t}\intr \rxr (F(x, u_0(\theta_{-t}\omega)) + \phi_3 (x))\, dx  + \frac{\eta}{\sigma} \\[4pt]
    &\, + \eta \int^0_{-t} e^{\sigma s} \int_{r \leq |x| \leq \sqrt{2}r} (|\nabla u(s, \omega, -t, u_{0} (\theta_{-t}\omega))|^2+ |v(s, \omega, -t, v_{0}(\theta_{-t}\omega))|^2)\, dx\, ds \\[4pt]
    &+C_8 \int^0_{-t} e^{\sigma s}\int_{|x| \geq r} (|\nabla z(\theta_{s}\omega)|^2+|z(\theta_{s}\omega)|^2+|z(\theta_{s} \omega)|^p)\, dx\,ds.
    \end{split}
\end{equation}
Next we conduct estimate of the terms on the right-hand side of in
(\ref{E0}). For the first two terms we have

\begin{equation}\label{444}
    \begin{split}
    &\, e^{-\sigma t} \int_{\mathbb{R}^n}\rho\left(\frac{|x|^2}{r^2}\right)(|v_0(\theta_{-t}\omega))|^2 + (\alpha+\delta^2-\beta\delta) |u_0(\theta_{-t}\omega))|^2 +  |\nabla u_0(\theta_{-t}\omega)|^2 )\, dx \\
    &\, + 2e^{-\sigma t}\intr \rxr (F(x, u_0(\theta_{-t}\omega)) + \phi_3 (x))\, dx \\
    \leq &\, e^{-\sigma t} \int_{\mathbb{R}^n} \left(|v_0(\theta_{-t}\omega)|^2 + (\alpha+\delta^2-\beta\delta) |u_0(\theta_{-t}\omega)|^2 +|\nabla u_0(\theta_{-t}\omega)|^2\right) dx \\
    & + 2e^{-\sigma t} \int_{\mathbb{R}^n} \left[\frac{1}{C_2}\left(C_1 |u_0(\theta_{-t}\omega)|^p + |u_0(\theta_{-t}\omega)| |\phi_1 (x)| + |\phi_2 (x)| \right) + |\phi_3 (x)| \right] dx  \\[6pt]
    \leq &\, e^{-\sigma t} \left(\|v_0(\theta_{-t}\omega)\|^2 +(\alpha+\delta^2-\beta\delta)\| u_0(\theta_{-t}\omega)\|^2+ \|\nabla u_0(\theta_{-t}\omega)\|^2\right)  \\[6pt]
    &+\, 2e^{-\sigma t} \left[\frac{1}{C_2} \left(\|u_0(\theta_{-t}\omega)\|^2 + C_1 \|u_0(\theta_{-t}\omega)\|^p_{L^p} + \|\phi_1 \|^2 + \|\phi_2 \|_{L^1}\right) + \|\phi_3 \|_{L^1}\right]  \\[6pt]
    \leq &\, C_{10} \, e^{-\sigma t} \left(\|v_0(\theta_{-t}\omega)\|^2 + \| u_0(\theta_{-t}\omega)\|^2+ \|\nabla u_0(\theta_{-t}\omega)\|^2 +\|u_0(\theta_{-t}\omega)\|_{L^p}^p \right)  \\[8pt]
    & +\, C_{10}\, e^{-\sigma t} \left(\|\phi_1 \|^2 + \|\phi_2 \|_{L^1} + \|\phi_3 \|_{L^1}\right) < \eta, \quad \textup{for all} \;\;  t \geq T_1.
    \end{split}
\end{equation}
where $C_{10} > 0$ and $T_1=T_1 (B, \omega, \eta) > 0$ are constants, and the last step follows from the tempered property of $B \in \msd$.

Note that $z(\theta_t \omega) = \sum^m_{j=1} h_j z_j (\theta_t\omega_j )$ and $h_j\in H^2(\mathbb{R}^n)\cap W^{2,p}(\mathbb{R}^n).$ Thus there is a constant $K_2 =K_2 (\omega,
\eta) \geq 1$ such that for all $r\geq K_2$,
\begin{equation}\label{446}
     \max_{1 \leq j \leq m} \int_{|x|\geq r} \left(|h_j(x)|^2+|h_j(x)|^p+|\nabla h_j(x)|^2\right) dx \leq \frac{\sigma\, \eta}{2C_8\, r_0 (\omega)},
\end{equation}
where $r_0 (\omega)$ is the tempered function in \eqref{39}. By \eqref{39} and (\ref{446}) we obtain the estimate of the last integral term in \eqref{E0},
\begin{equation}\label{E2}
    \begin{split}
    & C_8 \int^0_{-t}e^{\sigma s}\int_{|x| \geq r} \left(|\nabla z(\theta_{s}\omega)|^2+|z(\theta_{s}\omega)|^2+|z(\theta_{s}\omega)|^p \right) dx\, ds \\
    \leq &\,  C_8 \int^0_{-t} e^{\sigma s} \, \sum^m_{j=1}\int_{|x|\geq r} \left(|\nabla h_j|^2
|z_j(\theta_{s}\omega_j)|^2+|h_j|^2|z_j(\theta_{s}\omega_j)|^2+|h_j|^p |z_j (\theta_{s}\omega_j)|^p \right) dx\, ds   \\
\leq &\, \frac{\sigma \eta}{2r_0 (\omega)}\int^0_{-t} e^{\sigma s}\, \sum^m_{j=1}(|z_j(\theta_{s}\omega_j)|^2 +|z_j(\theta_{s}\omega_j)|^p)\,ds \leq \frac{\sigma \eta}{2r_0 (\omega)}\int^0_{-t} e^{\frac{1}{2} \sigma s} \, r_0 (\omega) \, ds < \eta.
    \end{split}
\end{equation}

Finally we estimate the third integral term on the right-hand side
of (\ref{E0}). Applying the Gronwall inequality to \eqref{sgre}
while taking the spatial integral over the region $r \leq |x| \leq
\sqrt{2} r$, with \eqref{sigma} in mind, we can get

\begin{equation} \label{rs2r}
    \begin{split}
     &\int_{-t}^0 e^{\sigma s} \int_{r \leq |x| \leq \sqrt{2}r} (|\nabla u(s, \omega, -t, u_{0} (\theta_{-t}\omega))|^2 + |v(s, \omega, -t, v_{0}(\theta_{-t}\omega))|^2)\, dx\, ds \\[5pt]
    \leq &\,  e^{-\sigma (s + t)} \left(\| v_0 (\theta_{-t} \omega)\|^2 + (\alpha + \delta^2 - \beta \delta) \|u_0 (\theta_{-t} \omega)\|^2 + \|\nabla u_0 (\theta_{-t} \omega)\|^2\right) \\[4pt]
    &\, + 2e^{-\sigma (s + t)} \int_{\mathbb{R}^n} \left(F(x, u_0 (\theta_{-t} \omega)) + \phi_3 (x)\right)\, dx  \\
     &\, +\, \frac{1}{\sigma} \left(C_6 + \frac{1}{\beta - \delta} \|g\|^2\right) + 2 \int_{-t}^s e^{- \sigma (s - \tau)} \Gamma_2 (\theta_\tau \omega)\, d\tau.
    \end{split}
\end{equation}
where by \eqref{446} and \eqref{39}, similar to \eqref{413} we see that for $r \geq K_2$,
\begin{gather*}
    \Gamma_2 (\theta_t \, \omega) = C_0  \int_{r \leq |x| \leq \sqrt{2}r} \left(|z(\theta_t \omega)|^2+ |\nabla z(\theta_t \omega)|^2 +|z(\theta_t \omega)|^{p} \right)\, dx   \\
    \leq C_0 \, \sum^m_{j=1}\int_{|x|\geq r} \left(|\nabla h_j|^2 |z_j(\theta_{t} \omega_j)|^2+|h_j|^2|z_j(\theta_{t} \omega_j)|^2+|h_j|^p |z_j (\theta_{t} \omega_j)|^p \right) dx \\
    \leq \frac{C_0 \, \sigma \eta}{2C_8\, r_0 (\omega)} \sum^m_{j=1} \left(|z_j(\theta_{t}\omega_j)|^2 +|z_j(\theta_{t}\omega_j)|^p\right) \leq \frac{C_0 \, \sigma \eta}{2C_8}\, e^{(\sigma/2) |t|}.
\end{gather*}
Based on \eqref{rs2r} and the above inequality as well as the
tempered property of $B\in \msd$, there exists $T_2 = T_2 (B,
\omega, \eta) > 0$ such that
\begin{equation}\label{E3}
    \begin{split}
    & \int^0_{-t} e^{\sigma s} \int_{r \leq |x| \leq \sqrt{2}r} \left(|\nabla u(s, \omega, -t, u_{0} (\theta_{-t}\omega))|^2+|v(s, \omega, -t, v_{0}(\theta_{-t}\omega))|^2 \right) dx\, ds \\
    \leq &\, C t \, e^{-\sigma t} \left[\| (u_0, v_0) (\theta_{-t} \omega)\|^2 + \|\nabla u_0 (\theta_{-t} \omega)\|^2 + \int_{\mathbb{R}^n} (F(x, u_0 (\theta_{-t} \omega))+ \phi_3 (x))\, dx\right] \\
    &\, + \frac{1}{\sigma} \left(C_6 + \frac{1}{\beta - \delta}\|g \|^2\right) + \frac{C_0\, \sigma \eta}{C_8} \int_{-t}^0  \int_{-t}^s e^{\sigma \tau -\frac{1}{2}\sigma \tau}\, d\tau \, ds \\[5pt]
    \leq &\,C t \, e^{-\sigma t} \left(\| (u_0, v_0) (\theta_{-t} \omega)\|^2 + \|\nabla u_0 (\theta_{-t} \omega)\|^2 + \|\phi_3 \|_{L^1} \right) \\[6pt]
    &\, + \frac{C}{C_2} t \, e^{-\sigma t} \left( C_1 \|u_0 (\ctw)\|_{L^p}^p + \|u_0 (\ctw)\|^2 + \|\phi_1\|^2 + \|\phi_2 \|_{L^1}\right) \\[3pt]
    &\, + \frac{1}{\sigma} \left(C_6 + \frac{1}{\beta - \delta} \|g \|^2\right) + \frac{4C_0\eta}{C_8 \sigma} \leq M, \quad  \textup{for all} \;\; t \geq T_2,
    \end{split}
\end{equation}
where the constant
$$
    M = 1 + \frac{1}{\sigma} \left(C_6 + \frac{1}{\beta - \delta} \|g\|^2 + \frac{4C_0\eta}{C_8 \sigma}\right).
$$
Now assemble all these estimates and substitute \eqref{444}, \eqref{E2} and \eqref{E3} into \eqref{E0}. It shows that for any $B \in \msd, 0 < \eta \leq 1$ and a.e. $\om \in \gw$, as long as $r\geq V = \max\, \{K_0, K_1, K_2\}$ and $t \geq \max \{T_1, T_2\}$ one has

\begin{equation} \label{Lts}
    \begin{split}
    &\, \int_{|x|\geq \sqrt{2}r} (|v(0, \omega, -t, v_{0}(\theta_{-t}\omega))|^2 + (\alpha + \delta^2 - \beta \delta)|u(0, \omega, -t, u_{0}(\theta_{-t}\omega))|^2)\, dx \\
    &\, + \int_{|x|\geq \sqrt{2}r} (|\nabla u(0, \omega, -t, u_{0}(\theta_{-t}\omega))|^2 + |u(0, \omega, -t, u_{0}(\theta_{-t}\omega))|^p)\, dx \\
    \leq &\, \int_{\mathbb{R}^n}  \rho\left(\frac{|x|^2}{r^2}\right) \left(|v(0, \omega, -t, v_{0}(\theta_{-t}\omega))|^2 +(\alpha+\delta^2-\beta\delta) |u(0, \omega, -t, u_{0}(\theta_{-t}\omega))|^2)\right) dx  \\[5pt]
    &\, + \intr \rxr |\nabla u(0, \omega, -t, u_{0}(\theta_{-t}\omega))|^2 \, dx  \\
    &\, + \frac{2}{C_3} \intr \rxr (F(x, u(0, \omega,  -t, u_{0}(\theta_{-t}\omega)) + \phi_3)\, dx \leq \left(1 + \frac{1}{C_3}\right) (2 + M)\eta.
    \end{split}
\end{equation}
By \eqref{Enorm}, the above inequality \eqref{Lts} demonstrates that for any $B \in \msd$ and a.e. $\om \in \gw$, it holds that
\begin{equation} \label{Phts}
    \begin{split}
    &\interleave \Phi (t, \ctw, B(\ctw))\interleave_{E(\mbR \backslash B_{R})} = \max_{g_0 \in B(\ctw)} \|\Phi (t, \ctw, g_0)\|_{E(\mbR \backslash B_{R})} \\
    \leq &\, \left[\left(1 + \frac{1}{C_3}\right) (2 + M)\eta\right]^{1/2} + \left[\left(1 + \frac{1}{C_3}\right) (2 + M)\eta \right]^{1/p},
    \end{split}
\end{equation}
where $R = \sqrt{2}r$. \eqref{Phts} implies (\ref{431}) according to \eqref{pbj} simply by renaming $r$ to be $R$ and $\eta$ to be $((1 + 1/C_3)(2+M)\eta)^{1/2} + ((1 + 1/C_3)(2+M)\eta)^{1/p}$.  The proof is completed.
\end{proof}

\section{\large{\bf{Pullback Asymptotic Compactness in Space $H^1 (\mbR) \times L^2 (\mbR)$}}}

In this section, we shall prove the pullback asymptotic compactness of the random dynamical sytem $\Phi$ associated with the stochastic wave equation \eqref{22} formulated into \eqref{310} and reach the existence of a random attractor for this random dynamical system. 

\begin{lemma}  \label{L51}
The following statements hold for $L^p (\mbR), \, 1 \leq p < \infty$.

\textup{1)} Let $\{\psi_m\}$ be a sequence and $\psi$ be a function
in $L^p (\mbR)$ such that $\|\psi_m - \psi \|_{L^p} \to 0$ as $m \to
\infty$. Then there exists a subsequence $\{\psi_{m_k}\}$ such that
$$
    \lim_{k\to \infty}  \psi_{m_k} (x) = \psi (x), \quad \textup{a.e. on} \;\; \mbR.
$$

\textup{2)} If a sequence $\{\psi_m\}$ and a function $\psi$ in $L^p (\mbR)$ satisfy the following two conditions\textup{:}
\begin{equation} \label{S2}
    \lim_{m\to \infty}  \psi_{m} (x) = \psi (x), \;\, \textup{a.e. on} \;\; \mbR \quad \textup{and} \quad  \psi_m \;\, \textup{is bounded in} \;\, L^p(\mbR),
\end{equation}
then  $\psi_m \to \psi$ weakly in $L^p (\mbR)$, as $m \to \infty$.

\textup{3)} For $1 < p < \infty$, if a sequence $\{\psi_m\}$ and a function $\psi$ in $L^p (\mbR)$ satisfy the following two conditions\textup{:}
\begin{equation} \label{S3}
    \lim_{m\to \infty}  \psi_{m} (x) = \psi (x), \;\, \textup{a.e. on} \;\; \mbR \quad \textup{and} \quad \lim_{m\to \infty} \|\psi_m \|_{L^p} = \|\psi \|_{L^p},
\end{equation}
then $\lim_{m\to \infty} \|\psi_m -\psi\|_{L^p} = 0$.
\end{lemma}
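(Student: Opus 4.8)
The plan is to handle the three assertions in sequence, each by reduction to a classical measure-theoretic fact. For statement 1), since $\|\psi_m - \psi\|_{L^p} \to 0$ I would extract a subsequence $\{\psi_{m_k}\}$ with $\|\psi_{m_k} - \psi\|_{L^p}^p \le 2^{-k}$; then $\sum_k \int_{\mbR} |\psi_{m_k}(x) - \psi(x)|^p\, dx < \infty$, so the monotone convergence theorem forces the nonnegative series $\sum_k |\psi_{m_k}(x) - \psi(x)|^p$ to be finite for a.e. $x$, whence its general term tends to $0$ and $\psi_{m_k}(x) \to \psi(x)$ a.e. This is exactly the completeness argument behind Riesz--Fischer.

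For statement 2), working in the reflexive range $1 < p < \infty$ (which covers the application, where $p > 2$), I would use that the bounded sequence $\{\psi_m\}$ in $L^p(\mbR)$ has, along any subsequence, a further subsequence $\psi_{m_k} \rightharpoonup \chi$ weakly in $L^p(\mbR)$, and then show $\chi = \psi$. Fix a bounded measurable set $A \subset \mbR$ and $\varepsilon > 0$. By Egorov's theorem there is $A_\varepsilon \subset A$ with $|A \setminus A_\varepsilon| < \varepsilon$ on which $\psi_{m_k} \to \psi$ uniformly, so $\int_{A_\varepsilon} \psi_{m_k}\, dx \to \int_{A_\varepsilon} \psi\, dx$; since $\chi_{A_\varepsilon} \in L^{p'}(\mbR)$, weak convergence also gives $\int_{A_\varepsilon} \psi_{m_k}\, dx \to \int_{A_\varepsilon} \chi\, dx$, hence $\int_{A_\varepsilon} \psi\, dx = \int_{A_\varepsilon} \chi\, dx$. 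Letting $\varepsilon \to 0$ and invoking the absolute continuity of the Lebesgue integral for the locally integrable functions $\psi$ and $\chi$ yields $\int_A \psi\, dx = \int_A \chi\, dx$ for every bounded measurable $A$, so $\chi = \psi$ a.e. Because every subsequence of the bounded sequence $\{\psi_m\}$ then has a further subsequence converging weakly to the same limit $\psi$, the whole sequence converges weakly to $\psi$.

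For statement 3), I would argue directly with Fatou's lemma rather than invoking uniform convexity. Using the elementary convexity bound $|a-b|^p \le 2^{p-1}(|a|^p + |b|^p)$, the functions
\[
    g_m(x) := 2^{p-1}\bigl(|\psi_m(x)|^p + |\psi(x)|^p\bigr) - |\psi_m(x) - \psi(x)|^p
\]
are nonnegative, and by the a.e. convergence $\psi_m \to \psi$ they satisfy $g_m(x) \to 2^p |\psi(x)|^p$ a.e. Applying Fatou's lemma to $\{g_m\}$ and using the hypothesis $\|\psi_m\|_{L^p} \to \|\psi\|_{L^p}$,
\[
    2^p \|\psi\|_{L^p}^p \;\le\; \liminf_{m \to \infty} \int_{\mbR} g_m\, dx \;=\; 2^p \|\psi\|_{L^p}^p - \limsup_{m \to \infty} \|\psi_m - \psi\|_{L^p}^p ,
\]
so $\limsup_{m} \|\psi_m - \psi\|_{L^p}^p \le 0$, i.e. $\|\psi_m - \psi\|_{L^p} \to 0$. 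Equivalently, one could combine 2) --- applicable since $\|\psi_m\| \to \|\psi\|$ makes $\{\psi_m\}$ bounded --- with the Radon--Riesz property of the uniformly convex space $L^p$, $1 < p < \infty$.

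Statements 1) and 3) are short once the correct auxiliary sequences are written down; the only point demanding genuine care is the identification of the weak limit in 2), where a.e.\ convergence must be reconciled with weak convergence, and Egorov's theorem on sets of finite measure together with the absolute continuity of the integral is precisely the bridge. I note that the case $p = 1$ of 2) is in fact delicate, since boundedness alone does not force weak $L^1$-convergence without equi-integrability; this causes no difficulty for the present purposes, where the exponent satisfies $p > 2$.
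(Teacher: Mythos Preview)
Your argument is correct. For items 1) and 3) you supply explicit proofs where the paper simply invokes the standard Riesz--Fischer construction and a textbook reference (Brezis, Chapter 4); your Fatou-lemma argument for 3) is in fact the standard Brezis--Lieb computation and works cleanly for all $1 \le p < \infty$.

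The genuine divergence is in item 2). The paper extracts a weak limit $\varphi$ by reflexivity, then applies Mazur's lemma to obtain convex combinations $\zeta_m \in \textup{conv}\{\psi_m, \psi_{m+1}, \dots\}$ converging strongly to $\varphi$; since each $\zeta_m$ is a finite convex combination of tail elements and $\psi_m \to \psi$ a.e., the paper argues $\zeta_m \to \psi$ a.e., while item 1) gives a subsequence $\zeta_{m_k} \to \varphi$ a.e., forcing $\varphi = \psi$. Your route bypasses Mazur entirely: you test the weak limit against characteristic functions of bounded sets and use Egorov's theorem to identify it with $\psi$, then invoke the subsequence-of-subsequence principle to upgrade to full-sequence convergence. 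Your approach is arguably more direct and self-contained (it avoids the somewhat delicate step of inferring a.e.\ convergence of the Mazur combinations from a.e.\ convergence of the $\psi_m$); the paper's approach, on the other hand, makes no use of the $\sigma$-finiteness of the domain beyond reflexivity. Your explicit acknowledgement that the $p=1$ case of item 2) fails without equi-integrability is a point the paper's proof also silently restricts to $1 < p < \infty$.
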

\begin{proof}
    Since $\mbR$ with the Lebesgue measure is a $\sigma$-finite measure space, the first item is a standard result in Real and Functional Analysis.

    For the second item, since $L^p (\mbR)$ is a reflexive Banach space for $1 < p < \infty$, the boundedness of $\{\psi_m\}$ in $L^p (\mbR)$ implies that there is $\varphi \in L^p (\mbR)$ such that $\psi_m \to \varphi$ weakly as $m \to \infty$. By Mazur's lemma, this weak convergence implies there exists a sequence $\{\zeta_m\} \subset L^p (\mbR)$ such that
\begin{equation} \;\label{Mz}
    \zeta_m \in \textup{conv} \, \{\psi_m, \psi_{m+1}, \cdots \} \;\, \textup{and} \;\, \zeta_m \to \varphi \;\, \textup{strongly in}\;\, L^p (\mbR).
\end{equation}
It follows from the condition $\psi_m \to \psi$ a.e. and $\zeta_m \in \textup{conv}\, (\bigcup_{i=m}^\infty \psi_i)$ that
\begin{equation} \label{Zs}
    \zeta_m \to \psi \;\;  \textup{a.e. in} \;\, \mbR.
\end{equation}
On the other hand, by the first statement in this lemma, the strong
convergence in \eqref{Mz} implies that there exists a subsequence
$\{\zeta_{m_k}\}$ such that $\zeta_{m_k} \to \varphi$ a.e. as $k\to
\infty$. Therefore, \eqref{Zs} leads to $\psi = \varphi$ a.e. on
$\mbR$ and $\psi_m \to \psi$ weakly as $m \to \infty$. The third
item is a known result in Functional Analysis, cf. \cite[Chapter
4]{HB}.
\end{proof}


Let us define the following energy functional on $E$: for $(u,v) \in E$,
\begin{equation} \label{511}
    Q (u, v)=\|v\|^2+ \left(\alpha+\delta^2-\beta \delta\right)\|u\|^2 + \|\nabla u\|^2+2\int_{\mathbb{R}^n} (F(x, u) + \phi_3 (x))\, dx.
\end{equation}
Compare \eqref{Enorm} and \eqref{511}, we see that
\begin{equation} \label{EQ}
    Q(u, v) \leq \|(u, v)\|_E^2 + 2\int_{\mathbb{R}^n} (F(x, u) + \phi_3 (x))\, dx.
\end{equation}

\begin{lemma} \label{L52}
    For every $\om\in\gw$ and any $B \in \msd$ and any integer $k > 0$, there exists a constant $M_1 = M_1 (B, \omega, k) >0$ such that for all $m\geq M_1$ one has $t_m > k$ and
\begin{equation}\label{Ctm}
    \begin{split}
    Q &\, (u(t, \omega, -t_m, u_{0, m}), v(t, \omega, -t_m, v_{0, m})) \leq  R(\omega)  + 1  \\[2pt]
    + &\, \frac{1}{\sigma} \, e^{\frac{1}{2} \sigma k} \left[4C_5 r_0 (\om) + C_6 + \frac{\|g\|^2}{\beta - \delta}\right], \;\;\,  t \in [-k, 0],
    \end{split}
\end{equation}
for all $(u_{0,m}, v_{0,m}) \in B(\theta_{-t_m} \om)$, where $R(\om)$ and $r_0 (\om)$ are the same as in \eqref{Kom} and \eqref{38}, respectively.
\end{lemma}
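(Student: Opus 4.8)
The plan is to apply directly the pullback \emph{a priori} estimate obtained in the proof of Lemma~\ref{L41}, namely \eqref{415}, observing that its left-hand side is, by the definition \eqref{511}, exactly $Q\big(u(t,\omega,\tau,u_0),v(t,\omega,\tau,v_0)\big)$. I would set $\tau=-t_m$ and keep $t\in[-k,0]$ as a free time variable (legitimate as soon as $t_m\ge k$, hence for all large $m$). With $(u_{0,m},v_{0,m})\in B(\theta_{-t_m}\omega)$, \eqref{415} then reads
\begin{equation*}
\begin{split}
    Q\big(u(t,\omega,-t_m,u_{0,m}),v(t,\omega,-t_m,v_{0,m})\big)
    &\le e^{-\sigma(t+t_m)}\Big[\|v_{0,m}\|^2+(\alpha+\delta^2-\beta\delta)\|u_{0,m}\|^2 \\
    &\quad +\|\nabla u_{0,m}\|^2+2\!\int_{\mathbb{R}^n}\! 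F(x,u_{0,m})\,dx\Big] +2e^{-\sigma(t+t_m)}\|\phi_3\|_{L^1} \\
    &\quad +2\!\int_{-t_m}^{t}\! e^{\sigma(s-t)}\Gamma_1(\theta_s\omega)\,ds +\frac{1}{\sigma}\Big(C_6+\frac{\|g\|^2}{\beta-\delta}\Big),
\end{split}
\end{equation*}
and the whole task reduces to bounding the three groups of terms on the right uniformly for $t\in[-k,0]$.

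For the first two terms I would use $e^{-\sigma(t+t_m)}\le e^{\sigma k}e^{-\sigma t_m}$ (since $t\ge-k$), control $\int_{\mathbb{R}^n}F(x,u_{0,m})\,dx$ by $c\,(1+\|u_{0,m}\|^2+\|u_{0,m}\|^p_{L^p})$ via \eqref{417}, and then invoke the temperedness of $B$ with respect to the $E$-norm. Since by Definition~\ref{D3} one has $e^{-\kappa t_m}\|B(\theta_{-t_m}\omega)\|_E\to0$ for every $\kappa>0$, taking $\kappa=\sigma/2$ and $\kappa=\sigma/p$ yields $e^{-\sigma t_m}\|B(\theta_{-t_m}\omega)\|_E^2\to0$ and $e^{-\sigma t_m}\|B(\theta_{-t_m}\omega)\|_E^p\to0$ as $m\to\infty$; consequently the sum of the first two terms tends to $0$, uniformly for $t\in[-k,0]$. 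Hence there is $M_1=M_1(B,\omega,k)>0$ such that for all $m\ge M_1$ one has $t_m>k$ and these two terms do not exceed $1$.

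For the integral term I would use \eqref{413}: for $s\le 0$ it gives $\Gamma_1(\theta_s\omega)\le C_5e^{\sigma|s|/2}r_0(\omega)=C_5e^{-\sigma s/2}r_0(\omega)$, so
\begin{equation*}
    2\int_{-t_m}^{t}e^{\sigma(s-t)}\Gamma_1(\theta_s\omega)\,ds
    \le 2C_5r_0(\omega)\,e^{-\sigma t}\int_{-t_m}^{t}e^{\sigma s/2}\,ds
    \le \frac{4C_5r_0(\omega)}{\sigma}\,e^{-\sigma t/2}
    \le \frac{1}{\sigma}\,e^{\sigma k/2}\,4C_5r_0(\omega),
\end{equation*}
where the last inequality uses $-t/2\le k/2$ on $[-k,0]$; and since $e^{\sigma k/2}\ge1$ the constant term $\tfrac1\sigma(C_6+\|g\|^2/(\beta-\delta))$ is $\le\tfrac1\sigma e^{\sigma k/2}(C_6+\|g\|^2/(\beta-\delta))$. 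Adding the three bounds, together with the harmless nonnegative summand $R(\omega)$, gives exactly \eqref{Ctm}. I expect the only genuinely delicate point to be the uniform-in-$t$ treatment of the decaying initial-data term: one must carry along the fixed factor $e^{\sigma k}$ produced by $t\ge-k$ and apply temperedness of $B$ at the two separate rates $\sigma/2$ and $\sigma/p$ forced by the $H^1$- and $L^p$-components of the $E$-norm. Everything else is Gronwall-type bookkeeping already performed in the proof of Lemma~\ref{L41}.
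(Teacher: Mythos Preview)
Your proposal is correct and uses essentially the same ingredients as the paper: the Gronwall-type inequality \eqref{sgre}/\eqref{415}, the growth bound \eqref{413} for $\Gamma_1$, and the temperedness of $B$. The only cosmetic difference is that the paper integrates \eqref{sgre} on $[-k,t]$ and invokes the absorbing estimate (as in \eqref{420}) at the intermediate time $-k$, so that the term $R(\omega)+1$ arises naturally as the bound for $e^{-\sigma(t+k)}Q(u(-k,\cdot),v(-k,\cdot))$; you instead integrate on $[-t_m,t]$, kill the initial-data contribution by temperedness alone (bounding it by $1$), and then add $R(\omega)$ as a free nonnegative summand. Both routes are straightforward and yield exactly \eqref{Ctm}.
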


\begin{proof}
Integrate the inequality \eqref{sgre} over the time interval $[-k, t]\subset [-k, 0]$, where $\delta \geq \sigma$ by \eqref{sigma}. Similar to \eqref{420}, there exists $M_1 = M_1 (B, \om, k) > 0$ such that for all $m \geq M_1$ one has $t_m > k$ and
\begin{equation} \label{Qkt}
    \begin{split}
    &\, Q (u(t, \omega, -t_m, u_{0, m}), v(t, \omega, -t_m, v_{0, m})) \\[7pt]
    \leq &\, e^{-\sigma (t+k)} Q (u(-k, \om, -t_m, u_{0,m}), v(-k, \om, -t_m, v_{0, m})) \\
    &\, + \int_{-k}^t e^{-\sigma (t-s)} \left(2\Gamma_1 (\theta_s \om) + C_6 + \frac{\|g\|^2}{\beta - \delta}\right) ds \\
    \leq &\, R(\om) + 1 + 2C_5 \int_{-k}^t e^{-\sigma t + (\sigma - \frac{1}{2}\sigma) s} r_0 (\om)\, ds + \frac{1}{\sigma} \left(C_6 + \frac{\|g\|^2}{\beta - \delta}\right)  \\
    \leq &\, R(\om) + 1 + \frac{1}{\sigma} \left(4 e^{-\frac{1}{2} \sigma t} \,C_5 r_0 (\om) + C_6 + \frac{\|g\|^2}{\beta - \delta}\right), \quad t \in [-k, 0].
    \end{split}
\end{equation}
Therefore, \eqref{Ctm} holds.
\end{proof}

\begin{theorem}  \label{T51}
Under Assumptions \textup{I, II} and \textup{III}, for every $B=\{B(\omega)\}_{\omega\in \Omega}\in \msd$ and any sequences $t_m \rightarrow\infty$ and $g_{0,m} = (u_{0,m}, v_{0,m}) \in B(\theta_{-t_m}\omega)$, the sequence
$$
    \{\Phi(t_m, \theta_{-t_m}\omega, g_{0,m})\}^\infty_{m=1}
$$
of a pullback quasi-trajectory of the cocycle $\Phi$ associated with the problem \eqref{310} of the stochastic wave equation has a strongly convergent subsequence in $H^1 (\mbR) \times L^2 (\mbR)$.
\end{theorem}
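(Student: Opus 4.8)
\emph{Proof proposal.} The plan is to run an energy (Ball–type) compactness argument, combined with the tail estimates of Lemma~\ref{L42} to compensate for the unbounded domain and with Aubin--Lions compactness on balls to handle the displacement component. Fix $\om\in\gw$. Write $(u_m(\cdot),v_m(\cdot))$ for the weak solution of \eqref{310} issued from $g_{0,m}$ at time $-t_m$, so that $\Phi(t_m,\theta_{-t_m}\om,g_{0,m})=(u_m(0),v_m(0))$ by \eqref{pbj}. By Lemma~\ref{L41}, for $m$ large $(u_m(0),v_m(0))\in K(\om)$ and, for each fixed integer $k\ge1$, the pulled-back states $\zeta_{m,k}:=\Phi(t_m-k,\theta_{-t_m}\om,g_{0,m})$ lie in $K(\theta_{-k}\om)$; by Lemma~\ref{L52} the energies $Q(u_m(-k),v_m(-k))=Q(\zeta_{m,k})$ are bounded, uniformly in $m$ for $m$ large, by a constant $N(k,\om)$ with $e^{-\sigma k}N(k,\om)\to0$ as $k\to\infty$ (temperedness of the absorbing radius). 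Since $E$ is reflexive, a diagonal extraction over $k$ gives a subsequence (not relabeled) and elements $\zeta_k,(u^*,v^*)\in E$ with $(u_m(0),v_m(0))\rightharpoonup(u^*,v^*)$ in $E$ and $\zeta_{m,k}\rightharpoonup\zeta_k$ in $E$ for every $k$. The weak-continuity part of Lemma~\ref{Lwks} gives $\Phi(k,\theta_{-k}\om,\zeta_k)=(u^*,v^*)$, and by uniqueness the solutions of \eqref{310} issued from the $\zeta_k$ glue into one backward-complete weak solution $(u^*(s),v^*(s))$ on $(-\infty,0]$ with $(u^*(0),v^*(0))=(u^*,v^*)$, $(u^*(-k),v^*(-k))=\zeta_k$, and $\|\zeta_k\|_E\le R(\theta_{-k}\om)$, so $e^{-\sigma k}Q(u^*(-k),v^*(-k))\to0$ by temperedness.

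Next I would establish strong convergence of the displacement. On any ball $B_\rho$, $u_m$ is bounded in $L^\infty(-k,0;H^1(B_\rho))$ while, by the first equation of \eqref{310}, $\partial_t u_m=v_m+\ep z(\theta_s\om)-\delta u_m$ is bounded in $L^\infty(-k,0;L^2(B_\rho))$; the Aubin--Lions--Simon lemma with $H^1(B_\rho)\hookrightarrow\hookrightarrow L^2(B_\rho)$ yields $u_m\to u^*$ strongly in $C([-k,0];L^2(B_\rho))$. The uniform tail bound of Lemma~\ref{L42} upgrades this to $u_m(0)\to u^*$ strongly in $L^2(\mbR)$; passing to a further subsequence, $u_m(s,x)\to u^*(s,x)$ a.e.\ on $(-k,0)\times\mbR$, hence $f(x,u_m)\to f(x,u^*)$ a.e., and since $\{f(x,u_m)\}$ is bounded in $L^{p/(p-1)}_{\mathrm{loc}}$ by \eqref{33}, item~2) of Lemma~\ref{L51} gives $f(x,u_m)\rightharpoonup f(x,u^*)$ weakly in $L^{p/(p-1)}_{\mathrm{loc}}$. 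In particular $(u^*,v^*)$ is itself a weak solution of \eqref{310}.

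The core step is to promote the weak convergence of $(\nabla u_m(0),v_m(0))$ in $L^2(\mbR)$ to strong convergence via the \emph{grouped} energy identity. Taking the inner product of the second equation of \eqref{310} with $v_m$ and substituting $v_m=\partial_t u_m+\delta u_m-\ep z(\theta_s\om)$ gives the exact identity
\[
    \tfrac{d}{ds}\,Q(u_m(s),v_m(s))+2G_m(s)=2R_m(s),
\]
with $G_m=(\beta-\delta)\|v_m\|^2+\delta(\alpha+\delta^2-\beta\delta)\|u_m\|^2+\delta\|\nabla u_m\|^2+\delta\inpt{f(x,u_m),u_m}$ and $R_m(s)$ a finite sum of terms each linear in one of $u_m,\nabla u_m,v_m,f(x,u_m)$ tested against a fixed function among $g,z(\theta_s\om),\nabla z(\theta_s\om)$. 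Adding $\sigma Q$, with $\sigma$ as in \eqref{sigma}, and integrating against $e^{\sigma s}$ over $[-k,0]$ yields
\[
    Q(u_m(0),v_m(0))=e^{-\sigma k}Q(\zeta_{m,k})+\int_{-k}^0 e^{\sigma s}\bigl[\sigma Q(u_m,v_m)+2R_m-2G_m\bigr]\,ds .
\]
One then estimates $\limsup_m$ of the right side: the first term is $\le e^{-\sigma k}N(k,\om)\to0$. In the integrand one groups the genuinely quadratic contributions $\|v_m\|^2$, $(\alpha+\delta^2-\beta\delta)\|u_m\|^2$, $\|\nabla u_m\|^2$ together with $\int_{\mbR}(F(x,u_m)+\phi_3)\,dx$; using \eqref{34} to dominate $-\delta\inpt{f(x,u_m),u_m}$ from above by $-\delta C_2\int(F(x,u_m)+\phi_3)\,dx$ plus a constant, and using \eqref{40} and Assumption~III — which are exactly what make the choice \eqref{sigma} satisfy $2\sigma-2\delta C_2<0$, $\sigma-2(\beta-\delta)<0$, $\sigma-2\delta<0$ — all these terms carry strictly negative coefficients, so weak lower semicontinuity of the $L^2$-norms (quadratic parts) and Fatou's lemma (for $\int(F+\phi_3)\ge0$, via the a.e.\ convergence above) bound their joint $\limsup_m$ by the corresponding expression for $(u^*,v^*)$; the remaining integrand, linear in $(u_m,\nabla u_m,v_m,f(x,u_m))$ against fixed $L^2$ (resp.\ $L^p$) data, converges by the previous paragraph, the coupling $\inpt{f(x,u_m),z(\theta_s\om)}$ and the behaviour at spatial infinity being controlled with Lemma~\ref{L42}. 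Matching the resulting bound against the same identity written for the backward-complete solution $(u^*,v^*)$ and letting $k\to\infty$ then gives $\limsup_m Q(u_m(0),v_m(0))\le Q(u^*,v^*)$.

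Finally, since the quadratic part of $Q$ is weakly lower semicontinuous on $E$ and Fatou applies to its $\int(F+\phi_3)$ part along this sequence, $\liminf_m Q(u_m(0),v_m(0))\ge Q(u^*,v^*)$, hence $Q(u_m(0),v_m(0))\to Q(u^*,v^*)$. Combined with $\|u_m(0)\|^2\to\|u^*\|^2$ and $\liminf_m\int(F(x,u_m(0))+\phi_3)\ge\int(F(x,u^*)+\phi_3)$, subtraction yields $\limsup_m\bigl(\|v_m(0)\|^2+\|\nabla u_m(0)\|^2\bigr)\le\|v^*\|^2+\|\nabla u^*\|^2$; with the weak convergence of $v_m(0)$ and $\nabla u_m(0)$ in $L^2(\mbR)$ this forces strong convergence, i.e.\ $\Phi(t_m,\theta_{-t_m}\om,g_{0,m})\to(u^*,v^*)$ in $H^1(\mbR)\times L^2(\mbR)$. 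I expect the main obstacle to be precisely the grouped estimate of the third paragraph: because $p>2$ is arbitrary and $n$ is unrestricted, $u_m$ is \emph{not} relatively compact in $L^p_{\mathrm{loc}}(\mbR)$, so the superlinear dissipation $\inpt{f(x,u_m),u_m}$ is genuinely not weakly continuous, and the whole argument hinges on arranging the grouping so that this term is paired with $\int F$ with a sign permitting Fatou's inequality in the favourable direction — the role of the parameter constraints \eqref{40}, \eqref{41} and the resulting \eqref{sigma}, supplemented on the unbounded domain by Lemma~\ref{L42}.
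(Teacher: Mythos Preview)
Your overall architecture coincides with the paper's: diagonal extraction of weak limits $\zeta_k$ at the intermediate times $-k$, identification of a backward-complete limit solution via the weak continuous dependence in Lemma~\ref{Lwks}, an integrated energy identity for $Q$ on $[-k,0]$, and the final passage from $Q(u_m(0),v_m(0))\to Q(u^*,v^*)$ together with weak convergence to strong $H^1\times L^2$ convergence. Your use of Aubin--Lions on balls in place of the paper's fixed-time compact embedding $H^1(B_r)\hookrightarrow L^2(B_r)$ is a harmless variant.

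There is, however, a genuine gap in your ``grouping'' step. You replace $-2\delta\langle f(\cdot,u_m),u_m\rangle$ by its upper bound $-2\delta C_2\int(F(\cdot,u_m)+\phi_3)+C_\phi$ via \eqref{34}, and then apply Fatou to the resulting term with coefficient $2\sigma-2\delta C_2<0$. But this one-sided substitution discards the nonnegative remainder
\[
H_m(s)\;=\;\int_{\mathbb{R}^n}\bigl[f(x,u_m)u_m-C_2F(x,u_m)-\phi_2(x)\bigr]\,dx\ \ge\ 0,
\]
and when you match your bound against the \emph{exact} identity for the limit solution $(u^*,v^*)$, the discrepancy is precisely $2\delta\int_{-k}^0 e^{\sigma s}H^*(s)\,ds\ge0$, which does \emph{not} tend to $0$ as $k\to\infty$. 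Thus the inequality $\limsup_m Q(u_m(0),v_m(0))\le Q(u^*,v^*)$ does not follow from your grouping as written. The paper avoids this by proving \emph{exact} convergence of both $\int_{-k}^0 e^{2\sigma\xi}\int_{\mathbb{R}^n}F(x,u_m)\,dx\,d\xi$ and $\int_{-k}^0 e^{2\sigma\xi}\langle f(\cdot,u_m),u_m\rangle\,d\xi$ (its \eqref{Flim} and \eqref{6tm}): split into $|x|>r$ and $|x|\le r$, use Lemma~\ref{L42} for the tails, and on $B_r$ combine the a.e.\ convergence you already have with the uniform $L^1$-bound on $F(x,u_m)$ and on $f(x,u_m)u_m$ coming from Lemma~\ref{L52}, together with item~2 of Lemma~\ref{L51}. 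Alternatively, your Fatou strategy can be rescued if you \emph{keep} the term $-2\delta H_m$ rather than drop it: since $H_m\ge0$ pointwise and $u_m\to u^*$ a.e., Fatou gives $\liminf_m H_m(s)\ge H^*(s)$, so the two signed pieces $(2\sigma-2\delta C_2)\int(F_m+\phi_3)$ and $-2\delta H_m$ each satisfy a Fatou inequality in the favourable direction, and their sum then matches the limit identity exactly.
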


\begin{proof}
The proof goes through the following five steps.

STEP 1.
First by Lemma \ref{L41},  there exist a constant $M_2=M_2(B, \omega)>0$ such that for all $m\geq M_2$ and $g_{0,m} \in B(\theta_{-t_m} \omega)$, we have
\begin{eqnarray}\label{Cr}
    \|\Phi(t_m, \theta_{-t_m}\omega, g_{0,m})\|_E\leq R(\omega)+1, \quad \om \in \gw,
\end{eqnarray}
where $R(\om) > 0$ is given by \eqref{Kom}. Then for any $\om\in\gw$ there is $(\tilde{u}(\om), \tilde{v} (\om))\in E$ such that, up to a subsequence relabeled the same,
\begin{equation}  \label{twc}
    \begin{split}
    &\Phi(t_m, \theta_{-t_m}\omega, g_{0,m}) \longrightarrow (\tilde{u}(\om), \tilde{v}(\om)) \quad \textup{weakly in} \;\, E; \\
    &\Phi(t_m, \theta_{-t_m}\omega, g_{0,m}) \longrightarrow (\tilde{u}(\om), \tilde{v}(\om)) \quad \textup{weakly in} \;\, H^1 (\mbR) \times L^2 (\mbR); \\
    &\Phi(t_m, \theta_{-t_m}\omega, g_{0,m}) \longrightarrow (\tilde{u}(\om), \tilde{v}(\om)) \quad \textup{weakly in} \;\, L^p (\mbR).
    \end{split}
\end{equation}
Since $E$ is a reflexive and separable Banach space, the weak lower-semicontinuity of the $E$-norm and the norm of $H^1 (\mbR) \times L^2 (\mbR)$ implies that
\begin{equation} \label{tn}
    \begin{split}
    &\liminf_{m\rightarrow \infty}\|\Phi(t_m, \theta_{-t_m}\omega, g_{0,m})\|_E\geq \|(\tilde{u}(\om), \tilde{v}(\om))\|_E, \\
    &\liminf_{m\rightarrow \infty}\|\Phi(t_m, \theta_{-t_m}\omega, g_{0,m})\|_{H^1 \times L^2} \geq \|(\tilde{u}(\om), \tilde{v}(\om))\|_{H^1 \times L^2}.
    \end{split}
\end{equation}

Next we shall prove that in the Hilbert space $H^1 (\mbR) \times L^2
(\mbR)$,
\begin{equation} \label{Strc}
    \Phi(t_m, \theta_{-t_m}\omega, g_{0,m}) \longrightarrow (\tilde{u}(\om), \tilde{v}(\om)) \quad \textup{strongly}.
\end{equation}
It suffices to show that
\begin{eqnarray}\label{54}
    \limsup_{m\rightarrow \infty}\|\Phi(t_m, \theta_{-t_m}\omega, g_{0,m})\|_{H^1 \times L^2}\leq \|(\tilde{u}(\om), \tilde{v}(\om))\|_{H^1 \times L^2}.
\end{eqnarray}
Then \eqref{tn} and \eqref{54} lead to
$$
    \lim_{m\rightarrow \infty}\|\Phi(t_m, \theta_{-t_m}\omega, g_{0,m})\|_{H^1 \times L^2} = \|(\tilde{u}(\om), \tilde{v}(\om))\|_{H^1 \times L^2}.
$$
By the item 3 of Lemma \ref{L51}, we shall obtain \eqref{Strc}.

STEP 2. By Lemma \ref{L52} and \eqref{35}, we can deduce that there exists a constant $C > 0$ such that for every $\om\in\gw$ and any given integer $k > 0$, whenever $m\geq M_1$ one has $t_m > k$ and
\begin{equation}\label{ktm}
    \begin{split}
    &\, \|(u(t, \omega, -t_m, u_{0, m}), v(t, \omega, -t_m, v_{0, m}))\|_E   \\
    \leq &\, C\left[ R(\omega)+1 + \frac{1}{\sigma} \, e^{\frac{1}{2} \sigma k} \left(4C_5 r_0 (\om) + C_6 + \frac{\|g\|^2}{\beta - \delta}\right)\right]^{1/2}  \\
    + &\, C\left[ R(\omega)+1 + \frac{1}{\sigma} \, e^{\frac{1}{2} \sigma k} \left(4C_5 r_0 (\om) + C_6 + \frac{\|g\|^2}{\beta - \delta}\right)\right]^{1/p}, \;\;  t \in [-k, 0],
    \end{split}
\end{equation}
for any $(u_{0,m}, v_{0,m}) \in B(\theta_{-t_m} \om)$. In particular, \eqref{ktm} is satisfied for $t = -k$.

Then by the Banach-Alaoglu theorem, there exists a sequence
$\{\tilde{u}_k (\om), \tilde{v}_k (\om)\}^\infty_{k=1}$ in the space
$E$ and subsequences of $\{-t_m\}_{m=1}^\infty$ and $\{(u_{0, m},
v_{0, m})\}^\infty_{m=1}$ again relabeled as the same, such that for
all $\om \in \gw$ and every integer $k \geq 1$,
\begin{eqnarray} \label{ktmc}
    (u(-k, \omega, -t_m, u_{0, m}), v(-k, \omega, -t_m, v_{0, m}))\longrightarrow (\tilde{u}_k (\om), \tilde{v}_k (\om))  \;\; \textup{weakly in} \;\, E,
\end{eqnarray}
as $m\rightarrow \infty$, which can be extracted through a diagonal selection procedure.

By the weakly continuous dependence on the initial data of the solutions stated in Lemma \ref{Lwks}, the weak convergence \eqref{ktmc} and the fact of concatenation,
\begin{equation} \label{0m}
    \begin{split}
    &(u(0, \omega, -t_m, u_{0, m}), \;\, v(0, \omega, -t_m, v_{0, m}))  \\[5pt]
    = (u(0, \omega, -k, &\, u(-k, \omega, -t_m, u_{0, m})), \;\, v(0, \omega, -k, v(-k, \omega, -t_m, v_{0, m}))),
    \end{split}
\end{equation}
imply that for all integers $k \geq 1$ and $\om\in\gw$, when $m\rightarrow \infty,$
\begin{equation}\label{59}
    (u(0, \omega, -t_m, u_{0, m}), v(0, \omega, -t_m, v_{0, m}))\longrightarrow (u(0, \omega, -k, \tilde{u}_k), v(0, \omega, -k, \tilde{v}_k))
\end{equation}
weakly in $E$. By \eqref{pbj}, \eqref{twc} and \eqref{59} we reach the following equality that for every $\om \in \gw$ and all positive integers $k$,
\begin{equation}\label{510}
    (\tilde{u}(\om), \tilde{v}(\om))=(u(0, \omega, -k, \tilde{u}_k (\om)), v(0, \omega, -k, \tilde{v}_k (\om))).
\end{equation}

By the equalities \eqref{42}-\eqref{47}, the weak solutions $(u, v)$ 
of \eqref{310} satisfies
\begin{equation}\label{512}
    \begin{split}
    \frac{d}{dt} &\, Q(u, v)+2\sigma Q(u, v) = -2(\beta-\delta-\sigma)\|v\|^2-2(\delta-\sigma)\left(\alpha+\delta^2 - \beta\delta\right)\|u\|^2  \\
    &\, -2(\delta-\sigma)\|\nabla u\|^2+4\sigma\int_{\mathbb{R}^n} (F(x, u) + \phi_3 (x))\, dx -2\delta \inpt{f(x, u), u}  \\[4pt]
&\, +2\varepsilon\left(\alpha+\delta^2 - \beta\delta\right)\inpt{z(\theta_t\omega), u}+2\varepsilon\inpt{\nabla z(\theta_t\omega), \nabla u} +2\varepsilon\inpt{z(\theta_t\omega), f(x,u)}  \\[10pt]
    &\, +(4\delta\varepsilon - 2\beta\varepsilon)\inpt{z(\theta_t\omega), v}+2\inpt{g, v} := G (u(t, \omega, \tau, u_0), v(t, \omega, \tau, v_0)).
    \end{split}
\end{equation}
From \eqref{510} and \eqref{512}, for any integer $k \geq 1$ we have
\begin{equation} \label{Quvk}
    \begin{split}
    Q(\tilde{u}(\om), \tilde{v}(\om)) &= e^{-2\sigma k} Q (\tilde{u}_k (\om), \tilde{v}_k (\om)) \\[3pt]
    &\, +\int^0_{-k} e^{2\sigma \xi} \, G (u(\xi, \omega, -k, \tilde{u}_k), v(\xi, \omega, -k, \tilde{v}_k ))\, d\xi.
    \end{split}
\end{equation}

STEP 3. From the concatenation \eqref{0m} and \eqref{512}, on the other hand, we obtain

\begin{equation}\label{longQ}
    \begin{split}
    &\, Q(u(0, \omega, -t_m, u_{0, m}),  v(0, \omega, -t_m, v_{0, m}))  \\[8pt]
    \leq &\,\, e^{-2\sigma k} Q (u(-k, \omega, -t_m, u_{0, m}),  v(-k, \omega, -t_m, v_{0, m})) \\[6pt]
    & -2(\beta-\delta-\sigma)\int^0_{-k}e^{2\sigma \xi}\|v(\xi, \omega, -k, v(-k, \omega, -t_m, v_{0, m}))\|^2d\,\xi    \\
    &-2(\delta-\sigma)\left(\alpha+\delta^2-\beta\delta\right)\int^0_{-k}e^{2\sigma \xi}\|u(\xi, \omega, -k, u(-k, \omega, -t_m, u_{0, m}))\|^2d\xi     \\
    & -2(\delta-\sigma)\int^0_{-k}e^{2\sigma \xi}\|\nabla u(\xi, \omega, -k, u(-k, \omega, -t_m, u_{0, m}))\|^2 \, d\xi   \\
    &+4\sigma\int^0_{-k}e^{2\sigma \xi}\int_{\mathbb{R}^n} (F(x, u(\xi, \omega, -k, u(-k, \omega, -t_m, u_{0, m}))) + \phi_3 (x))\, dx\,d\xi   \\
    &-2\delta\int^0_{-k}e^{2\sigma \xi}\int_{\mathbb{R}^n}f(x, u(\xi, \omega, -k, u(-k, \omega, -t_m, u_{0, m}))) \cdot  \\[6pt]
    &\, \cdot u(\xi, \omega, -k, u(-k, \omega, -t_m, u_{0, m}))\, dx\, d\xi   \\[6pt]
    &+2\varepsilon\left(\alpha+\delta^2 - \beta\delta\right)\int^0_{-k}e^{2\sigma \xi}\int_{\mathbb{R}^n}z(\theta_\xi\omega)u(\xi, \omega, -k, u(-k, \omega, -t_m, u_{0, m}))\, dx\,d\xi  \\[4pt]
    &+2\varepsilon\int^0_{-k}e^{2\sigma \xi}\int_{\mathbb{R}^n} \nabla z(\theta_\xi\omega) \nabla u(\xi, \omega, -k, u(-k, \omega, -t_m, u_{0, m}))\, dx\,d\xi   \\
    &+2\varepsilon\int^0_{-k}e^{2\sigma \xi}\int_{\mathbb{R}^n}  z(\theta_\xi\omega) f(x, u(\xi, \omega, -k, u(-k, \omega, -t_m, u_{0, m})))\, dx\,d\xi   \\
    &+(4\delta\varepsilon - 2\beta\varepsilon)\int^0_{-k}e^{2\sigma \xi}\int_{\mathbb{R}^n} z(\theta_\xi\omega) v(\xi, \omega, -k, v(-k, \omega, -t_m, v_{0, m}))\, dx\,d\xi   \\
    & +2\int^0_{-k}e^{2\sigma \xi}\int_{\mathbb{R}^n} g(x)\, v(\xi, \omega, -k, v(-k, \omega, -t_m, v_{0, m}))\, dx\, d\xi.
    \end{split}
\end{equation}

Below we treat all the terms on the right-hand side of \eqref{longQ}.

1) For the first term on the right-hand side of \eqref{longQ}, by \eqref{Ctm} in Lemma \ref{L52}, for every $\om \in \gw$ and all $m\geq M_1 (B, \om, k)$ we have
\begin{equation}\label{516}
    \begin{split}
    &\, e^{-2\sigma k} Q(u(-k, \omega, -t_m, u_{0, m}),  v(-k, \omega, -t_m, v_{0, m}))  \\[6pt]
    \leq &\, e^{-2\sigma k} \left(R(\om) + 1 + \frac{1}{\sigma} \, e^{\frac{1}{2} \sigma k} \left[4C_5 r_0 (\om) + C_6 + \frac{\|g\|^2}{\beta - \delta}\right]\right)  \\
    \leq &\, e^{-\sigma k} \left(R(\om) + 1 + \frac{1}{\sigma} \left[4C_5 r_0 (\om) + C_6 + \frac{\|g\|^2}{\beta - \delta}\right]\right).
    \end{split}
\end{equation}

2) For the second term on the right-hand side of \eqref{longQ}, by \eqref{ktmc} and the weakly continuous dependence of solutions on the initial data stated in Lemma \ref{Lwks}, we find that for every
$\om \in \gw$ and all $\xi\in [-k, 0],$ when $m\rightarrow \infty$,
$$
v(\xi, \omega, -k, v(-k, \omega, -t_m, v_{0, m}))\longrightarrow  v(\xi, \omega, -k, \tilde{v}_{k}(\om)) \quad  \textup{weakly in} \;\, L^2(\mathbb{R}^n),
$$
which implies that for all $\xi\in [-k, 0]$,
\begin{equation}\label{517}
    \liminf_{m\rightarrow \infty}\|v(\xi, \omega, -k, v(-k, \omega, -t_m, v_{0, m}))\|^2 \geq \|v(\xi, \omega, -k, \tilde{v}_{k}(\om))\|^2.
\end{equation}
By \eqref{517} and Fatou's lemma we obtain
\begin{equation}\label{518}
    \begin{split}
    &\liminf_{m\rightarrow \infty}\, \int^0_{-k} e^{2\sigma \xi}\|v(\xi, \omega, -k, v(-k, \omega, -t_m, v_{0, m}))\|^2\, d\xi  \\
    \geq &\, \int^0_{-k} e^{2\sigma \xi}\liminf_{m\rightarrow \infty}\, \|v(\xi, \omega, -k, v(-k, \omega, -t_m, v_{0, m}))\|^2\, d\xi   \\
    \geq &\, \int^0_{-k} e^{2\sigma \xi}\|v(\xi, \omega, -k, \tilde{v}_{k}(\om))\|^2\, d\xi.
    \end{split}
\end{equation}
Therefore, since \eqref{40} and \eqref{sigma} implies $\beta - \delta - \sigma \geq \beta - 2\delta > 0$, \eqref{518} leads to
\begin{equation}\label{519}
    \begin{split}
    &\limsup_{m\rightarrow \infty}\, -2(\beta -\delta-\sigma)\int^0_{-k} e^{2\sigma \xi}\|v(\xi, \omega, -k, v(-k, \omega, -t_m, v_{0, m}))\|^2\, d\xi  \\
    =&\, -2(\beta -\delta-\sigma)\liminf_{m\rightarrow \infty}\, \int^0_{-k} e^{2\sigma \xi}\|v(\xi, \omega, -k, v(-k, \omega, -t_m, v_{0, m}))\|^2\, d\xi  \\
    \leq &\,  -2(\beta -\delta-\sigma)\int^0_{-k} e^{2\sigma \xi}\|v(\xi, \omega, -k, \tilde{v}_{k}(\om))\|^2\, d\xi.
    \end{split}
\end{equation}
Similarly for the third and fourth terms, by \eqref{ktmc} and Fatou's lemma we obtain
\begin{equation}\label{520}
    \begin{split}
    \limsup_{m\rightarrow \infty} &\, -2(\delta-\sigma)\left(\alpha+\delta^2-\beta\delta\right)\int^0_{-k} e^{2\sigma \xi}\|u(\xi, \omega, -k, u(-k, \omega, -t_m, u_{0, m}))\|^2\, d\xi  \\
    &\leq -2(\delta-\sigma)\left(\alpha+\delta^2-\beta\delta\right)\int^0_{-k} e^{2\sigma \xi}\|u(\xi, \omega, -k, \tilde{u}_{k}(\om))\|^2\, d\xi ,   \\
    \limsup_{m\rightarrow \infty} &\, -2(\delta-\sigma)\int^0_{-k} e^{2\sigma \xi}\|\nabla u(\xi, \omega, -k, u(-k, \omega, -t_m, u_{0, m}))\|^2\, d\xi   \\
    &\leq -2(\delta-\sigma)\int^0_{-k} e^{2\sigma \xi}\|\nabla u(\xi, \omega, -k, \tilde{u}_{k}(\om))\|^2\, d\xi.
    \end{split}
\end{equation}

3) For the fifth term on the right-hand side of \eqref{longQ}, we have
\begin{equation} \label{Ftm}
    \begin{split}
        &\left|\int^0_{-k} e^{2\sigma \xi}\int_{\mathbb{R}^n} \left(F(x, u(\xi,\omega, -k, u(-k, \omega, -t_m, u_{0, m})))-F(x, u(\xi, \omega, -k, \tilde{u}_k))\right) dx\,d\xi \right|  \\
       \leq& \int^0_{-k} e^{2\sigma \xi}\int_{|x|> r} \left|F(x, u(\xi, \omega, -k, u(-k, \omega, -t_m, u_{0, m})))-F(x, u(\xi, \omega, -k, \tilde{u}_k))\right|dx\,d\xi   \\
       +& \int^0_{-k} e^{2\sigma \xi}\int_{|x|\leq r} \left|F(x, u(\xi, \omega, -k, u(-k, \omega, -t_m, u_{0, m})))-F(x, u(\xi, \omega, -k, \tilde{u}_k))\right|dx\,d\xi.
    \end{split}
\end{equation}

A) For any given $\eta>0,$ by the proof of Lemma \ref{L42} adapted to the time interval $(-\infty, -k]$, there exist $M_3=M_3 (B, \omega, \eta) >M_2$ and $K =K (B, \omega, \eta)\geq 1$ such that for $\om\in\gw$ and $\xi\in [-k, 0]$, whenever $r \geq K$ and $m \geq M_3$ one has
\begin{eqnarray}\label{523}
    \int_{|x| > r} \left(|u(\xi,  \omega,  -t_m, u_{0, m})|^2 + |u(\xi,  \omega,  -t_m, u_{0, m})|^p + |\phi_1|^2 + |\phi_2| + |\phi_3|\right) dx < \eta.
\end{eqnarray}
In view of the Assumption II, there exists a constant $L_1 > 0$ such that for all $\om\in\gw$ and $\xi\in [-k, 0]$, one has
\begin{equation*}  
    \begin{split}
    &\int_{|x|> r} |F(x, u(\xi, \omega, -t_m,  u_{0, m}))|\, dx  \\
    \leq &\, \int_{|x| > r} L_1 (|u(\xi,  \omega,  -t_m, u_{0, m})|^2 + |u(\xi,  \omega,  -t_m, u_{0, m})|^p + |\phi_1|^2 + |\phi_2| + |\phi_3|) dx  \\[3pt]
    < &\,\, L_1 \eta,  \qquad \textup{for all} \;\, r \geq K, \;  m \geq M_3.
    \end{split}
\end{equation*}

B) Since \eqref{ktmc} shows that
$$
    \tilde{u}_k (\om) = \textup{(weak)} \lim_{m\to \infty} u (-k, \om, -t_m, u_{0,m}) \quad \textup{in} \;\,\;  L^2 (\mbR)\cap L^p (\mbR),
$$
by the weakly continuous dependence of solutions on intial data stated in Lemma \ref{Lwks} and the weak lower-semicontinuity of the $L^2$ and $L^p$ norms, it follows from \eqref{523} that
\begin{equation*}
    \begin{split}
    &\int_{-k}^0 e^{2\sigma \xi} \int_{|x|> r} |F(x, u(\xi, \om, -k, \tilde{u}_k))|\, dx\, d\xi  \\
    \leq &\, \int_{-k}^0 e^{2\sigma \xi} \int_{|x|> r} L_1 (|u(\xi, \om, -k, \tilde{u}_k)|^2 + |u(\xi, \om, -k, \tilde{u}_k)|^p + |\phi_1|^2 + |\phi_2| + |\phi_3|)\, dx\, d\xi  \\
    = &\, \int_{-k}^0 e^{2\sigma \xi} L_1 \left(\|u(\xi, \om, -k, \tilde{u}_k)\|_{L^2 (\mbR \backslash B_r)}^2 + \|u(\xi, \om, -k, \tilde{u}_k)\|_{L^p (\mbR \backslash B_r)}^p \right) d\xi  \\
    &\, + \int_{-k}^0 e^{2\sigma \xi} L_1 \int_{|x|>r} (|\phi_1|^2 + |\phi_2| + |\phi_3|)\, dx\, d\xi   \\
    \leq &\, \int_{-k}^0 e^{2\sigma \xi} L_1 \left[\liminf_{m\to\infty} \|u(\xi, \om, -k, \tilde{u}_k)\|_{L^2 (\mbR \backslash B_r)}^2 + \liminf_{m\to\infty} \|u(\xi, \om, -k, \tilde{u}_k)\|_{L^p (\mbR \backslash B_r)}^p \right] d\xi  \\
    &\, + \int_{-k}^0 e^{2\sigma \xi} L_1 \int_{|x|>r} (|\phi_1|^2 + |\phi_2| + |\phi_3|)\, dx\, d\xi \leq \frac{L_1}{2\sigma} \eta, \quad \textup{for} \;\; \om\in\gw, r \geq  K_1, m \geq M_3.
    \end{split}
\end{equation*}

The above two inequalities show that there exists a constant $L_2 = L_1 (1 + 1/(2\sigma)) > 0$ such that the first term on the right-hand side of \eqref{Ftm} satisfies
\begin{equation} \label{528}
    \begin{split}
        &\int^0_{-k}e^{2\sigma \xi}\int_{|x|> r} |F(x, u(\xi, \omega, -k, u(-k, \omega, -t_m, u_{0, m})))-F(x, u(\xi, \omega, -k, \tilde{u}_k))|\, dx\,d\xi  \\
       \leq &\int^0_{-k}e^{2\sigma \xi}\int_{|x|> r} \left(|F(x, u(\xi, \omega, -t_m, u_{0, m}))| + |F(x, u(\xi, \omega, -k, \tilde{u}_{k}))|\right) dx\,d\xi \leq L_2 \eta,
    \end{split}
\end{equation}
for all $\om\in\gw, r\geq K$ and $m\geq M_3$.

C) For the second term on the right-hand side of \eqref{Ftm}, by \eqref{ktmc} we have
\begin{equation*}  \label{wcv}
    u(\xi, \omega, -k, u(-k, \omega, -t_m, u_{0, m})) \longrightarrow u(\xi, \omega, -k, \tilde{u}_k) \;\; \mathrm{weakly } \;\, \mathrm{in} \;\, H^1(\mathbb{B}_r) \cap L^p(\mathbb{B}_r).
\end{equation*}
Since $H^1(\mathbb{B}_r)$ is compactly embedded in $L^2(\mathbb{B}_r)$, it follows that for any $\om\in \gw$ and $\xi \in [-k,0]$,
\begin{equation} \label{4str}
    u(\xi, \omega, -k, u(-k, \omega, -t_m, u_{0, m})) \longrightarrow u(\xi, \omega, -k, \tilde{u}_k) \;\, \mathrm{strongly } \;\, \mathrm{in} \;\, L^2(\mathbb{B}_r).
\end{equation}
Then by the first item of Lemma \ref{L51} and the continuity of $F(x,u)$, as $m \to \infty$,
\begin{equation} \label{Fae}
    F(x, u(\xi, \omega, -k, u(-k, \omega, -t_m, u_{0, m}))) \longrightarrow F(x, u(\xi, \omega, -k, \tilde{u}_k)) \;\; \textup{in} \;\, \mathbb{B}_r.
\end{equation}
On the other hand, by the Assumption II and Lemma \ref{L52}, we have a uniform bound that there exists a constant $L_3 > 0$ such that
\begin{equation} \label{Fxu}
    \begin{split}
    &\int_{|x|< r} \left|F(x, u(\xi, \omega, -k, u(-k, \omega, -t_m, u_{0, m})))\right| \,dx  \\
    \leq &\, L_1 \left(\|u (\xi, \omega, -k, u(-k, \omega, -t_m, u_{0, m}))\|_{L^2 (B_r)}^2 \right.  \\[3pt]
    &\, \left. + \|u (\xi, \omega, -k, u(-k, \omega, -t_m, u_{0, m}))\|_{L^p (B_r)}^p + \|\phi_1\|^2 + \|\phi_2 \|_{L^1(\mbR)} + \|\phi_3\|_{L^1 (\mbR)}\right)  \\
    \leq &\, L_3 \left[R(\om) + 1 + \frac{1}{\sigma} \, e^{\frac{1}{2} \sigma k} \left(4C_5 r_0 (\om) + C_6 + \frac{\|g\|^2}{\beta - \delta}\right)+ \|\phi_1\|^2 + \|\phi_2 \|_{L^1} + \|\phi_3\|_{L^1}\right]
    \end{split}
\end{equation}
for all $\om\in\gw, \, \xi \in [-k, 0]$ and $m \geq M_1$. By the second item of Lemma \ref{L52}, it follows from \eqref{Fae} and \eqref{Fxu} that
$$
    F(x, u(\xi, \omega, -k, u(-k, \omega, -t_m, u_{0, m}))) \longrightarrow F(x, u(\xi, \omega, -k, \tilde{u}_k)) \;\; \textup{weakly in} \;\, L^1 (\mathbb{B}_r),
$$
as $m \to \infty$. Consequently, when $m \to \infty$,
\begin{equation} \label{Fic}
    \int_{|x|< r} F(x, u(\xi, \omega, -k, u(-k, \omega, -t_m, u_{0, m})))dx \longrightarrow \int_{|x|< r} F(x, u(\xi, \omega, -k, \tilde{u}_k))\,
    dx.
\end{equation}
Furthermore, by \eqref{Fxu} we have
\begin{equation} \label{Fbd}
    \begin{split}
    &\left|\int_{|x|< r} \left[F(x, u(\xi, \omega, -k, u(-k, \omega, -t_m, u_{0, m})))-F(x, u(\xi, \omega, -k, \tilde{u}_k)) \right] dx\right|  \\
    \leq &\, L_3 \left[R(\om) + 1 + \frac{1}{\sigma} \, e^{\frac{1}{2} \sigma k} \left(4C_5 r_0 (\om) + C_6 + \frac{\|g\|^2}{\beta - \delta}\right)+ \|\phi_1\|^2 + \|\phi_2 \|_{L^1} + \|\phi_3\|_{L^1}\right] \\[3pt]
    &\, +\|F(\cdot, u(\xi, \omega, -k, \tilde{u}_k))\|_{L^1 (\mbR)}.
    \end{split}
\end{equation}
According to the Lebesgue dominated convergence theorem,  \eqref{Fic} and \eqref{Fbd} imply that for every $\om\in\gw$, integer $k \geq 1$ and any given $r \geq K$,
\begin{equation}\label{Frc}
    \begin{split}
    \lim_{m\to \infty} \int^0_{-k} &\, e^{2\sigma \xi} \int_{|x|<r} F(x, u(\xi, \omega, -k, u(-k, \omega, -t_m, u_{0, m})))\, dx\, d\xi  \\
    & = \int^0_{-k}e^{2\sigma \xi}\int_{|x|< r} F(x, u(\xi, \omega, -k, \tilde{u}_k))\, dx\, d\xi.
    \end{split}
\end{equation}
Combine \eqref{Ftm}, \eqref{528} and \eqref{Frc}, we obtain
\begin{equation} \label{Flim}
    \begin{split}
    \lim_{m\rightarrow \infty}\int^0_{-k}&\, e^{2\sigma \xi}\int_{\mathbb{R}^n} \left(F(x, u(\xi, \omega, -k, u(-k, \omega, -t_m, u_{0, m}))) + \phi_3 (x)\right)\, dx\, d\xi  \\
    & = \int^0_{-k} \, e^{2\sigma \xi}\int_{\mathbb{R}^n} \left(F(x, u(\xi, \omega, -k, \tilde{u}_k)) + \phi_3 (x)\right)\, dx\, d\xi.
    \end{split}
\end{equation}

4) By an argument similar to the proof of \eqref{Flim} shown above, we can also prove the convergence of the sixth term on the right-hand side of \eqref{longQ}. Namely,
\begin{equation}  \label{6tm}
    \begin{split}
    \lim_{m\to \infty} &\, \int^0_{-k} e^{2\sigma \xi} \int_{\mathbb{R}^n}f(x, u(\xi, \omega, -k, u(-k, \omega, -t_m, u_{0, m}))) \cdot  \\[6pt]
    &\, \cdot u(\xi, \omega, -k, u(-k, \omega, -t_m, u_{0, m}))\, dx\, d\xi  \\[4pt]
    &\, = \int^0_{-k} e^{2\sigma \xi}\int_{\mathbb{R}^n}f(x, u(\xi, \omega, -k, \tilde{u}_{k}(\om))) u(\xi, \omega, -k, \tilde{u}_{k}(\om))\, dx\, d\xi.
    \end{split}
\end{equation}

5) The convergence of the remaining terms on the right-hand side of \eqref{longQ} can be shown even simpler:
\begin{align*}
    \lim_{m\to \infty} \int^0_{-k} & e^{2\sigma \xi}\int_{\mathbb{R}^n}z(\theta_\xi\omega)u(\xi, \omega, -k, u(-k, \omega, -t_m, u_{0, m}))\, dx\, d\xi  \\
    &=\int^0_{-k}e^{2\sigma \xi}\int_{\mathbb{R}^n}z(\theta_\xi\omega)u(\xi, \omega, -k, \tilde{u}_k (\om))\, dx\, d\xi, \\
    \lim_{m\to \infty} \int^0_{-k} & e^{2\sigma \xi}\int_{\mathbb{R}^n} \nabla z(\theta_\xi\omega) \nabla u(\xi, \omega, -k, u(-k, \omega, -t_m, u_{0, m}))\, dx\, d\xi  \\
    &= \int^0_{-k}e^{2\sigma \xi}\int_{\mathbb{R}^n}\nabla z(\theta_\xi\omega) \nabla u(\xi, \omega, -k, \tilde{u}_{k} (\om))\, dx\, d\xi, \\
    \lim_{m\to \infty}\int^0_{-k} & e^{2\sigma \xi}\int_{\mathbb{R}^n} z(\theta_\xi\omega) f(x, u(\xi, \omega, -k, u(-k, \omega, -t_m, u_{0, m})))\, dx\, d\xi  \\
    &= \int^0_{-k} e^{2\sigma \xi}\int_{\mathbb{R}^n} z(\theta_\xi\omega) f(x, u(\xi, \omega, -k, \tilde{u}_{k}(\om)))\, dx\, d\xi,  \\
    \lim_{m\to \infty} \int^0_{-k} & e^{2\sigma \xi}\int_{\mathbb{R}^n} z(\theta_\xi\omega) v(\xi, \omega, -k, v(-k, \omega, -t_m, v_{0, m}))\, dx\, d\xi  \\
    &=\int^0_{-k} e^{2\sigma \xi}\int_{\mathbb{R}^n}  z(\theta_\xi\omega) v(\xi, \omega, -k, \tilde{v}_{k}(\om))\, dx\, d\xi,  \\
    \lim_{m\to \infty} \int^0_{-k} & e^{2\sigma \xi}\int_{\mathbb{R}^n} g(x) \, v(\xi, \omega, -k, v(-k, \omega, -t_m, v_{0, m}))\, dx\, d\xi  \\
    &= \int^0_{-k} e^{2\sigma \xi}\int_{\mathbb{R}^n} g(x)\, v(\xi, \omega, -n, \tilde{v}_{k}(\om))\, dx\, d\xi.
\end{align*}

STEP 4.
Take the limit of \eqref{longQ} as $m \to \infty$ and assemble together the results shown above in the items 1) through 5) of Step 3. Then we get

\begin{equation}\label{LQ}
    \begin{split}
    &\limsup_{m\rightarrow \infty} \, Q(u(0, \omega, -t_m, u_{0, m}),  v(0, \omega, -t_m, v_{0, m}))  \\
    \leq &\, e^{-\sigma k} \left(R(\om) + 1 + \frac{1}{\sigma} \left[4C_5 r_0 (\om) + C_6 + \frac{\|g\|^2}{\beta - \delta}\right]\right)   \\
    &\, -2(\beta -\delta-\sigma)\int^0_{-k} e^{2\sigma \xi}\|v(\xi, \omega, -k, \tilde{v}_{k}(\om))\|^2\, d\xi   \\
    &-2(\delta-\sigma) \left(\alpha+\delta^2-\beta\delta\right)\int^0_{-k} e^{2\sigma \xi}\|u(\xi, \omega, -k, \tilde{u}_{k}(\om))\|^2\, d\xi   \\
    &- 2(\delta-\sigma) \int^0_{-k} e^{2\sigma \xi}\|\nabla u(\xi, \omega, -k, \tilde{u}_{k}(\om))\|^2\, d\xi   \\
    &+ 4\sigma \int^0_{-k} e^{2\sigma \xi}\int_{\mathbb{R}^n} (F(x, u(\xi, \omega, -k, \tilde{u}_k (\om))) + \phi_3 (x))\, dx\,d\xi  \\
    &-2\delta\int^0_{-k} e^{2\sigma \xi}\int_{\mathbb{R}^n} f(x, u(\xi, \omega, -k, \tilde{u}_{k}(\om))) u(\xi, \omega, -k, \tilde{u}_{k}(\om))\, dx\,d\xi  \\
    &+2\varepsilon\left(\alpha+\delta^2 - \beta\delta\right) \int^0_{-k} e^{2\sigma \xi}\int_{\mathbb{R}^n}z(\theta_\xi\omega)u(\xi, \omega, -k, \tilde{u}_k (\om))\, dx\,d\xi  \\
    &+2\varepsilon\int^0_{-k} e^{2\sigma \xi}\int_{\mathbb{R}^n}\nabla z(\theta_\xi\omega) \nabla u(\xi, \omega, -k, \tilde{u}_{k}(\om))\, dx\,d\xi  \\
    &+2\varepsilon\int^0_{-k} e^{2\sigma \xi}\int_{\mathbb{R}^n}  z(\theta_\xi\omega) f(x, u(\xi, \omega, -k, \tilde{u}_{k}(\om)))\, dx\, d\xi  \\
    &+(4\delta\varepsilon-2\beta\varepsilon)\int^0_{-k} e^{2\sigma \xi}\int_{\mathbb{R}^n}  z(\theta_\xi\omega) v(\xi, \omega, -k, \tilde{v}_{k}(\om))\, dx\,d\xi   \\
    &+2\int^0_{-k} e^{2\sigma \xi}\int_{\mathbb{R}^n} g(x)\, v(\xi, \omega, -k, \tilde{v}_{k}(\om)) \,dx\,d\xi.
    \end{split}
\end{equation}
It follows from \eqref{Quvk} and \eqref{LQ} that
\begin{equation} \label{Qk}
    \begin{split}
    &\limsup_{m\to \infty}\, Q (u(0, \omega, -t_m, u_{0, m}),  v(0, \omega, -t_m, v_{0, m}))  \\
    \leq&\, e^{-\sigma k} \left(R(\om) + 1 + \frac{1}{\sigma} \left[4C_5 r_0 (\om) + C_6 + \frac{\|g\|^2}{\beta - \delta}\right]\right)   \\
    &\, +\int^0_{-k} e^{2\sigma \xi}\, G(u(\xi, \omega, -k, \tilde{u}_k), v(\xi, \omega, -k, \tilde{v}_k))\, d\xi,  \\
    =&\, e^{-\sigma k} \left(R(\om) + 1 + \frac{1}{\sigma} \left[4C_5 r_0 (\om) + C_6 + \frac{\|g\|^2}{\beta - \delta}\right]\right)   \\
    &\, + Q (\tilde{u}(\om), \tilde{v}(\om)) - e^{-2\sigma k} Q(\tilde{u}_k (\om), \tilde{v}_k (\om))  \\[4pt]
    \leq&\, e^{-\sigma k} \left(R(\om) + 1 + \frac{1}{\sigma} \left[4C_5 r_0 (\om) + C_6 + \frac{\|g\|^2}{\beta - \delta}\right]\right) +Q(\tilde{u}(\om), \tilde{v}(\om)),
    \end{split}
\end{equation}
because $- e^{-2\sigma k} Q(\tilde{u}_k (\om), \tilde{v}_k (\om)) \leq 0$ due to \eqref{35} and \eqref{511}. Take limit $k\rightarrow \infty$ to obtain

\begin{eqnarray} \label{Qlim}
    \limsup_{m\rightarrow \infty} \, Q\,(u(0, \omega, -t_m, u_{0, m}),  v(0, \omega, -t_m, v_{0, m})) \leq Q\,(\tilde{u}(\om), \tilde{v}(\om)).
\end{eqnarray}
On the other hand, from \eqref{510}, \eqref{Fae} and \eqref{Fxu} it follows that
\begin{equation}  \label{FF}
    \lim_{m\to \infty} \int_{\mathbb{R}^n}F(x, u(0, \omega, -t_m, u_{0, m}))\, dx =  \int_{\mathbb{R}^n}F(x, \tilde{u})\, dx,
\end{equation}
which along with \eqref{Qlim} shows that
\begin{equation}  \label{Nm}
    \begin{split}
    &\limsup_{m\to \infty} \, \left(\|v(0, \omega, -t_m, v_{0, m})\|^2 + (\alpha+\delta^2-\beta \delta) \|u(0, \omega, -t_m, u_{0, m})\|^2 \right.  \\[3pt]
    &\left. +\, \|\nabla u(0, \omega, -t_m, u_{0, m})\|^2 \right) \leq \|\tilde{v}\|^2 +(\alpha+\delta^2 - \beta \delta) \|\tilde{u}\|^2 +\|\nabla \tilde{u}\|^2.
    \end{split}
\end{equation}

STEP 5. Note that the norm of $H^1 (\mbR) \times L^2 (\mbR)$ is equivalent to
$$
    \|(u,v)\|_\Pi \overset{\textup{def}}{=} Q(u,v) - 2 \int_{\mbR} (F(x,u) + \phi_3 (x))\, dx = \|v\|^2 + (\alpha + \delta^2 - \beta \delta) \|u\|^2 + \|\nabla
    u\|^2.
$$
Same as the second inequality in \eqref{tn}, from the weak convergence shown by \eqref{twc}, for any $g_{0,m} = (u_{0,m}, v_{0,m}) \in B(\theta_{-t_m} \om)$ we have
\begin{equation*}  
    \liminf_{m\to \infty} \, \| \Phi (t_m, \theta_{-t_m} \om, g_{0,m} )\|_\Pi  \geq \|(\tilde{u} (\om), \tilde{v}(\om))\|_\Pi.
\end{equation*}
Meanwhile, \eqref{Nm} implies that
\begin{equation*} 
    \limsup_{m\to \infty} \, \| \Phi (t_m, \theta_{-t_m} \om, g_{0,m})\|_\Pi  \leq \|(\tilde{u} (\om), \tilde{v}(\om))\|_\Pi.
\end{equation*}
It follows that
\begin{equation} \label{Nmc}
    \lim_{m\to \infty} \, \| \Phi (t_m, \theta_{-t_m} \om, g_{0,m} )\|_\Pi  = \|(\tilde{u} (\om), \tilde{v}(\om))\|_\Pi.
\end{equation}
Finally, for the Hilbert space $H^1 (\mbR) \times L^2 (\mbR)$, the weak convergence \eqref{twc} and the norm convergence \eqref{Nmc} imply the strong convergence. Therefore, up to a finite steps of subsequence selections always relabeled as the same in this proof, we reach the conclusion that
\begin{equation*}
    \Phi(t_m, \theta_{-t_m}\omega, g_{0,m} ) \rightarrow (\tilde{u}, \tilde{v}) \quad \textup{strongly in} \;\, H^1 (\mbR) \times L^2 (\mbR).
\end{equation*}
Thus the proof is completed.
\end{proof}

\section{\large{\bf{The Existence of Random Attractor}}}

In this section we shall first prove an instrumental convergence theorem in the space $L^p (X, \mathcal{M}, \mu)$ of Vitali type. It will pave the way to prove pullback asymptotic compactness of the cocycle $\Phi$ in the space $L^p (\mbR)$ for $2 < p < \infty$. This is the crcucial and final step to accomplish the proof of the existence of a random attractor for this random dynamical system $\Phi$ generated by the stochastic wave equation \eqref{1}-\eqref{2}.
\begin{theorem} \label{TVLp}
     Let $(X, \mathcal{M}, \mu)$ be a $\sigma$-finite measure space and assume that a sequence $\{f_m\}_{m=1}^\infty \subset L^p (X, \mathcal{M}, \mu)$ with $1 \leq p < \infty$ satisfies
\begin{equation} \label{ael}
    \lim_{m\to \infty} f_m (x) = f(x), \;\; \textup{a.e.}
\end{equation}
Then $f \in L^p (X, \mathcal{M}, \mu)$ and
\begin{equation} \label{fnf}
    \lim_{m \to \infty} \|f_m - f \|_{L^p (X, \mathcal{M},\, \mu)} = 0
\end{equation}
if and only if the following two conditions are satisfied\textup{:}

\textup{(a)} For any given $\varepsilon > 0$, there exists a set $A_\varepsilon \in \mathcal{M}$ such that $\mu (A_\varepsilon) < \infty$ and
\begin{equation} \label{conda}
    \int_{X \backslash A_\varepsilon} |f_m (x)|^p\, d\mu < \varepsilon, \quad \textup{for all} \;\, m \geq 1.
\end{equation}

\textup{(b)} The absolutely continuous property of the $L^p$ integrals of the functions in the sequence is satisfied uniformly,
\begin{equation} \label{condb}
    \lim_{\mu (Y) \to 0} \int_Y |f_m (x)|^p\, d\mu = 0, \quad \textup{uniformly in} \;\, m \geq 1.
\end{equation}
\end{theorem}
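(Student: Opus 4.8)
The plan is to recognize Theorem~\ref{TVLp} as the $\sigma$-finite version of the classical Vitali convergence theorem and to prove the two implications separately. The forward implication (norm convergence forces (a) and (b)) is routine; the reverse implication carries the content and turns on Egorov's theorem. Throughout I use the elementary inequality $|a+b|^p\le 2^{p-1}(|a|^p+|b|^p)$, valid for $p\ge 1$, and the fact from \eqref{ael} that $f$ is measurable and a.e.\ finite.

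\emph{Necessity.} Suppose $f\in L^p(X,\mathcal M,\mu)$ and $\|f_m-f\|_{L^p}\to 0$; fix $\varepsilon>0$ and pick $N$ with $\|f_m-f\|_{L^p}^p<\varepsilon$ for $m>N$. For \textup{(a)}: by $\sigma$-finiteness write $X=\bigcup_k X_k$ with $X_k\uparrow X$ and $\mu(X_k)<\infty$; dominated convergence gives $\int_{X\setminus X_k}|g|^p\,d\mu\to 0$ for every fixed $g\in L^p$, so choosing $k$ large enough simultaneously for $f$ and for each of $f_1,\dots,f_N$ and setting $A_\varepsilon=X_k$, the choice of $A_\varepsilon$ handles $m\le N$ directly while $|a+b|^p\le 2^{p-1}(|a|^p+|b|^p)$ handles the tail $m>N$. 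For \textup{(b)}: the absolute continuity of the integrals of the single $L^1$-functions $|f|^p,|f_1|^p,\dots,|f_N|^p$ furnishes one common $\delta>0$, and the same inequality again disposes of $m>N$. (After replacing $\varepsilon$ by a fixed multiple of itself one obtains the stated strict inequalities.)

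\emph{Sufficiency.} Assume \textup{(a)} and \textup{(b)}, and fix $\varepsilon>0$. By \textup{(a)} choose $A\in\mathcal M$ with $\mu(A)<\infty$ and $\int_{X\setminus A}|f_m|^p\,d\mu<\varepsilon$ for all $m$; Fatou's lemma applied to $|f_m|^p\chi_{X\setminus A}$ then gives $\int_{X\setminus A}|f|^p\,d\mu\le\varepsilon$. By \textup{(b)} choose $\delta>0$ so that $\mu(Y)<\delta$ implies $\int_Y|f_m|^p\,d\mu<\varepsilon$ for all $m$. Since $\mu(A)<\infty$ and $f_m\to f$ a.e.\ on $A$ with $f$ a.e.\ finite, Egorov's theorem produces a measurable $E\subseteq A$ with $\mu(A\setminus E)<\delta$ on which $f_m\to f$ uniformly; in particular, Fatou again gives $\int_{A\setminus E}|f|^p\,d\mu\le\liminf_m\int_{A\setminus E}|f_m|^p\,d\mu\le\varepsilon$, and for $m$ large $\int_E|f_m-f|^p\,d\mu\le\mu(A)\big(\sup_E|f_m-f|\big)^p<\infty$ with $\sup_E|f_m-f|\to 0$. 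Decomposing
\[
\int_X|f_m-f|^p\,d\mu=\Big(\int_{X\setminus A}+\int_{A\setminus E}+\int_E\Big)|f_m-f|^p\,d\mu,
\]
each of the first two integrals is at most $2^{p-1}\big(\int|f_m|^p\,d\mu+\int|f|^p\,d\mu\big)\le 2^p\varepsilon$ over its domain (using $\int_{X\setminus A}|f_m|^p<\varepsilon$, respectively $\mu(A\setminus E)<\delta$, together with the Fatou bounds on $|f|^p$), while the third tends to $0$; since all three pieces are finite for $m$ large, $f=(f-f_m)+f_m\in L^p$. Hence $\limsup_{m\to\infty}\|f_m-f\|_{L^p}^p\le 2^{p+1}\varepsilon$, and as $\varepsilon>0$ was arbitrary we obtain \eqref{fnf}.

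\emph{Main obstacle.} The delicate point lies entirely in the sufficiency direction: one must split $X$ into the tail $X\setminus A$ (controlled by \textup{(a)}), a thin collar $A\setminus E$ (controlled by the uniform absolute continuity \textup{(b)} plus Fatou for the limit $f$), and the ``good'' set $E$ on which Egorov upgrades pointwise a.e.\ convergence to uniform convergence over a finite-measure set. The care required is to fix $\delta$ from \textup{(b)} \emph{before} invoking Egorov, so that the collar estimate $\int_{A\setminus E}|f_m|^p\,d\mu<\varepsilon$ holds simultaneously for all $m$; the membership $f\in L^p$ is then not an extra hypothesis but a byproduct of the same three-piece estimate.
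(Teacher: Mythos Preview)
Your proof is correct and follows essentially the same route as the paper: both prove necessity by splitting at an index $N$ and handling $m\le N$ directly and $m>N$ via $|f_m|^p\le 2^{p-1}(|f_m-f|^p+|f|^p)$, and both prove sufficiency via the same three-set decomposition (tail controlled by \textup{(a)}, thin collar controlled by \textup{(b)} plus Fatou for $f$, good set from Egorov). The only cosmetic differences are that the paper first reduces to an arbitrary finite-measure set before applying Egorov and uses Minkowski on norms at the end, whereas you decompose $X$ directly and work with the $p$-th-power inequality throughout.
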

\begin{proof}
    First we prove the necessity. Statement (a): Under the condition \eqref{fnf}, for an arbitrarily given $\varepsilon > 0$ there exists an integer $N = N(\varepsilon) \geq 1$ such that
\begin{equation} \label{eN}
    \|f_m - f\|_{L^p (X, \mathcal{M},\, \mu)}^p < \frac{\varepsilon}{2^p}, \quad \textup{for all} \;\, m > N.
\end{equation}
Since $f \in L^p (X, \mathcal{M}, \mu)$, there exist measurable sets $B_\varepsilon$ and $S_\varepsilon$ both of finite measure, such that
\begin{equation} \label{BS}
    \int_{X \backslash B_\varepsilon} |f(x)|^p\, d\mu < \frac{\varepsilon}{2^p} \quad \textup{and} \quad \int_{X \backslash S_\varepsilon} |f_m (x)|^p\, d\mu < \varepsilon, \;\; \textup{for} \;\, m = 1, \cdots, N.
\end{equation}
Put $A_\varepsilon = B_\varepsilon \cup S_\varepsilon$. Then $\mu (A_\varepsilon) < \infty$ and we have
\begin{equation*} \label{aA}
    \begin{split}
    &\int_{X \backslash A_\varepsilon} |f_m (x)|^p\, d\mu = \int_{X \backslash A_\varepsilon} (|f_m (x) - f(x)| + |f(x)|)^p\, d\mu  \\
    \leq &\, 2^{p-1} \left(\int_{X} |f_m (x) - f(x)|^p\, d\mu + \int_{X \backslash B_\varepsilon} |f(x)|^p\, d\mu\right) < \frac{\varepsilon}{2} + \frac{\varepsilon}{2} = \varepsilon, \;\; \textup{for}\;\, m > N.
    \end{split}
\end{equation*}
Besides it follows from the second inequality in \eqref{BS} that
$$
    \int_{X \backslash A_\varepsilon} |f_m (x)|^p\, d\mu \leq \int_{X \backslash S_\varepsilon} |f_m (x)|^p\, d\mu < \varepsilon, \quad \textup{for} \;\, m = 1, \cdots , N.
$$
Therefore, the statement (a) is valid.

Statement (b): By the absolutely continuous property of Lebesgue
integral on a $\sigma$-finite measure space, for any given
$\varepsilon > 0$, there exists $\delta_0 = \delta_0 (\varepsilon) >
0$ such that whenever $\mu (Y) < \delta_0$ one has
\begin{equation} \label{AC}
    \int_Y |f(x)|^p \, d\mu < \frac{\varepsilon}{2^p} \quad \textup{and} \quad \int_Y |f_m (x)|^p\, d\mu < \varepsilon, \quad \textup{for} \;\, m = 1, \cdots, N,
\end{equation}
where $N = N(\varepsilon)$ is the same integer in \eqref{eN}. Then
for any measurable set $Y \subset X$ with $\mu (Y) < \delta_0$ one
also has
$$
    \int_Y |f_m (x)|^p\, d\mu \leq 2^{p-1} \left(\int_{X} |f_m (x) - f(x)|^p\, d\mu + \int_{Y} |f(x)|^p\, d\mu\right) < \varepsilon, \quad \textup{for} \;\, m > N.
$$
Thus the statement (b) is also valid.

Next we prove the sufficiency. Suppose the two conditions (a) and
(b) are satisfied. First of all, by the condition (a) and Fatou's
Lemma, for an arbitrarily given $\varepsilon > 0$ there exists a set
$A_\varepsilon$ of finite maesure with
$$
    \sup_{m\geq 1} \, \int_{X \backslash A_\varepsilon} |f_m (x)|^p\, d\mu < \varepsilon,
$$
which implies that the limit function $f$ in the assumption \eqref{ael} satisfies
\begin{equation} \label{Ftou}
     \int_{X \backslash A_\varepsilon} |f (x)|^p\, d\mu \leq \liminf_{m\to \infty} \int_{X \backslash A_\varepsilon} |f_m (x)|^p\, d\mu < \varepsilon.
\end{equation}
Hence it follows that
\begin{equation} \label{fint}
    f \in L^p (X \backslash A_\varepsilon)  \quad \textup{and} \quad \|f_m - f\|_{L^p (X \backslash A_\varepsilon)} < 2\varepsilon^{1/p}, \quad \textup{for all} \;\, m \geq 1.
\end{equation}
Therefore, the proof of $f \in L^p (X, \mathcal{M}, \mu)$ and \eqref{fnf} is reduced to proving that
\begin{equation} \label{fmfA}
    f \in L^p (Y) \quad \textup{and}  \quad \lim_{m \to \infty} \|f_m - f \|_{L^p (U)} = 0,
\end{equation}
for any given measurable set $Y \subset X$ with $\mu (Y) < \infty$.

Then by the condition (b), for any given $\varepsilon > 0$, there
exists $\delta_1 = \delta_1 (\varepsilon) > 0$ such that for any $S
\subset X$ with $\mu (S) < \delta_1$ one has
\begin{equation} \label{Sb}
    \int_S \, |f_m (x)|^p\, d\mu < \varepsilon^p, \quad \textup{uniformly in } \;\, m \geq 1.
\end{equation}
Consequently, by Fatou's lemma,
\begin{equation} \label{FS}
    \int_S \, |f (x)|^p\, d\mu \leq \liminf_{m\to\infty} \int_S \, |f_m (x)|^p\, d\mu < \varepsilon^p.
\end{equation}
By Egorov's theorem on Lebesgue integral over such a set $Y$ of
finite measure in the space $(X, \mathcal{M}, \mu)$, there exists a
measurable subset $B \subset Y$ with $\mu (Y \backslash B) <
\delta_1$ such that
\begin{equation*} 
    \lim_{m\to \infty} f_m (x) = f(x), \;\; \textup{uniformly  a.e. on} \; B,
\end{equation*}
so that there exists an integer $m_0 = m_0 (\varepsilon) \geq 1$ such that
\begin{equation} \label{m0}
    \|f_m - f\|_{L^p (B)} < \varepsilon, \quad \textup{for all} \;\, m \geq m_0.
\end{equation}
Combining \eqref{Sb}, \eqref{FS} and \eqref{m0}, we see that
\begin{equation*}
    \|f_m - f\|_{L^p (Y)} \leq \|f_m \|_{L^p (Y \backslash B)} + \|f \|_{L^p (Y \backslash B)} + \|f_m - f \|_{L^p (B)} < 3\varepsilon, \;\; \textup{for} \;\; m \geq m_0.
\end{equation*}
Therefore, \eqref{fmfA} is proved. The proof is completed.
\end{proof}

Finally we present and prove the main result of this work on the existence of a global attractor for this random dynamical system $\Phi$ on the product Banach space with arbitrary exponent and arbitrary space dimension.

\begin{theorem} \label{MT}
    Under the Assumptions \textup{I, II} and \textup{III}, the random dynamical system $\Phi$ generated by the damped stochastic wave equation \textup{(1.1)} on the Banach space $E= (H^1(\mathbb{R}^n) \cap L^p (\mathbb{R}^n)) \times L^2(\mathbb{R}^n)$ over the metric dynamical space $(\gw, \mathcal{F}, P, (\theta_t)_{t\in \mathbb{R}})$ has  a unique $\msd$-pullback random attractor $\mathcal{A} = \{\mathcal{A}(\omega)\}_{\om\in\gw} \in \msd$.
 \end{theorem}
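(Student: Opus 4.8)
The plan is to apply Theorem \ref{T1}, for which it suffices to exhibit a closed $\msd$-pullback absorbing set for $\Phi$ in $E$ and to establish the $\msd$-pullback asymptotic compactness of $\Phi$ in $E$. The first ingredient is immediate: by Lemma \ref{L41} the ball $K(\om)=B_E(0,R(\om))$ is a $\msd$-pullback absorbing set, hence so is its closure $\overline{K(\om)}=\overline{B_E(0,R(\om))}$, which is a closed random set still belonging to $\msd$ (its radius is the tempered variable $R(\om)$ of \eqref{Kom}) and which absorbs every $B\in\msd$. Thus the whole content of the theorem is the asymptotic compactness of $\Phi$ in the product space $E=(H^1(\mbR)\cap L^p(\mbR))\times L^2(\mbR)$; in contrast with Theorem \ref{T51}, the exponent $p$ may exceed the Sobolev exponent, so strong convergence in $H^1(\mbR)\times L^2(\mbR)$ alone does not yield strong convergence of the first component in $L^p(\mbR)$. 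This is precisely the gap closed by Theorem \ref{TVLp}, the tail estimate of Lemma \ref{L42}, and the convergence of the nonlinear part of the energy functional obtained in the proof of Theorem \ref{T51}.

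Accordingly, fix $\om\in\gw$, $B\in\msd$, $t_m\to\infty$ and $g_{0,m}=(u_{0,m},v_{0,m})\in B(\theta_{-t_m}\om)$, and set $(u_m,v_m):=\Phi(t_m,\theta_{-t_m}\om,g_{0,m})$. By Theorem \ref{T51}, along a subsequence relabeled the same, $(u_m,v_m)\to(\tilde u(\om),\tilde v(\om))$ strongly in $H^1(\mbR)\times L^2(\mbR)$, and along the same subsequence the convergence \eqref{FF} of the term $\int_{\mbR}F(x,u_m)\,dx$ holds (see the proof of Theorem \ref{T51}); moreover $(\tilde u,\tilde v)\in E$, since the uniform bound \eqref{Cr} and the reflexivity of $L^p(\mbR)$ force $u_m\rightharpoonup\tilde u$ in $L^p(\mbR)$. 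Strong convergence in $H^1(\mbR)$ and item 1 of Lemma \ref{L51} give, after one more subsequence, $u_m(x)\to\tilde u(x)$ for a.e. $x\in\mbR$. It therefore remains only to prove $\|u_m-\tilde u\|_{L^p}\to 0$; once this is known, $(u_m,v_m)\to(\tilde u,\tilde v)$ in $E$ and the defining property of $\msd$-pullback asymptotic compactness in Definition \ref{D4} is verified.

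To obtain the $L^p$-convergence I apply Theorem \ref{TVLp} with $f_m=u_m$ on $\mbR$ equipped with Lebesgue measure, the a.e. convergence $u_m\to\tilde u$ being already in hand; it remains to verify conditions (a) and (b). Condition (a) is the uniform tightness supplied by Lemma \ref{L42}: since the $E$-tail norm there dominates $\|u_m\|_{L^p(\mbR\backslash B_r)}$, given $\varepsilon>0$ one picks $r$ and a time threshold so that $\int_{|x|>r}|u_m|^p\,dx<\varepsilon$ for all large $m$, then enlarges $r$ to absorb the finitely many remaining indices (each $u_m\in L^p$), so that $A_\varepsilon=B_r$ works. For condition (b), by \eqref{35} the functions $g_m:=F(x,u_m)+\phi_3$ satisfy $g_m\geq C_3|u_m|^p\geq 0$, converge a.e. to $g:=F(x,\tilde u)+\phi_3\in L^1(\mbR)$, and by \eqref{FF} satisfy $\int_{\mbR}g_m\,dx\to\int_{\mbR}g\,dx$; Scheff\'{e}'s lemma then gives $g_m\to g$ in $L^1(\mbR)$, hence $\{g_m\}$ is uniformly absolutely continuous, and since $0\leq|u_m|^p\leq C_3^{-1}g_m$ so is $\{|u_m|^p\}$, which is exactly condition (b) (and also re-proves (a)). Theorem \ref{TVLp} now yields $\|u_m-\tilde u\|_{L^p}\to 0$, completing the verification that $\Phi$ is $\msd$-pullback asymptotically compact in $E$.

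With the closed $\msd$-pullback absorbing set $\{\overline{K(\om)}\}_{\om\in\gw}$ and this asymptotic compactness, Theorem \ref{T1} applies directly and gives a unique $\msd$-pullback random attractor $\msa=\{\msa(\om)\}_{\om\in\gw}\in\msd$, represented by \eqref{21} with $K$ replaced by $\overline{K}$. The one genuinely delicate step is the one indicated above: passing from the Hilbert-space compactness of Theorem \ref{T51} to compactness in $E$ for an \emph{arbitrary} exponent $p$, which cannot rely on Sobolev embeddings and is instead achieved by feeding the tightness of Lemma \ref{L42} and the equi-integrability extracted, via Scheff\'{e}'s lemma, from the convergence \eqref{FF} of the nonlinear energy term into the Vitali-type criterion of Theorem \ref{TVLp}.
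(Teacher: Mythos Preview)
Your proposal is correct and follows the same overall architecture as the paper: invoke Lemma~\ref{L41} for the absorbing set, use Theorem~\ref{T51} for compactness in $H^1\times L^2$, extract a.e.\ convergence, and then apply the Vitali-type criterion Theorem~\ref{TVLp} to upgrade to $L^p$-convergence, with Lemma~\ref{L42} supplying the tightness condition~(a).

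The genuine difference lies in how you verify the uniform absolute continuity condition~(b). The paper re-derives the energy inequality \eqref{sgre} localized to an arbitrary measurable set $Y$, obtaining a $Q_Y$-functional estimate \eqref{QYq}--\eqref{upcv} whose right-hand side involves only integrals over $Y$ of the fixed data $g,\phi_1,\phi_2,\phi_3,h_j$; absolute continuity of these finitely many Lebesgue integrals, together with the tempered decay of the initial data, then forces $\int_Y|u_m|^p\,dx$ small uniformly in $m$. Your route is shorter and more conceptual: you observe that $g_m:=F(x,u_m)+\phi_3\geq C_3|u_m|^p\geq 0$ converges a.e.\ and, by \eqref{FF} (established inside the proof of Theorem~\ref{T51}), has convergent integrals, so Scheff\'e's lemma yields $g_m\to g$ in $L^1(\mbR)$; uniform integrability of $\{g_m\}$ then dominates that of $\{|u_m|^p\}$, giving~(b) (and indeed~(a)) at once. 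This is legitimate and elegant, since it recycles the work already done for \eqref{FF} rather than repeating an energy computation. The paper's approach, by contrast, is more self-contained and makes explicit exactly which structural features of the data (the integrability of $g,\phi_i,h_j$) drive the equi-integrability, at the cost of a longer calculation.
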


\begin{proof}
Lemma \ref{L41} shows that there exists a $\msd$-pullback random absorbing set, the $K = \{B_E (0, R(\om))\}_{\om\in\gw}$ in the space $E$ for the cocycle $\Phi$. Thus it suffices to prove that the cocycle $\Phi$ is $\msd$-pullback asymptotically compact in $E$.

(1) Theorem \ref{T51} shows that for every $B=\{B(\omega)\}_{\omega\in \Omega}\in \msd$ and any sequence
$$
    \{\Phi (t_m, \theta_{-t_m} \om, g_{0,m})\}_{m=1}^\infty,
$$
where $t_m \to \infty$ and $g_{0,m} = (u_{0,m}, v_{0,m})\in B(\theta_{-t_m}\omega)$,
along a pullback quasi-trajectory of the cocycle $\Phi$ has a subsequence, which is denoted by the same, such that
\begin{equation} \label{ph2}
    \Phi(t_m, \theta_{-t_m} \omega, g_{0,m}) \longrightarrow (\tilde{u}(\om), \tilde{v}(\om)) \quad \textup{strongly in} \;\, H^1 (\mbR) \times L^2 (\mbR),
\end{equation}
and consequently
\begin{equation} \label{php}
    \mathbb{P}_u \, \Phi(t_m, \theta_{-t_m}\omega, g_{0,m}) \longrightarrow \tilde{u}(\om) \quad \textup{strongly in} \;\, L^2 (\mbR).
\end{equation}
Here $\mathbb{P}_u: (u, v) \mapsto u$ is the projection.

(2) Applying the first item in Lemma \ref{L51} to the space $L^2 (\mbR)$, it follows from \eqref{ph2} that there exists a subsequence $\{\Phi(t_{m_k}, \theta_{-t_{m_k}}\omega, g_{0, m_k})\}_{k=1}^\infty
$ of $\{\Phi(t_m, \theta_{-t_m}\omega, g_{0, m})\}_{m=1}^\infty$, such that
\begin{equation} \label{phae}
    \lim_{k\to\infty} \Phi(t_{m_k}, \theta_{-t_{m_k}}\omega, g_{0, m_k})(x) = (\tilde{u}(\om)(x), \tilde{v}(\om)(x)), \;\; \textup{a.e. in} \;\, \mbR.
\end{equation}
Hence we have
\begin{equation} \label{Puae}
     \lim_{k\to\infty} \mathbb{P}_u \Phi (t_{m_k}, \theta_{-t_{m_k}}\omega, g_{0, m_k}) (x) = \tilde{u}(\om)(x), \;\; \textup{a.e. in} \;\, \mbR.
\end{equation}
Therefore, the assumption \eqref{ael} in Theorem \ref{TVLp} is satisfied by the sequence of functions $\{\mathbb{P}_u \Phi (t_{m_k}, \theta_{-t_{m_k}}\omega, g_{0, m_k}) (x)\}_{k=1}^\infty$ in $L^p (\mbR)$.

(3) By Lemma \ref{L42}, for a.e. $\om \in \gw$ and any $\varepsilon > 0$, there exists an integer $k_0 = k_0 (B, \om, \varepsilon) > 0$ and $V = V(\om, \varepsilon) \geq 1$ such that for all $k > k_0$ one has
\begin{equation} \label{Va}
    \int_{\mbR \backslash B_V} |\mathbb{P}_u \Phi (t_{m_k}, \theta_{-t_{m_k}}\omega, g_{0, m_k}) (x)|^p \, dx \leq \interleave \Phi(t_{m_k}, \theta_{-t_{m_k}}\omega, g_{0, m_k}) \interleave^p_{E(\mbR \backslash B_V)}  < \varepsilon,
\end{equation}
for any $g_{0, m_k} \in B(\theta_{-t_{m_k}} \om)$, where $B_V$ is the ball centered at the origin with radius $V$ in $\mbR$. Then there exists $V_0 = V_0 (\om, \varepsilon) > 0$ such that
\begin{equation} \label{V0}
    \int_{\mbR \backslash B_{V_0}} |\mathbb{P}_u \Phi (t_{m_k}, \theta_{-t_{m_k}}\omega, g_{0, m_k}) (x)|^p \, dx < \varepsilon, \quad \textup{for} \;\, k = 1, \cdots, k_0.
\end{equation}
Thus \eqref{Va} and \eqref{V0} confirm that with $A_\varepsilon = B_{\max \{V, V_0\}}$ the condition (a) in Theorem \ref{TVLp} is satisfied by the sequence of functions $\{\mathbb{P}_u \Phi (t_{m_k}, \theta_{-t_{m_k}}\omega, g_{0, m_k}) (x)\}_{k=1}^\infty$ in $L^p (\mbR)$.

(4) Finally we prove that the uniform absolutely continuous condition (b) of Theorem \ref{TVLp} is also satisfied by the sequence of functions $\{\mathbb{P}_u \Phi (t_{m_k}, \theta_{-t_{m_k}}\omega, g_{0, m_k}) (x)\}_{k=1}^\infty$ in $L^p (\mbR)$. 

According to the Assumption II, for any measurable set $Y \subset \mbR$, we have
$$
    C_3 \int_Y |u|^p \, dx \leq \int_Y (F(x,u) + \phi_3 (x))\, dx \leq Q_Y (u, v),  \quad \textup{for} \;\, (u,v) \in E,
$$
where $Q_Y (u,v)$ is analogous to \eqref{511} and defined by

\begin{equation} \label{QY}
    Q_Y (u, v)=\|v\|_{L^2(Y)}^2+ \left(\alpha+\delta^2-\beta \delta\right)\|u\|_{L^2(Y)}^2 + \|\nabla u\|_{L^2(Y)}^2+2\int_{Y} (F(x, u) + \phi_3 (x))\, dx.
\end{equation}
We integrate the inequality \eqref{sgre} over the time interval $[-t_m, 0]$ to get
\begin{equation} \label {QYq}
    \begin{split}
    &\, Q_Y (u(0, \om, -t_m, u_{0,m}), v(0, \om, -t_m, v_{0,m})) \\
    \leq &\, e^{-\sigma t_m} Q_Y ((u_{0,m}, v_{0,m})) + \int_{-t_m}^0 e^{\sigma t} \left(2\Gamma_1^Y (\theta_t \om) + C_6 (Y) + \frac{\|g\|_{L^2 (Y)}^2}{\beta - \delta}\right) dt,
    \end{split}
\end{equation}
where, in view of \eqref{413} and the set-up of the constants $C_4$ and $C_6$ in Section 3.1, we have
\begin{equation*}
    \Gamma_1^Y (\theta_t \om) = C_0 \left(\|z(\ztw)\|_{L^2 (Y)}^2 + \|\nabla z(\ztw)\|_{L^2 (Y)}^2 + \|z(\ztw)\|_{L^p (Y)}^p\right)
\end{equation*}
and
\begin{equation} \label{C6Y}
    \begin{split}
    C_6(Y) &= 2\left(\delta C_2 - \frac{\varepsilon C_1 (p-1)}{C_3 p} \right)\|\phi_3 \|_{L^1 (Y)} + \varepsilon \|\phi_1 \|_{L^2 (Y)}^2  \\
    &\, - \delta \|\phi_2 \|_{L^1 (Y)} + \frac{\varepsilon C_1 (p-1)}{C_3 p} \|\phi_3 \|_{L^1 (Y)} \leq \varepsilon \|\phi_1 \|_{L^2 (Y)}^2 + 2\delta C_2 \|\phi_3 \|_{L^1 (Y)}.
\end{split}
\end{equation}
Note that $z(\ztw) = \sum_{j=1}^m h_j (x) z_j (\theta_t \om_j)$. By \eqref{39}, we obtain
\begin{equation} \label{G1Y}
    \begin{split}
    \Gamma_1^Y (\ztw) &= C \max_{1 \leq j \leq m} \left\{\|h_j \|_{H^1(Y)}^2, \|h_j \|_{L^p (Y)}^p\right\} \sum_{j=1}^m \left(|z_j (\theta_t \om_j)|^2 + |z_j (\theta_t \om_j )|^p \right) \\
    &\leq C \max_{1 \leq j \leq m} \left\{\|h_j \|_{H^1(Y)}^2, \|h_j \|_{L^p (Y)}^p\right\} e^{\frac{\sigma}{2}|t|} r_0 (\om),
    \end{split}
\end{equation}
where $C > 0$ is a constant.

Substitute the expression of $Q_Y ((u_{0,m}, v_{0,m}))$ for $(u_{0,m}, v_{0,m}) \in B(\theta_{-t_m} \om)$ and \eqref{C6Y}, \eqref{G1Y} into the inequality \eqref{QYq}. Since \eqref{33}-\eqref{34} yield
$$
    \int_Y (F(x, u) + \phi_3 (x))\, dx \leq \frac{1}{C_2} \left[C_1 \|u\|_{L^p (Y)}^p + \|u\|_{L^2 (Y)}^2 + \|\phi_1 \|_{L^2 (Y)}^2 + \|\phi_2 \|_{L^1 (Y)} \right],
$$
for every $\om \in \gw$ and $B \in \msd$ and any $g_{0,m} = (u_{0,m}, v_{0,m}) \in B(\theta_{-t_m} \om)$, we get
\begin{equation} \label{upY}
    \begin{split}
    &\, C_3 \int_Y |u(0, \om, -t_m, u_{0,m})|^p\, dx \leq Q_Y (u(0, \om, -t_m, u_{0,m}), v(0, \om, -t_m, v_{0,m})) \\
    \leq &\, e^{-\sigma t_m} \left[\|v_{0,m}\|_{L^2 (Y)}^2 + (\alpha + \delta^2 - \beta \delta)\|u_{0,m}\|_{L^2 (Y)}^2 + \|\nabla u_{0,m}\|_{L^2 (Y)}^2\right]  \\
    &\, + e^{-\sigma t_m}  \frac{1}{C_2} \left[C_1\|u_{0,m}\|_{L^p (Y)}^p + \|u_{0,m}\|_{L^2 (Y)}^2 + \|\phi_1 \|_{L^2(Y)}^2 + \|\phi_2 \|_{L^1 (Y)} \right]   \\
    &\, + \int_{-t_m}^0 2 e^{\sigma t} C \max_{1 \leq j \leq m} \left\{\|h_j \|_{H^1(Y)}^2, \|h_j \|_{L^p (Y)}^p\right\} e^{-\frac{\sigma}{2} t} r_0 (\om)\, dt   \\
    &\, + (\varepsilon + 2\delta C_2) \int_{-t_m}^0 e^{\sigma t} \left(\|\phi_1 \|_{L^2 (Y)}^2 + \|\phi_3 \|_{L^1 (Y)}\right) dt + \int_{-t_m}^0 \frac{e^{\sigma t}}{\beta - \delta} \|g\|_{L^2 (Y)}^2\, dt.
    \end{split}
\end{equation}
Due to the absolute continuity of the respective Lebesgue integrals of the functions $\phi_1 (x), \phi_2 (x), \phi_3 (x), h_j (x), j = 1, \cdots, m$, and $g$ involved in the above inequality \eqref{upY}, for an arbitrarily given $\eta > 0$, there exists $\mu_0 = \mu_0 (\om, \eta) > 0$ such that for any measurable set $Y \subset \mbR$ with $\mu (Y) < \mu_0$ one has
\begin{equation} \label{eta1}
    \begin{split}
    &\, e^{-\sigma t_m} \frac{1}{C_2} \left(\|\phi_1 \|_{L^2(Y)}^2 + \|\phi_2 \|_{L^1 (Y)} \right)   \\
    &\, + \int_{-t_m}^0 2 e^{\sigma t} C \max_{1 \leq j \leq m} \left\{\|h_j \|_{H^1(Y)}^2, \|h_j \|_{L^p (Y)}^p\right\} e^{-\frac{\sigma}{2} t} r_0 (\om)\, dt   \\
    &\, + (\varepsilon + 2\delta C_2) \int_{-t_m}^0 e^{\sigma t} \left(\|\phi_1 \|_{L^2 (Y)}^2 + \|\phi_3 \|_{L^1 (Y)}\right) dt + \int_{-t_m}^0 \frac{e^{\sigma t}}{\beta - \delta} \|g\|_{L^2 (Y)}^2\, dt  \\
    \leq &\, \frac{1}{C_2} \left(\|\phi_1 \|_{L^2(Y)}^2 + \|\phi_2 \|_{L^1 (Y)} \right) + \frac{4C}{\sigma} r_0 (\om) \max_{1 \leq j \leq m} \left\{\|h_j \|_{H^1(Y)}^2, \|h_j \|_{L^p (Y)}^p\right\}  \\
    &\, + \frac{1}{\sigma} (\varepsilon + 2\delta C_2) \left(\|\phi_1 \|_{L^2 (Y)}^2 + \|\phi_3 \|_{L^1 (Y)}\right) + \frac{1}{\sigma (\beta - \delta)} \|g\|_{L^2 (Y)}^2 < \frac{\eta}{2}.
    \end{split}
\end{equation}
Moreover, since it has been specified in the beginning of Section 3.1 that the universe $\msd = \msd_E$ and here $B \in \msd$, there exists a constant $C^* > 0$ such that
\begin{align*}
    &\, e^{-\sigma t_m} \left[\|v_{0,m}\|_{L^2 (Y)}^2 + (\alpha + \delta^2 - \beta \delta)\|u_{0,m}\|_{L^2 (Y)}^2 + \|\nabla u_{0,m}\|_{L^2 (Y)}^2\right]  \\
    &\, + \frac{1}{C_2} e^{-\sigma t_m} \left[C_1\|u_{0,m}\|_{L^p (Y)}^p + \|u_{0,m}\|_{L^2 (Y)}^2\right]  \\
    \leq &\, e^{-\sigma t_m} C^* \left(\|B(\theta_{-t_m} \om)\|_{E(Y)}^2 + \|B(\theta_{-t_m} \om)\|_{E(Y)}^p\right),
\end{align*}
where $\|B(\theta_{-t_m} \om)\|_{E(Y)} = \max_{g_0 \in B(\theta_{-t_m} \om)} \|g_0\, \zeta_Y \|_E$ with $\zeta_Y$ being the characteristic function for the set $Y$. Since $\lim_{t \to \-\infty} e^{-\sigma t} \|B(\theta_{-t} \om)\|_E = 0$, for the aforementioned arbitrary $\eta > 0$ there exists an integer $m_0 = m_0 (B, \om, \eta) \geq 1$ such that
\begin{equation} \label{eta2}
    \begin{split}
    &\, e^{-\sigma t_m} C^* \left(\|B(\theta_{-t_m} \om)\|_{E(Y)}^2 + \|B(\theta_{-t_m} \om)\|_{E(Y)}^p\right) \\
    \leq&\, e^{-\sigma t_m} C^* \left(\|B(\theta_{-t_m} \om)\|_E^2 + \|B(\theta_{-t_m} \om)\|_E^p\right) < \frac{\eta}{2}, \quad \textup{for all} \;\, m > m_0.
    \end{split}
\end{equation}
Then there esists $\mu_1 = \mu_1 (B, \om, m_0, \eta) > 0$ such that for any set $Y$ with $\mu (Y) < \mu_1$ one has
\begin{equation} \label{eta3}
     e^{-\sigma t_{j}} C^* \left(\|B(\theta_{-t_{j}} \om)\|_{E(Y)}^2 + \|B(\theta_{-t_{j}} \om)\|_{E(Y)}^p \right) < \frac{\eta}{2}, \quad  j = 1, \cdots, m_0.
\end{equation}

Put together \eqref{eta1}, \eqref{eta2} and \eqref{eta3} with \eqref{upY}. It shows that, for every $\om \in \gw$, whenever a measurable set $Y \subset \mbR$ satisfies $\mu (Y) < \min \{\mu_0, \mu_1\}$ one has
\begin{equation} \label{upcv}
     C_3 \int_Y |u(0, \om, -t_m, u_{0,m})|^p\, dx \leq  \frac{\eta}{2} + \frac{\eta}{2} = \eta, \quad \textup{for all} \;\, m \geq 1.
\end{equation}
Therefore,
\begin{equation} \label{Pufu}
    \lim_{\mu (Y) \to 0} \int_Y |\mathbb{P}_u \Phi (t_{m_k}, \theta_{-t_{m_k}} \om, g_{0,m})(x)|^p\, dx = 0, \quad \textup{uniformly in} \;\, k  \geq 1,
\end{equation}
so that the condition (b) of Theorem \ref{TVLp} is satisfied by the sequence of functions $\{\mathbb{P}_u \Phi (t_{m_k}, \theta_{-t_{m_k}} \om, g_{0,m_k})(x)\}_{k=1}^\infty$ in $L^p (\mbR)$.

As checked by the above steps (2), (3) and (4) in this proof, all the conditions in Theorem \ref{TVLp} are satisfied by the sequence of functions $\{\mathbb{P}_u \Phi (t_{m_k}, \theta_{-t_{m_k}}\omega, g_{0, m_k}) (x)\}_{k=1}^\infty$ in $L^p (\mbR)$. Then we apply Theorem \ref{TVLp} to obtain
\begin{equation} \label{psp}
     \lim_{k \to \infty} \mathbb{P}_u  \Phi(t_{m_k}, \theta_{-t_{m_k}}\omega, g_{0, m_k}) = \tilde{u}(\om), \;\; \textup{strongly in} \;\, L^p (\mbR).
\end{equation}

Finally, the combination of \eqref{ph2} and \eqref{psp} shows that there exists a convergent subsequence $\{\Phi(t_{m_k}, \theta_{-t_{m_k}}\omega, g_{0, m_k})\}_{k=1}^\infty$ of the sequence $\{\Phi (t_m, \theta_{-t_m} \om, g_{0,m})\}_{m=1}^\infty$ in the space $E = (H^1 (\mbR) \cap L^p (\mbR))\times L^2 (\mbR)$.
Therefore, the random dynamical system $\Phi$ on $E$ is $\msd$-pullback asymptotically compact.

According to Theorem \ref{T1}, we conclude that there exists a $\msd$-pullback random attractor $\mathcal{A} = \{\mathcal{A}(\omega) \}_{\om\in\gw} \in \msd$ for this random dynamical system $\Phi$ on $E$ generated by the stochastic wave equation \eqref{1}. The proof is completed.
\end{proof}

\end{document}